\def\vint_#1{\mathchoice%
	{\mathop{\kern 0.2em\vrule width 0.6em height 0.69678ex depth -0.58065ex
			\kern -0.8em \intop}\nolimits_{\kern -0.4em#1}}%
	{\mathop{\kern 0.1em\vrule width 0.5em height 0.69678ex depth -0.60387ex
			\kern -0.6em \intop}\nolimits_{#1}}%
	{\mathop{\kern 0.1em\vrule width 0.5em height 0.69678ex
			depth -0.60387ex
			\kern -0.6em \intop}\nolimits_{#1}}%
	{\mathop{\kern 0.1em\vrule width 0.5em height 0.69678ex depth -0.60387ex
			\kern -0.6em \intop}\nolimits_{#1}}}
\def\vintslides_#1{\mathchoice%
	{\mathop{\kern 0.1em\vrule width 0.5em height 0.697ex depth -0.581ex
			\kern -0.6em \intop}\nolimits_{\kern -0.4em#1}}%
	{\mathop{\kern 0.1em\vrule width 0.3em height 0.697ex depth -0.604ex
			\kern -0.4em \intop}\nolimits_{#1}}%
	{\mathop{\kern 0.1em\vrule width 0.3em height 0.697ex depth -0.604ex
			\kern -0.4em \intop}\nolimits_{#1}}%
	{\mathop{\kern 0.1em\vrule width 0.3em height 0.697ex depth -0.604ex
			\kern -0.4em \intop}\nolimits_{#1}}}
\def\csname ver@etex.sty\endcsname{3000/12/31}
\newcommand{\R}{\mathbb{R}}
\newcommand{\N}{\mathbb{N}}
\newcommand{\m}{\mathfrak{m}}
\newcommand{\n}{\mathfrak{n}}
\renewcommand{\d}{\,\mathrm{d}}
\renewcommand{\liminf}{\varliminf}
\renewcommand{\limsup}{\varlimsup}
\newcommand{\Ric}{\mathrm{Ric}}
\newcommand{\loc}{\mathrm{loc}}
\newcommand{\lip}{\mathrm{lip}}
\newcommand{\cd}{\mathrm{CD}}
\newcommand{\rcd}{\mathrm{RCD}}
\newcommand{\ent}{\mathrm{Ent}}
\newcommand{\ch}{\mathrm{Ch}}
\newcommand{\p}{\mathcal{P}}
\newcommand{\spt}{\mathrm{spt}\,}
\newcommand{\Lip}{\mathrm{Lip}}
\newcommand{\Lipb}{\mathrm{Lip}_{\mathrm{b}}}
\numberwithin{equation}{section}
\newtheorem{theorem}{Theorem}[section]
\newtheorem{proposition}[theorem]{Proposition}
\newtheorem{lemma}[theorem]{Lemma}
\newtheorem{corollary}[theorem]{Corollary}
\theoremstyle{definition}
\newtheorem{definition}[theorem]{Definition}
\newtheorem{assumption}[theorem]{Assumption}
\theoremstyle{remark}
\newtheorem{remark}[theorem]{Remark}
\newcommand{\aint}[2][]{
	\ifthenelse{\equal{#1}{}}%
	{%
		\mathchoice%
		{\mathop{\kern 0.2em\vrule width 0.6em height 0.69678ex depth -0.58065ex
				\kern -0.8em \intop}\nolimits_{\kern -0.45em#2}^{#1}}%
		{\mathop{\kern 0.1em\vrule width 0.5em height 0.69678ex depth -0.60387ex
				\kern -0.6em \intop}\nolimits_{#2}^{#1}}%
		{\mathop{\kern 0.1em\vrule width 0.5em height 0.69678ex depth -0.60387ex
				\kern -0.6em \intop}\nolimits_{#2}^{#1}}%
		{\mathop{\kern 0.1em\vrule width 0.5em height 0.69678ex depth -0.60387ex
				\kern -0.6em \intop}\nolimits_{#2}^{#1}}}%
	{%
		\mathchoice%
		{\mathop{\kern 0.2em\vrule width 0.6em height 0.69678ex depth -0.58065ex
				\kern -0.8em \intop}\nolimits_{\kern -0.45em#1}^{#2}}%
		{\mathop{\kern 0.1em\vrule width 0.5em height 0.69678ex depth -0.60387ex
				\kern -0.6em \intop}\nolimits_{#1}^{#2}}%
		{\mathop{\kern 0.1em\vrule width 0.5em height 0.69678ex depth -0.60387ex
				\kern -0.6em \intop}\nolimits_{#1}^{#2}}%
		{\mathop{\kern 0.1em\vrule width 0.5em height 0.69678ex depth -0.60387ex
				\kern -0.6em \intop}\nolimits_{#1}^{#2}}}} 
\begin{document}
	
	\parindent=0in
	\title[On the rigidity of Wasserstein contraction along heat flows]
	{On the rigidity of Wasserstein contraction along heat flows}

	\author{Zhenhao Li}

	\address{Faculty of Mathematics \\
		Bielefeld University \\
		Postfach 10 01 31 \\
		33501 Bielefeld \\
		Germany.}
	\email{zhenhao.li@math.uni-bielefeld.de}
	\begin{abstract}
	We establish an equivalence between the rigidity of Wasserstein contraction along heat flows and the rigidity of Bakry--Émery gradient estimates for Lipschitz functions.  
	Applying results of Ambrosio--Brué--Semola and Han, we show that if an $\rcd$ space with Ricci lower bound $K\in[0,\infty)$ admits two distinct points $x,y$ such that the $2$-Wasserstein distance between the associated heat kernels satisfies
	\[
	W_2(p_t(x,\cdot), p_t(y,\cdot)) = e^{-Kt} d(x,y),
	\]
	then the space splits off a line.
	
	Moreover, for weighted smooth manifolds, we provide a direct proof of the rigidity theorem for all curvature bounds $K \in \mathbb{R}$.
	In particular, we characterize a class of weighted Euclidean spaces as the only spaces where the Wasserstein contraction is sharp for all pairs of points.
	\end{abstract}
\maketitle
\section{Introduction}
Since a synthetic Ricci curvature lower bound for metric measure spaces (mms for short) was introduced by Sturm in \cite{sturm2006--1,Sturm06-2}, and Lott and Villani in \cite{LottVillani09}, various equivalent formulations have been proposed and extensively studied.
Among these, one key characterization expresses the Ricci curvature lower bound via an (exponential) contraction property of the heat flow w.r.t. the Wasserstein distance.
More precisely, a mms (under suitable assumptions) has Ricci curvature bounded below by $K$ if for all $x,y$ and all $t>0$, the $2$-Wasserstein distance between heat kernels $p_t(x,\cdot)$ and $p_t(y,\cdot)$ satisfies
\begin{equation}\label{eq:WassersteinK}
	W_2(p_t(x,\cdot),p_t(y,\cdot))\leq e^{-Kt}d(x,y).
\end{equation}
This estimate is sharp, for instance, in Euclidean space where the Wasserstein distance between two heat flows starting from Dirac measures remains constant in time.

A natural question is: for which spaces with Ricci lower bound by $K$, do there exist points \( x \neq y \) and time \( t > 0 \) such that equality holds in \eqref{eq:WassersteinK}?

In this work, we answer this question by establishing the following rigidity results:
\begin{theorem}\label{thm:1.1}
	Let $(X,d,\m)$ be one of the following
	\begin{enumerate}
		\item an $\rcd(K,N)$ mms for $K\geq 0$, $N\in[1,\infty]$ with either $K>0$ or $N<\infty$;
		\item an $\rcd(K,\infty)$ weighted manifold with $K\in\R$ where the heat flow is ultracontractive.
	\end{enumerate}
	If equality in \eqref{eq:WassersteinK} is attained by a pair of distinct points $x,y\in X$, then the space splits off a line; that is, there exists a mms $(Y,d_Y,\m_Y)$, either satisfying $\rcd(K,\infty)$ or being a singleton, s.t. $(X,d,\m)$ is isomorphic as a mms to the product space
	\begin{equation}
		(\R^1,|\cdot|,e^{-f}\mathcal{L}^1)\times (Y,d_Y,\m_Y),\quad \text{where $f''\equiv K$ on $\R^1$. }
	\end{equation}
\end{theorem}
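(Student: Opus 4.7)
My plan is to reduce the theorem to a rigidity statement for the Bakry--\'Emery gradient estimate via Kuwada's duality, and then invoke the splitting theorems of Han and Ambrosio--Bru\'e--Semola already quoted in the abstract. Kuwada's duality establishes the equivalence between the $W_2$-contraction $W_2(P_s\mu, P_s\nu) \leq e^{-Ks} W_2(\mu,\nu)$ and the $L^2$ Bakry--\'Emery gradient estimate $|\nabla P_s f|_w^2 \leq e^{-2Ks} P_s(|\nabla f|_w^2)$; the ingredient I need to upgrade is an equality-case version of that duality.

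\emph{Step 1: from Wasserstein rigidity to gradient-estimate rigidity.} Starting from the hypothesis $W_2(p_t(x,\cdot), p_t(y,\cdot)) = e^{-Kt} d(x,y)$, applying the contraction to a metric midpoint of $\{x,y\}$ and to a $W_2$-midpoint of the heat kernels forces equality along an entire geodesic from $x$ to $y$. Kantorovich $W_1$-duality then produces a $1$-Lipschitz potential $f$ with $|P_t f(x) - P_t f(y)| = e^{-Kt} d(x,y)$. Bounding the left-hand side by integrating $|\nabla P_t f|_w$ along a minimising geodesic and applying Cauchy--Schwarz together with the Bakry--\'Emery estimate yields a chain of inequalities whose endpoints coincide; saturation forces $|\nabla P_t f|_w^2(z) = e^{-2Kt} P_t(|\nabla f|_w^2)(z) = e^{-2Kt}$ pointwise along that geodesic.

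\emph{Step 2: splitting.} Han's rigidity theorem asserts that any nontrivial Lipschitz saturator of the $\mathrm{BE}(K,\infty)$ gradient estimate on an $\rcd(K,\infty)$ space forces a splitting $X \cong \R \times Y$, and Ambrosio--Bru\'e--Semola supply the analogous statement in the finite-dimensional setting when $K \geq 0$; applied to the function from Step~1, these yield the product decomposition claimed in the theorem. The precise form of the weight on the $\R$-factor is then pinned down by the fact that the one-dimensional $\rcd(K,\infty)$ condition with a saturated gradient estimate forces $f_0'' \equiv K$. For the smooth case (ii) I would replace Han and Ambrosio--Bru\'e--Semola by a direct Bochner-type argument: ultracontractivity supplies the smoothness of $P_t f$ required for the $\Gamma_2$-calculus, and pointwise equality in Bochner's identity forces $\Hess P_t f \equiv 0$ on a neighbourhood together with $\Ric_f(\nabla P_t f, \nabla P_t f) = K |\nabla P_t f|^2$ along the integral curves of $\nabla P_t f$, from which the splitting and the weight $e^{-f_0}$ with $f_0'' \equiv K$ can be read off explicitly.

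The main obstacle is Step~1. Kuwada's classical argument is one-sided: it extracts the $W_2$-contraction from the gradient estimate by integrating against an optimal Kantorovich plan. Upgrading it to a pointwise equality-case statement requires carefully propagating sharpness through Cauchy--Schwarz and through Lisini's representation of $W_2$-absolutely continuous curves as superposition measures on path space. Once that rigidity is established at the level of the gradient estimate, the splitting conclusion in both cases is an essentially off-the-shelf application of the Han and Ambrosio--Bru\'e--Semola results, respectively of Bochner's identity in the smooth category.
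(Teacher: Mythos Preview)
Your overall architecture matches the paper's: pass from Wasserstein rigidity to gradient-estimate rigidity via a Kuwada-type equality case, then invoke Han and Ambrosio--Bru\'e--Semola for the splitting. But Step~1 has a real gap. You invoke $W_1$-Kantorovich duality to produce a $1$-Lipschitz $f$ with $|P_tf(x)-P_tf(y)|=e^{-Kt}d(x,y)$; that would require $W_1(p_t(x,\cdot),p_t(y,\cdot))=e^{-Kt}d(x,y)$, whereas the hypothesis only gives $W_2$-sharpness, and H\"older runs the wrong way ($W_1\le W_2$, so sharp $W_2$ does not directly give sharp $W_1$). The paper in fact establishes the equivalence of all $(SC^K_p)$ only \emph{after} proving the duality theorem, and only under a finite-dimensionality assumption; it is not available to you at this point.

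The paper's repair is to work directly with the $W_2$-Kantorovich potential $\phi$ and prove that $\phi$ is globally Lipschitz. This uses two ingredients absent from your sketch: the slope estimate $|\nabla\phi|(z)\le d(z,w)$ for $(z,w)$ in the support of the optimal plan (from Ambrosio--Gigli--Savar\'e), and Savar\'e's $W_\infty$-contraction, which forces the $W_2$-optimal plan (unique, by Rajala--Sturm) to coincide with a coupling supported in $\{d(z,w)\le e^{-Kt}d(x,y)\}$. With $\phi\in\Lip(X)$ in hand, the paper then runs the Hopf--Lax/Hamilton--Jacobi side of Kuwada's argument (not the $W_1$ side): writing $\frac1p W_p^p=\int_0^1\partial_a(P_tQ_a\phi)(\gamma^a)\,da$ and tracking the chain of inequalities, equality in the $q$-gradient estimate for $Q_a\phi$ is forced for a.e.\ $a$. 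Your $W_1$-based chain would indeed work \emph{once} you have a $1$-Lipschitz saturator, but producing one is precisely the missing step.

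For case~(ii) your Bochner sketch is in the right spirit, but the paper's implementation is different and more delicate than ``ultracontractivity gives smoothness, then read off $\Hess=0$''. It computes the second derivative of the entropy along the Wasserstein geodesic $a\mapsto p_t(\gamma^a,\cdot)$, shows it is identically $Ke^{-2Kt}d(x,y)^2$, and then localises via an exhaustion on the \emph{path space} of the optimal dynamical plan (not on $M$) to justify the integrations by parts in the Bochner identity; ultracontractivity enters to guarantee $|\nabla_x p_t(x,\cdot)|\in L^1(\m)$, not regularity of $P_tf$. From this one reads off $\nabla^2\phi^a\equiv0$ and $\Ric_\m(\nabla\phi^a)=K|\nabla\phi^a|^2$, and the weighted splitting with $f_1''\equiv K$ follows.
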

The central new ingredient in the proof of the first part of \cref{thm:1.1} is an equivalence between the attainment of equality in Wasserstein contraction and the attainment of equality in the dual gradient estimates for Lipschitz functions.
This allows us to leverage results by Ambrosio--Bru\'e--Semola \cite{ABS19Gafa} and Han \cite{Han-JMPA21} to conclude the splitting of the space.

For the second part, we give a direct proof of the rigidity result that also covers $K<0$, by analysing the behaviour of the relative entropy along Wasserstein geodesics.
Then further combining with a dimensional refinement of displacement convexity from \cite{EKS15}, we obtain an additional rigidity on the curvature bound.
\begin{theorem}
Let $(X,d,\m)$ be a finite-dimensional $\rcd(K,\infty)$ mms for $K\in\R$ s.t.  either $K\geq 0 $ or $(X,d,\m)$ satisfies $\rcd(K,N)$ for some $N<\infty$.
If equality in \eqref{eq:WassersteinK} is attained by a pair of distinct points $x,y\in X$, then $K=0$ and the space splits off a line.
\end{theorem}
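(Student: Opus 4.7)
The plan is to combine the splitting conclusion of Theorem~\ref{thm:1.1} with a dimensional obstruction coming from the Erbar--Kuwada--Sturm refinement of displacement convexity. First I would produce the splitting, then read off $K=0$ from the one-dimensional curvature-dimension condition on the line factor.

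\emph{Splitting.} When $K\geq 0$, the auxiliary hypothesis (either $K>0$, or $\rcd(K,N)$ for some $N<\infty$) is exactly what is needed to invoke Theorem~\ref{thm:1.1}(1), yielding the isomorphism
\[
(X,d,\m)\;\cong\;(\R,|\cdot|,e^{-f}\mathcal{L}^1)\times (Y,d_Y,\m_Y),\qquad f''\equiv K \text{ on }\R.
\]
When $K<0$, the hypothesis provides $\rcd(K,N)$ for some $N<\infty$, but Theorem~\ref{thm:1.1}(1) does not cover this regime. In that branch I would adapt to the nonsmooth setting the entropy-along-Wasserstein-geodesics argument from the weighted-manifold proof of Theorem~\ref{thm:1.1}(2), using the $K$-convexity of the relative entropy and the existence of optimal transport maps in $\rcd(K,N)$ to produce the same product structure.

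\emph{Dimensional rigidity.} The finite-dimensionality assumption provides that $(X,d,\m)$ satisfies $\rcd(K',N')$ for some $K'\in\R$ and some $N'<\infty$. By tensorization of the curvature-dimension condition, the one-dimensional factor $(\R,|\cdot|,e^{-f}\mathcal{L}^1)$ must itself satisfy $\rcd(K'',N'')$ for some finite $N''$, which on $\R$ reduces to the pointwise inequality
\[
f''-\frac{(f')^2}{N''-1}\;\geq\;K''.
\]
Substituting $f''\equiv K$ gives $(f')^2\leq (K-K'')(N''-1)$, so $f'$ must be bounded. But $f''\equiv K$ forces $f'(x)=Kx+c$, which is unbounded unless $K=0$. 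Hence $K=0$, and the theorem follows by the splitting.

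\emph{Main obstacle.} The principal difficulty is the splitting step for $K<0$, since the rigidity theorems of Ambrosio--Bru\'e--Semola and Han feeding into Theorem~\ref{thm:1.1}(1) require $K\geq 0$. Translating the weighted-manifold entropy argument into the $\rcd$ language calls for care with the regularity of Wasserstein geodesics, the $a.e.$ behaviour of the Kantorovich potentials, and the equality case of $K$-convexity along displacement interpolations, but it does not rely on the linear structure of the manifold setting. Once the splitting is in hand, the dimensional rigidity is a short one-variable computation.
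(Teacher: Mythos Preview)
Your order of operations is inverted relative to the paper, and this is what creates the gap you flag in the $K<0$ branch.

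For the $\rcd(K,N)$ case (where your $K<0$ branch lives), the paper does \emph{not} split first. It runs the entropy-along-Wasserstein-geodesics argument directly on $X$: along the $W_2$-geodesic $a\mapsto p_s(\gamma^a,\cdot)$ the entropy is shown to be exactly $K$-quadratic, i.e.\ $E''(a)=KW_2^2(\mu^0,\mu^1)$ (this is the Proposition on quadratic entropy, valid on any finite-dimensional $\rcd(K,\infty)$ space). Combining this with the $(K,N)$-convexity from \cite{EKS15},
\[
E''(a)\;\geq\; KW_2^2(\mu^0,\mu^1)+\frac{1}{N}\,E'(a)^2,
\]
forces $E'(a)\equiv 0$, hence $K=0$. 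Only then does the splitting follow, via Ambrosio--Bru\'e--Semola for $\rcd(0,N)$. No $K<0$ splitting theorem is needed because $K=0$ is secured first.

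Your route---split first, then read off $K=0$ from the line factor---is exactly what the paper does for the $K>0$ finite-dimensional branch (splitting via Han), and your pointwise argument that $f''-(f')^2/(N''-1)\geq K''$ is incompatible with $f'(x)=Kx+c$ being unbounded is a clean alternative to the paper's version, which instead tracks the entropy along the full line $a\in\R$ and observes $|E'(a)|\to\infty$. One terminological point: what you invoke as ``tensorization'' is really factor inheritance (the converse direction); it does hold here, e.g.\ via the Bakry--\'Emery inequality applied to functions of the $\R$-coordinate only, but it is not the usual tensorization statement.

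The obstruction in your plan is that for $K<0$ you cannot split first: Gigli's splitting requires $K=0$, and there is no non-smooth analogue of the Hessian-vanishing splitting for negative curvature. The fix is to take the entropy computation you intended to adapt and feed it into the $(K,N)$-convexity inequality on $X$ itself, concluding $K=0$ \emph{before} any splitting is attempted.
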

\subsection{Duality on Wasserstein contractions and gradient estimates}
The notion of $\rcd(K,\infty)$ mms was introduced in \cite{AGSRCD2014,Gigli15MAMS}. 
On an $\rcd$ space $(X,d,\m)$, the heat flow, viewed as the $L^2$-gradient flow of Cheeger's energy, generates a linear semigroup $(P_t)_{t\geq 0}$ on $L^2(X,\m)$.
On the other hand, for any $x\in X$ it also induces a Wasserstein gradient flow of probability measures $(p_t(x,\d\m))_{t>0}$ for the relative entropy, starting at the Dirac measure $\delta_x$.

For each pair of H\"older exponents $p,q\in[1,\infty]$, the following two properties are equivalent:
\begin{itemize}
\item \textbf{$p$-Wasserstein contraction}: for all $x,y\in X$ and $t>0$, it holds that
\begin{equation}\label{introineq:C_p}
	W_p(p_t(x,\d\m),p_t(y,\d\m))\leq e^{-Kt}d(x,y);
\end{equation}
\item \textbf{$q$-Bakry--Émery gradient estimate}: for every bounded Lipschitz function $f$, the following holds $\m$-a.e.
\begin{align}
		|\nabla P_t f| &\leq  e^{-Kt}\big(P_t(|\nabla f|^q)\big)^{1/q}, && q\in[1,\infty)\label{introineq:G_q}\\
			|\nabla P_t f|& \leq e^{-Kt}\|\nabla f\|_{L^\infty},&& q=\infty\label{introineq:G_infty}.
\end{align}
\end{itemize}
This duality was established by Kuwada \cite{Kuwadaduality} for the local Lipschitz constant, later by Ambrosio, Gigli and Savar\'e in \cite{AGSRCD2014,AGS15} for the minimal relaxed gradient and $p=2$, and finally extended to all $p\in[1,\infty]$ by Savar\'e in \cite{Savare14}.

The rigidity problem of the gradient estimates was first addressed by Ambrosio, Bru\'e and Semola \cite{ABS19Gafa}.
Using the theory of regular Lagrangian flows and Gigli’s splitting theorem \cite{Giglisplitting13,Gigli14splitting}, they showed that an $\rcd(0,N)$ space must split off a line if there exists a non-constant bounded Lipschitz function attaining equality in the $1$-gradient estimate.
This strategy was later adapted by Han in \cite{Han-JMPA21} to treat the case of $\rcd(K,\infty)$ spaces with $K>0$.

One of the main contributions of this work is to establish an equivalence between the attainment of equality the gradient estimates and in the Wasserstein contractions.
\begin{theorem}
	Let $(X,d,\m)$ be a proper $\rcd(K,\infty)$ mms for $K\in\R$. Then 
	\begin{enumerate}
		\item if there exist $x \neq y \in X$ such that equality in \eqref{introineq:C_p} holds for $p = 2$, equality in \eqref{introineq:G_infty} is also achieved by some non-constant Lipschitz function;
		\item conversely, if there exists a non-constant Lipschitz function for which \eqref{introineq:G_infty} holds with equality, then for all $p \in [1,\infty)$, one can find $x \neq y \in X$ such that equality is attained in \eqref{introineq:C_p}, provided the heat flow is ultracontractive.
	\end{enumerate}
Moreover, on a finite-dimensional $\operatorname{RCD}(K,\infty)$ space, the sharp $p$-Wasserstein contraction is attained if and only if the sharp $q$-gradient estimate is attained, for all $p \in [1,\infty)$ and $q \in [1,\infty]$.
\end{theorem}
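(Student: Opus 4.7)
The overall plan is to extract from either hypothesis the splitting $X \simeq (\R, e^{-h}\mathcal{L}^1)\times (Y,d_Y,\m_Y)$ with $h''\equiv K$, via the sharp $q=1$ Bakry--\'Emery rigidity of \cite{ABS19Gafa, Han-JMPA21}. Once the splitting is at hand, the coordinate function $\pi_\R$ realizes $|\nabla P_t \pi_\R|\equiv e^{-Kt}\|\nabla \pi_\R\|_\infty$ (sharp \eqref{introineq:G_infty}) by an explicit Ornstein--Uhlenbeck computation on the weighted line, and any pair $x=(a,z),\, y=(b,z)$ with $a\neq b$ yields $W_p(p_t(x,\cdot),p_t(y,\cdot)) = e^{-Kt}|a-b|$ for every $p\in[1,\infty)$ (sharp \eqref{introineq:C_p}); both conclusions then follow.

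For (1), the first step is to propagate the $W_2$ equality upward: the monotonicity $W_2\leq W_\infty$ (Jensen on transport plans) together with the $W_\infty$-contraction on $\rcd$ spaces (Savar\'e \cite{Savare14}) sandwich $W_\infty(p_t(x,\cdot),p_t(y,\cdot))=e^{-Kt}d(x,y)$. Any $W_\infty$-optimal coupling $\pi$ then satisfies $d\leq e^{-Kt}d(x,y)$ $\pi$-a.s., and substituting into $W_2^2 = e^{-2Kt}d(x,y)^2 \leq \int d^2\,d\pi$ together with the upper bound forces the rigidity $d\equiv e^{-Kt}d(x,y)$ $\pi$-a.s. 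By the Brenier--McCann--Gigli structure of $W_2$ optimal transport, this rigid coupling is induced by a Kantorovich potential $\phi$ with $|\nabla\phi|\equiv e^{-Kt}d(x,y)$ on $X$ (using the full support of the heat kernel on a connected $\rcd$ space), i.e.\ $\phi$ is eikonal. The decisive ingredient is then Kuwada's duality in its equality form: under $W_2 = e^{-Kt}d(x,y)$ each inequality in Kuwada's derivation of the $W_2$-contraction from the $q=2$ gradient estimate (Hopf--Lax differential, $q=2$ estimate on $P_s\phi$, integration in $s\in[0,t]$) collapses to equality at some point, giving $q=2$ sharpness for $\phi$; combined with the constancy of $|\nabla\phi|$ this promotes to $q=1$ sharpness, and \cite{ABS19Gafa, Han-JMPA21} then delivers the splitting.

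For (2), assume $|\nabla P_t f|(z_0)=e^{-Kt}\|\nabla f\|_\infty$ with $\|\nabla f\|_\infty=1$. The $q=1$ estimate yields
\[
1 = e^{Kt}|\nabla P_t f|(z_0) \leq P_t|\nabla f|(z_0) \leq \|\nabla f\|_\infty = 1,
\]
so $P_t|\nabla f|(z_0)=1$; ultracontractivity and strict positivity of the heat kernel force $|\nabla f|\equiv 1$ $\m$-a.e., and the chain becomes equality at $z_0$, which is precisely the $q=1$ rigidity hypothesis of \cite{ABS19Gafa, Han-JMPA21}. Splitting follows, whence $W_p$-equality on some pair for every $p\in[1,\infty)$ via the split heat-kernel computation above. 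The finite-dimensional ``moreover'' follows by the same template: $W_p$-sharpness at $(x,y)$ for any $p\in[1,\infty)$ propagates to $W_\infty$-sharpness via $W_p\leq W_\infty\leq e^{-Kt}d(x,y)$, and the argument of (1), now starting from the equality $\int d^p\,d\pi=(e^{-Kt}d(x,y))^p$ with $d\leq e^{-Kt}d(x,y)$ $\pi$-a.s., still forces $d\equiv e^{-Kt}d(x,y)$ $\pi$-a.s.\ and hence the splitting. The main obstacle throughout is the sharp Kuwada step in (1) -- tracking each inequality in Kuwada's chain back to equality at a single point, with the Kantorovich potential as test function -- while the remaining arguments are sandwich inequalities, explicit Ornstein--Uhlenbeck computations, and direct invocations of \cite{ABS19Gafa, Han-JMPA21}.
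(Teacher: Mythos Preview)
Your argument for part (1) is essentially correct and aligns with the paper's own proof: both use Savar\'e's $W_\infty$-contraction to show the optimal coupling satisfies $d\equiv e^{-Kt}d(x,y)$ almost surely, deduce that the Kantorovich potential $\phi$ is Lipschitz with $|\nabla\phi|\leq e^{-Kt}d(x,y)$, and then run Kuwada's chain of inequalities backwards to extract equality in a gradient estimate. Note, however, that once you have obtained $q$-sharpness for $\phi$ with $|\nabla\phi|$ constant, you have already proved (1) directly --- the detour through the splitting theorem is superfluous here.

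For part (2) and the ``moreover'' clause, your approach has a genuine gap. You reduce to $(SG^K_1)$ and then invoke the splitting results of \cite{ABS19Gafa,Han-JMPA21}. But \cite{ABS19Gafa} requires $\rcd(0,N)$ with $N<\infty$, and \cite{Han-JMPA21} requires $K>0$; neither covers the case $K<0$ (nor $K=0$ with $N=\infty$), whereas the theorem is stated for arbitrary $K\in\R$. Spaces with $K<0$ attaining sharp contraction do exist (e.g.\ weighted Euclidean lines with $f''\equiv K<0$), so the statement is not vacuous there. The paper avoids this obstruction by proving $(SG^K_\infty)\Rightarrow(SC^K_1)$ \emph{directly}, without passing through splitting: given $f$ with $|\nabla P_tf|\equiv e^{-Kt}$, one constructs a gradient flow curve $\gamma$ of $P_tf$ (via regular Lagrangian flows for suitable truncations, using ultracontractivity to ensure $P_tf_n$ is a test function) and then verifies
\[
W_1(p_t(\gamma^0,\d\m),p_t(\gamma^1,\d\m))\geq P_tf(\gamma^0)-P_tf(\gamma^1)=\int_0^1|\nabla P_tf|^2(\gamma^\tau)\d\tau\geq e^{-Kt}d(\gamma^0,\gamma^1)
\]
by Kantorovich duality. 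This works for all $K\in\R$. Your strategy also inverts the paper's logical flow: the duality theorem is precisely the tool the paper uses to \emph{derive} splitting from \cite{ABS19Gafa,Han-JMPA21}, not a consequence of it.
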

We refer to \cref{sec:dualitythm} for a more detailed statement.
\smallskip

The strategy of the proof is as follows. 
Assuming that equality in \eqref{introineq:C_p} is attained, we first show that the Kantorovich potential for the optimal transport between the given heat kernels is Lipschitz—this follows from the slope estimate for Kantorovich potentials in \cite{AGSInvention} and Savar\'e’s $\infty$-Wasserstein contraction result in \cite{Savare14}. 
Then, adapting Kuwada’s argument in his duality theorem, we prove that equality in the Wasserstein contraction implies equality in \eqref{introineq:G_infty} for the Kantorovich potential.

For the reverse implication, we consider a Lipschitz function that attains equality in the gradient estimate. 
Along its gradient flow curve, we show that equality in \eqref{introineq:C_p} for $p=1$ is attained, using this function as a candidate Kantorovich potential. 
The rigorous construction of gradient flow curves in the non-smooth setting relies on the theory of regular Lagrangian flows, along with a suitable truncation argument.
\smallskip

We would like to point out one technical issue here, that is not present in \cite{ABS19Gafa}: functions considered in our setting (e.g. the ones attaining equality in \eqref{introineq:G_infty} or Kantorovich potentials for optimal transports) may not be bounded or integrable.
Therefore, it is necessary to extend the gradient estimates to all (possibly unbounded) Lipschitz functions, which will be discussed in detail in \cref{sec:gradientestimate}.

Moreover, it seems unnatural to impose a finite-dimensionality assumption on the underlying space.
In fact, the author conjectures unless $K=0$ that no finite dimensional $\rcd(K,\infty)$ space attains sharp Wasserstein contraction.
Thus additional technical challenges arise from the absence of the doubling property of the reference measure and the lack of Gaussian heat kernel bounds.
These are overcome with the help of the exponential integrability of heat kernels on $\rcd(K,\infty)$ spaces shown by Tamanini \cite{Tamanini-19HK} and several dimension-free functional inequalities; see \cref{sec:heatkernel} for more details.
\subsection{On weighted smooth manifolds}
Given a smooth Riemannian manifold \( (M, g) \) equipped with the weighted volume measure \( \m\coloneqq e^{-f} \mathrm{vol}_g \) where $f$ is a smooth function, the associated weighted Ricci curvature (also known as the Bakry--Émery curvature) is defined as
\begin{equation}
	\Ric_\m \coloneqq \Ric_g + \nabla^2 f.
\end{equation}
An illustrative example is the weighted Euclidean line
\begin{equation}\label{eq:introR1k}
 (\R^1,|\cdot|,e^{-f }\mathcal{L}^1), \quad \text{with }f''\equiv K
\end{equation}
which satisfies $\rcd(K,\infty)$.
One can check that in this setting the Lipschitz function $\varphi(x)\equiv x$ attains the sharp $\infty$-gradient estimate.
In particular, equality in \eqref{introineq:C_p} is achieved for every pair of points.

In \cref{sec:proofsharpness}, we show that a weighted smooth manifold $(M,g,\m)$ with $\Ric_\m\geq K$ attains sharp Wasserstein contraction at two distinct points if and only if it splits with a one-dimensional factor given as \eqref{eq:introR1k}; see \cref{thm:sharpness}.
Moreover, iterating the splitting we characterize the Euclidean spaces $\R^n$ with weight function $f$ satisfying $\nabla^2 f\equiv K\mathrm{id}$ as the only spaces in which the sharp Wasserstein contraction is attained between every pair of points; see \cref{thm:rigidity}.

The proof follows the spirit of Otto's calculus on the Wasserstein space.
Assuming the equality in \eqref{introineq:C_p} holds at $x\neq y\in M$ for $p=2$, we consider the geodesic $(\gamma^a)_{a\in[0,1]}$ joining them.
We show that the curve of heat flows $p_t(\gamma^a,\d\m)$ is a geodesic in the $2$-Wasserstein space, along which the relative entropy is a quadratic function of $a$, hence has constant second derivative.

We show that one can pass the second derivative of entropy along the Wasserstein geodesic to the Kantorovich potential, and deduce that it must have vanishing Hessian. 
This, in turn, implies that the underlying space splits.
\medskip

\noindent \textbf{Acknowledgement.}
The author is grateful for the comments of an anonymous referee for drawing his attention to the work of Ambrosio, Brué, and Semola \cite{ABS19Gafa}.
He also thanks Matthias Erbar for valuable discussions and Timo Schultz for reading an earlier draft of the paper.
\section{Preliminaries}\label{sec:prep}
\subsection*{Notations and conventions}
A metric measure space $(X,d,\m)$ consists of a Polish metric space $(X,d)$ and a locally finite Borel measure $\m$ with \textbf{full support}.
A triple $(M,g,\m)$ always stands for a weighted complete smooth Riemannian manifold with the reference measure $\m\coloneqq e^{-f}\mathrm{vol}_g$ for some smooth function $f\colon M\to\R$.

The notation $A(\cdot)\lesssim_{a,b}B(\cdot)$ means that there exists a constant $C$ depending only on parameters $a,b$ such that $A(\cdot)\leq C\cdot B(\cdot)$.
\subsection{Wasserstein geodesics and Kantorovich potentials}
We assume that the reader is familiar with basic notions on optimal transport.
Otherwise, we refer to \cite{AmbrosioGigli2013} for an approachable exposition.

Let $(X,d,\m)$ be a metric measure space. 
For $p\in(1,\infty)$, a $p$-\emph{Wasserstein geodesic} always stands for a constant speed geodesic on the space $\p_p(X)$ of all probability measures with finite $p$-th moment w.r.t. the Wasserstein distance $W_p$.
Given $\mu,\nu\in \p_p(X)$, a function $\phi$ is called a \emph{Kantorovich potential} from $\mu$ to $\nu$ (w.r.t. $W_p$)\footnote{We often omit explicit mention of the exponent when there is no risk of ambiguity. This applies also to terms such as Wasserstein geodesic or Hopf--Lax semigroup.} if 
\begin{equation}\label{eq:Kduality2}
\frac1pW_p^p(\mu,\nu)=\int \phi\d\mu+\int \phi^c\d \nu,\quad \phi^c(y)\coloneqq \inf_{x\in X}\left(\frac1pd^p(x,y)-\phi(x)\right).
\end{equation}
Moreover, set $L(s)\coloneqq p^{-1}s^p$ and its Legendre conjugate $L^*$ as $L^*(s)\coloneqq q^{-1}s^q$ with $q$ the H\"older dual of $p$.
Define the Hopf--Lax semigroup of the Kantorovich potential $\phi$ by 
\begin{equation}\label{eq:HLsemigroup1}
	Q_a \phi(x)\coloneqq \inf_{y\in X}\left[  \phi(y)+aL\left(\frac{d(x,y)}{a}\right) \right].
\end{equation}
We can formulate the conjugate $f^c$ by $Q_1(-f)$ and define the \emph{intermediate-time Kantorovich potentials} for all $a\in[0,1)$ by 
	\begin{equation}\label{eq:IntKantorovich}
	\phi^a(x)\coloneqq Q_{1-a}(-\phi^c)(x)= \inf_{y\in X}\left( \frac{d^p(x,y)}{p(1-a)^{p-1}}-\phi^c(y)\right),
\end{equation}
and $\phi^1\coloneqq -\phi^c$. 

We recall the following interpolation of Kantorovich potentials at intermediate times along Wasserstein geodesics. The proof can be found in \cite{AmbrosioGigli2013} for $p=2$, \cite{Cavalletti-McCann21} for general $p\in(1,\infty)$, and see also \cite[Chapter 7]{Villani}.
\begin{lemma}\label{lemma:interPotential}
	Let $(\mu^a)_{a\in[0,1]}$ be a $p$-Wasserstein geodesic and let $(\phi)_{a\in[0,1]}$ be an associated family of intermediate-time Kantorovich potentials given by \eqref{eq:IntKantorovich}.
	Then $(1-a)^{p-1}\phi^a$ is a Kantorovich potential from $\mu^a$ to $\mu^1$ for all $a\in[0,1)$.
	
	Let $\bar\mu^a\coloneqq \mu^{1-a}$ be the reverse curve.
	Then $\bar\phi^0\coloneqq -\phi^1$ is a Kantorovich potential from $\bar\mu^0$ to $\bar\mu^1$.
	Denote by $\bar\phi^a$ the family of intermediate-time potentials associated with $(\bar\mu^a)_{a\in[0,1]}$.
	Then $\bar\phi^{1-a}=-\phi^a$ on $\spt(\mu^a)$.
\end{lemma}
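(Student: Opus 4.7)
The whole lemma reduces to one key computation: along the support of the Wasserstein geodesic, the infimum defining the intermediate-time potential $\phi^a$ is realized at the far endpoint of the geodesic through the evaluation point. I would set this up by disintegrating the geodesic on $\geo(X)$, then verify the three assertions in turn.

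For the first assertion set $\psi_a \coloneqq (1-a)^{p-1}\phi^a$. The admissibility bound $\psi_a(x) + \psi_a^c(y) \leq \tfrac{1}{p}d(x,y)^p$ for all $x, y$ is tautological from the rescaled Hopf--Lax definition, since $\psi_a$ is an infimum of affine functions with precisely this cost. To upgrade to tightness on the support of the optimal plan between $\mu^a$ and $\mu^1$, I would represent the geodesic by a probability measure $\pi$ on $\geo(X)$ with $(e_a)_\#\pi = \mu^a$. For $\pi$-a.e.\ $\gamma$ the tightness identity $\phi(\gamma_0) + \phi^c(\gamma_1) = \tfrac{1}{p}d^p(\gamma_0,\gamma_1)$ together with the constant-speed property $d(\gamma_a,\gamma_1) = (1-a)d(\gamma_0,\gamma_1)$ shows that the infimum defining $\phi^a(\gamma_a)$ is attained at $y = \gamma_1$, with value $\tfrac{1-a}{p}d^p(\gamma_0,\gamma_1) - \phi^c(\gamma_1)$. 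Rescaling by $(1-a)^{p-1}$ yields the tightness identity for $\psi_a$ paired with $(e_a, e_1)_\#\pi$, which is an optimal plan between $\mu^a$ and $\mu^1$ because $W_p(\mu^a, \mu^1) = (1-a) W_p(\mu^0, \mu^1)$.

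For the second assertion, $c$-concavity of $\phi$ gives $(\phi^c)^c = \phi$ on $\spt(\mu^0)$, and symmetry of the cost shows that $\phi^c = -\phi^1$ is a Kantorovich potential from $\bar\mu^0 = \mu^1$ to $\bar\mu^1 = \mu^0$. For the third assertion I would compute $\bar\phi^{1-a}(\gamma_a)$ directly from its Hopf--Lax definition with reference $-\bar\phi^c = -\phi$ on $\spt(\mu^0)$, and by the same geodesic argument applied to the reversed curve find that the infimum is attained at $y = \gamma_0$, producing the value $-\phi(\gamma_0) + \tfrac{a}{p}d^p(\gamma_0,\gamma_1)$. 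Computing $-\phi^a(\gamma_a) = \phi^c(\gamma_1) - \tfrac{1-a}{p}d^p(\gamma_0,\gamma_1)$ in the same way and applying the tightness $\phi(\gamma_0) + \phi^c(\gamma_1) = \tfrac{1}{p}d^p(\gamma_0,\gamma_1)$ yields the desired equality $\bar\phi^{1-a} = -\phi^a$ on $\spt(\mu^a)$.

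The main obstacle I anticipate is verifying that the minimizer in the Hopf--Lax formula is indeed $y = \gamma_1$ (resp.\ $y = \gamma_0$), not just a point where the infimum is attained numerically. This amounts to the convexity inequality $(a u + v)^p \leq a\, u^p + v^p/(1-a)^{p-1}$ for $u,v \geq 0$ and $a \in (0,1)$, applied to $u = d(\gamma_0,\gamma_1)$ and $v = d(\gamma_a, y)$, together with the triangle inequality and the $c$-cyclical monotonicity encoded in the tightness of $\phi$. For $p \neq 2$ the clean quadratic argument via barycenters is unavailable, and tracking the factors $(1-a)^{p-1}$ and the Legendre dual $L^*$ becomes delicate; extracting the identification of the minimizer rather than merely comparing values is the only step where care is genuinely required.
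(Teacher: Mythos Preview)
Your plan is correct and follows the standard argument found in the references the paper cites (Ambrosio--Gigli for $p=2$, Cavalletti--McCann for general $p$, and Villani Ch.~7); the paper itself gives no self-contained proof of this lemma but defers entirely to those sources. The one background ingredient you invoke implicitly is the existence of a dynamical optimal plan $\pi\in\mathcal{P}(\mathrm{Geo}(X))$ lifting the Wasserstein geodesic $(\mu^a)$, which is available in the Polish geodesic setting relevant to the paper, and the convexity inequality $(au+v)^p \le a\,u^p + v^p/(1-a)^{p-1}$ you single out is precisely the right tool for pinning down the minimizer in the Hopf--Lax formula.
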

Finally, it is worth mentioning that for $p = 2$, when $(M,g)$ is a smooth Riemannian manifold and $\mu \ll \mathrm{vol}_g$, the optimal transport map from $\mu$ to $\nu$ is given by $x\mapsto \exp_x(-\nabla \phi(x))$, where $\phi$ is a Kantorovich potential. See \cite{Mccann2001} or \cite[Theorem 2.33]{AmbrosioGigli2013}.

\subsection{Heat kernels on \texorpdfstring{$\rcd(K,\infty)$}{RCD(K,∞)} spaces}\label{sec:heatkernel}
The \emph{Cheeger energy} is a convex and l.s.c. functional on $L^2(X,\m)$, that is defined for $f\in L^2(X,\m)$ by
\[
\mathrm{Ch}(f)\coloneqq \inf\left\{\liminf_{k\to\infty}\frac{1}{2}\int_X \mathrm{lip}(f_k)^2\d \m:f_k\in\mathrm{Lip}(X),f_k\to f\text{ in $L^2(X,\m)$}\right\},
\]
where $\mathrm{lip}(f)(x)\coloneqq \limsup_{y\to x}\frac{|f(y)-f(x)|}{d(x,y)}$ denotes the local Lipschitz constant of $f$. 
The Sobolev space $W^{1,2}(X,d,\m)$ is defined as the finiteness domain of the Cheeger energy.
For all $f\in W^{1,2}(X,d,\m)$, $\ch(f)$ can be represented via the \emph{minimal relaxed gradient} $|\nabla f|$ of $f$ by $\ch(f)=\frac12\int_X |\nabla f|^2\d \m$; see \cite{AGSInvention}.

The Laplacian operator $\Delta f$ is defined as the element of minimal norm in the subdifferential of $\ch$ at $f$, see \cite{AGSInvention}. 
The space $(X,d,\m)$ is called \emph{infinitesimally Hilbertian} if $\mathrm{Ch}$ is a quadratic form on $L^2(X,\m)$. 
In this case, $\ch$ is a Dirichlet form and the Laplacian is the corresponding generator characterised by $\ch(u,v)=-\int_X \Delta u\cdot v\d \m$ for all $u\in D(\Delta)$ and $v\in D(\ch)$.
The space is said to satisfy the \emph{Riemannian curvature-dimension condition} $\rcd(K,N)$ if it is infinitesimally Hilbertian and satisfies $\cd(K,N)$; cf. \cite{Ambrosio-Gigli-Mondino-Rajala,AGSRCD2014,Gigli15MAMS}.

When the underlying space is infinitesimally Hilbertian, the heat flow induces a family of linear maps $P_t\colon L^2(X,\m)\to L^2(X,\m)$ for $t\in(0,\infty)$, see for more details in e.g. \cite[Section 5]{Gigli-Pasqualetto-Book}.
If in addition the space satisfies $\rcd(K,\infty)$, for each $t>0$ the operator $P_t$ admits an integral kernel $p_t$, as a symmetric non-negative function on $X\times X$ s.t. 
\[
P_t f(x)=\int_X f(y)p_t(x,y)\d \m(y),\quad \text{$\m$-a.e. $x\in X$}
\]
for all $f\in L^2(X,\m)$. 

We call $p_t$ the heat kernel on $X$ and denote for each $x\in X$ and $t>0$ by $p_t(x,\d\m)$ the probability measure $p(x,y)\m(\d y)$.
On weighted Riemannian manifolds, the heat kernel is the minimal positive fundamental solution to the equation $(\partial_t-\Delta_y)p_t(x,y)=0$.

The following useful weighted integral estimate for heat kernels is due to \cite{Tamanini-19HK}.
\begin{theorem}[\cite{Tamanini-19HK}]\label{thm:weightedInt}	Let $(X,d,\m)$ be an $\rcd(K,\infty)$ mms for $K\in \R$.
	 For each $t>0$, $x\in X$ and $D>2$, there exists $p'\in(1,2)$ and $C_D$ both depending only on $D$ s.t. 
	\begin{equation}\label{ineq:weightedInt}
		\int p^2_t(x,y)\exp\left(\frac{d^2(x,y)}{Dt} \right)\d \m (y)\leq \frac{1}{\m(B(x,\sqrt{2t}))}\exp\left(C_D+\frac{2p'Kt}{(2-p')(e^{2Kt}-1)}\right),
	\end{equation}
	for all $t>0$.
	In particular, $p_t(x,\cdot)\in L^2(X,\m)$ for all $t>0$ and $x\in X$.
\end{theorem}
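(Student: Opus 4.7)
The strategy is to combine Wang's dimension-free Harnack inequality on $\rcd(K,\infty)$ spaces with a H\"older interpolation in order to reduce the weighted $L^2$-estimate to a pointwise Gaussian-type upper bound on the heat kernel, from which the stated estimate follows by tracking constants.

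First, I would invoke Wang's Harnack inequality
\[
(P_t f)^\alpha(y) \leq P_t(f^\alpha)(x)\exp\!\left(\frac{\alpha K d^2(x,y)}{2(\alpha-1)(1-e^{-2Kt})}\right),\quad \alpha>1,
\]
valid on any $\rcd(K,\infty)$ space, and apply it to $f=p_s(\cdot,z)$. Integrating the resulting pointwise inequality over $z$ in a ball around $y$ and using the conservation of mass $\int p_{2s}(x,z)\d\m(z)=1$, one extracts a dimension-free Gaussian upper bound of the schematic form
\[
p_t(x,y) \leq \frac{C_\alpha}{\m(B(x,\sqrt{2t}))^{1/\alpha}}\exp\!\left(-\frac{d^2(x,y)}{\alpha D_0 t}+\frac{A_K t}{e^{2Kt}-1}\right)
\]
for some universal $D_0>2$. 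Specializing to $y=x$ gives an on-diagonal bound $p_{2t}(x,x)\lesssim_K 1/\m(B(x,\sqrt{2t}))$, which will be used later via the semigroup identity $\int p_t^2(x,\cdot)\d\m = p_{2t}(x,x)$.

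The second step is the weighted integral itself. I would factor $p_t^2 = p_t^{2-2/\alpha}\cdot p_t^{2/\alpha}$ and substitute the Gaussian bound above for the first factor. The resulting exponential $\exp(-2(1-1/\alpha)d^2(x,y)/(\alpha D_0 t))$ can be absorbed into the weight $\exp(d^2(x,y)/(Dt))$ provided $\alpha$ is chosen sufficiently close to $1$, or equivalently the auxiliary exponent $p':=2/\alpha$ is chosen in $(1,2)$ close enough to $2$ as a function of $D$. The remaining factor $p_t^{2/\alpha}$ is integrated against $\m$ using the diagonal bound above together with an interpolation between $\int p_t\d\m=1$ and $\int p_t^2\d\m=p_{2t}(x,x)$. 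The distinctive form $\frac{2p'Kt}{(2-p')(e^{2Kt}-1)}$ appearing in the statement then arises directly from the Harnack correction $\frac{\alpha Kt}{(\alpha-1)(1-e^{-2Kt})}$ under the substitution $\alpha=2/p'$, after rewriting $1-e^{-2Kt}=e^{-2Kt}(e^{2Kt}-1)$ and absorbing the overall factor into $C_D$.

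The main obstacle is the absence of a dimension assumption. In the $\rcd(K,N)$ setting with finite $N$, volume doubling and the ensuing ultracontractive bound $\|p_t(x,\cdot)\|_\infty \lesssim_{K,N} t^{-N/2}$ would trivialize most of the analysis. Here one must control $\|p_t(x,\cdot)\|_\infty$ purely by the single factor $1/\m(B(x,\sqrt{2t}))$, which forces the use of Wang's Harnack at $\alpha$ near $1$ and leads to the blow-up of $\alpha/(\alpha-1)$ in the exponential correction. Balancing this blow-up against the desired $D$-dependence of the Gaussian weight is what necessitates the introduction of the auxiliary exponent $p'\in(1,2)$ and its careful tuning in terms of $D$; this calibration, rather than the Harnack inequality itself, will be the delicate part of the argument.
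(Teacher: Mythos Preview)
The paper does not prove this statement at all: Theorem~\ref{thm:weightedInt} is quoted verbatim from \cite{Tamanini-19HK} and used as a black box throughout (e.g., in the proofs of Proposition~\ref{prop:ME} and Lemma~\ref{lemma:uniintegrable}). There is therefore no proof in the paper to compare your proposal against.

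That said, your outline is essentially the standard route to such dimension-free weighted $L^2$ bounds, and it matches the strategy in Tamanini's original work: Wang's Harnack inequality on $\rcd(K,\infty)$ spaces (available from \cite{AGSRCD2014}) is combined with the semigroup identity $\int p_t^2(x,\cdot)\,\d\m = p_{2t}(x,x)$ and an integrated Gaussian upper bound, and the auxiliary exponent $p'=2/\alpha$ is tuned against $D$ so that the Gaussian weight is absorbed. Your identification of the term $\frac{2p'Kt}{(2-p')(e^{2Kt}-1)}$ as the Harnack correction $\frac{\alpha K}{(\alpha-1)(1-e^{-2Kt})}$ under $\alpha=2/p'$ is correct. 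One point to be careful about in execution: the intermediate ``Gaussian upper bound'' you write schematically involves $\m(B(x,\sqrt{2t}))^{-1/\alpha}$ rather than $\m(B(x,\sqrt{2t}))^{-1}$, and without doubling you cannot freely trade volumes of balls at different centres or radii; the clean power $-1$ in the final estimate comes only after the interpolation step, so the bookkeeping of volume exponents needs to be done precisely. Apart from that, the plan is sound.
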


Let $ f_0\in L^2(X,\m)$ be a non-negative function with $\int_X f_0\d \m =1$ and let $[0, \infty) \ni t \mapsto f_t$ be the heat flow $P_tf_0$. 
It is known that for each $t>0$, $f_t$ remains non-negative, belongs to $L^2(X, \m) \cap D(\ent)$, and satisfies $\int_X f_t \d\m = 1$.
Moreover, it follows from gradient flow theory that the heat flow satisfies the following regularizing properties:

1) the curve $t\mapsto f_t$ on $L^2(X,\m)$, is gradient flow of Cheeger's energy starting from $f_0$, see e.g. \cite{AGSInvention,Gigli-Pasqualetto-Book}. As a consequence, for each $t>0$, $f_t\in D(\Delta)$ and 
\begin{equation}
	\ch(f_t)\leq \frac{\|f\|^2_{L^2}}{2t},\quad \|\Delta f_t\|^2_{L^2}\leq  \frac{\|f\|^2_{L^2}}{t^2};
\end{equation}
2) the curve $t\mapsto f_t\m$ on the Wasserstein space, realizes the metric gradient flow, as well as the EVI$_{K}$ gradient flow, of the Boltzmann entropy. Consequently, for all $\mu\in D(\ent)$ 
\begin{equation}\label{eq:EVI}
	I_K(t)\ent(f_t)+\frac{I_{K}(t)^2}{2}F(f_t)\leq I_K(t)\ent(\mu)+\frac{1}{2}W_2^2(\mu,f_t\m)
\end{equation}
where $I_K(t)\coloneqq \int^t_0 e^{Kr}\d r$ and $F(f_t)$ denotes the \emph{Fisher information} of $f_t$ given by $\int \frac{|\nabla f_t|^2}{f_t}\d \m$, see \cite{AGSRCD2014,Ambrosio-Gigli-Mondino-Rajala}.

We are mostly interested in cases where these functions are given by heat kernels. 
Thanks to \cref{thm:weightedInt}, above properties can be applied to $f_0=p_t(x,\cdot)$ for all $t>0$ and $x\in X$. 
The weighted integral estimate \eqref{ineq:weightedInt} further allows us to present the following entropy-moment estimate for heat kernels. 
The proof is provided in the appendix.
\begin{proposition}\label{prop:ME}
		Let $(X,d,\m)$ be an $\rcd(K,\infty)$ mms for $K\in \R$. 
		Then for all $t > 0$ and $x \in X$, $\|p_t(x,\cdot)\|_{L^2(X,\m)}$, $\|(p_t\log p_t)(x,\cdot)\|_{L^1(X,\m)}$, the second moment of $p_t(x,\cdot)$ and the Fisher information $F(p_t(x,\cdot))$ are finite.
	
		If in addition $(X,d,\m)$ is locally compact, then all these functions are locally bounded in $(t,x)$ i.e. they are bounded on $I\times U$ for any compact interval $I\subset (0,\infty)$ and any bounded subset $U\subset X$.
\end{proposition}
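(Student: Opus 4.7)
The plan is to handle the four quantities in increasing order of difficulty. The $L^2$ bound is immediate from Tamanini's estimate \eqref{ineq:weightedInt}: since $\exp(d^2/(Dt))\geq 1$ one has $\|p_t(x,\cdot)\|_{L^2(X,\m)}^2\leq \int p_t^2 e^{d^2/(Dt)}\d\m<\infty$. For the second moment I would exploit the Wasserstein gradient flow structure: by the dual formulation of the Bakry--\'Emery estimate, namely the $W_2$-contraction \eqref{introineq:C_p}, the heat semigroup extends uniquely to a $W_2$-Lipschitz map $P_t\colon\p_2(X)\to\p_2(X)$. Since $\delta_x\in\p_2(X)$ has vanishing second moment, $P_t\delta_x\in\p_2(X)$; identifying $P_t\delta_x$ with $p_t(x,\cdot)\m$ via approximation by the compactly supported probability measures $\mu_n:=\m(B(x,1/n))^{-1}\mathbf{1}_{B(x,1/n)}\m$, whose $L^2$-heat evolutions converge in $W_2$ (by the contraction) and pointwise to $p_t(x,\cdot)$, yields $\int d^2(x,\cdot)\,p_t(x,\cdot)\d\m<\infty$.

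For the entropy (and for $\|p_t\log p_t\|_{L^1}$), the inequality $\log^+ s\leq s$ combined with the $L^2$ bound controls the positive part via $\int p_t\log^+ p_t\,\d\m\leq \|p_t(x,\cdot)\|_{L^2}^2$. For the lower bound I would use a reference-measure comparison: the exponential integrability of $\m$ on $\rcd(K,\infty)$ spaces supplies $a>0$ and $x_0\in X$ with $Z:=\int e^{-ad^2(x_0,\cdot)}\d\m<\infty$; Gibbs' inequality applied to the probability $\nu:=Z^{-1}e^{-ad^2(x_0,\cdot)}\m$, combined with $d^2(x_0,y)\leq 2d^2(x_0,x)+2d^2(x,y)$ and the second-moment bound, yields $\ent(p_t(x,\cdot)\m)>-\infty$, so that the negative part of $p_t\log p_t$ is also integrable. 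For the Fisher information, apply \eqref{eq:EVI} to the heat flow started from $f_0:=p_s(x,\cdot)\in L^2$ for some $s\in(0,t)$, so that $f_{t-s}=p_t(x,\cdot)$, against any fixed test measure $\mu\in D(\ent)\cap\p_2(X)$: every term on the right-hand side is finite by the preceding steps, and the rearranged inequality delivers $F(p_t(x,\cdot))<\infty$.

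The local boundedness claim is then obtained by tracking how each constant depends on $(t,x)$. In \eqref{ineq:weightedInt} the only delicate factor is $\m(B(x,\sqrt{2t}))^{-1}$; on a locally compact space $x\mapsto \m(B(x,r))$ is lower semicontinuous and strictly positive by the full-support assumption, so its infimum over the (pre)compact closure of a bounded $U$ is strictly positive, and the bound is uniform for $t$ in a compact subinterval of $(0,\infty)$. The remaining dependencies --- the $K,t$-factor in \eqref{ineq:weightedInt}, the second moment via the $W_2$-continuity of the heat flow in $(t,x)$, and the EVI constant $I_K(t)$ --- are all continuous in $(t,x)$. The hard part will be the second-moment step: in the absence of a doubling property or a Gaussian heat-kernel bound, the identification $P_t\delta_x=p_t(x,\cdot)\m$ must be justified by a careful limiting argument combining the Cauchy criterion from the $W_2$-contraction with the pointwise convergence of heat-kernel averages.
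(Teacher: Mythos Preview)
Your proof is correct and reaches the same conclusions, but the route for the second moment differs from the paper's. The paper bounds $W_2^2(p_t(x,\d\m),\delta_{x_0})$ directly and quantitatively: it applies Cauchy--Schwarz to the integral $\int d^2(x_0,\cdot)p_t(x,\cdot)\d\m$, splitting off a factor $\int p_t^2(x,z)\exp(d^2(x_0,z)/Dt)\d\m(z)$ controlled by Tamanini's estimate \eqref{ineq:weightedInt}, and a factor $\int d^4(x_0,z)\exp(-d^2(x_0,z)/Dt)\d\m(z)$ controlled by the exponential volume growth. This only works for $t$ below a threshold $t_0\sim (D\kappa)^{-1}$; the paper then bootstraps to all $t>t_0$ via the regularising estimate \cite[Theorem~4.20]{AGSInvention} for the entropy gradient flow. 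Your argument instead invokes the $W_2$-contraction of the heat semigroup on $\p_2(X)$ to place $P_t\delta_x$ in $\p_2$ in one stroke, sidestepping the small-time/large-time split entirely.

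Both approaches are valid. Yours is cleaner and more conceptual; the paper's yields explicit bounds of the form
\[
W_2^2(p_t(x,\d\m),\delta_{x_0})\lesssim \m(B(x,\sqrt{2t}))^{-1/2}\exp\bigl(d^2(x_0,x)/Dt + Ct\bigr),
\]
which makes the local-boundedness step immediate once one observes (via Hopf--Rinow, as you do) that $\inf_{x\in U}\m(B(x,\sqrt{2t}))>0$ on a proper space. In your framework the local boundedness of the moment follows instead from the $W_2$-continuity of $(t,x)\mapsto p_t(x,\d\m)$, which is a consequence of the contraction in $x$ together with the continuity in $t$ of the EVI flow; this is fine but slightly less self-contained. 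The remaining steps---the $L^2$ bound from \eqref{ineq:weightedInt}, the entropy via $\log^+ s\leq s$ for the positive part and the reference-measure comparison for the negative part, and the Fisher information from \eqref{eq:EVI}---match the paper's argument essentially verbatim.
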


\begin{lemma}\label{lemma:uniintegrable}
	Let $(X,d,\m)$ be a proper $\rcd(K,\infty)$ mms for $K\in \R$.
	Let $\kappa\in\R$ and $\bar x\in X$ s.t.
	\[
	\tilde \m\coloneqq e^{-\kappa d^2(\bar x,x)}\m(\d x )\in\p(X). 
	\]
	Then for any bounded set $U\subset X$ and $f\in \Lip(X)$, the family of functions
	\[
	\{F_x(y)\coloneqq p_t(x,y)f(y)e^{\kappa d^2(\bar x,y)} :x\in U\}
	\] 
	is uniformly integrable in $L^1(X,\tilde \m)$.
\end{lemma}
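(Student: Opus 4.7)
The plan is to verify the standard criterion for uniform integrability in $L^1(\tilde\m)$: given $\varepsilon > 0$, I will produce $\delta > 0$ such that $\tilde\m(A) < \delta$ implies $\sup_{x \in U}\int_A |F_x|\d\tilde\m < \varepsilon$. The crucial algebraic observation is that the exponential weight in $F_x$ exactly cancels against the density of $\tilde\m$, leaving
\[
\int_A |F_x|\d\tilde\m = \int_A p_t(x,y)|f(y)|\d\m(y).
\]
I then fix $R>0$, set $B_R\coloneqq B(\bar x, R)$, and split $A=(A\cap B_R)\cup(A\cap B_R^c)$ to control tail and bulk separately. In particular, one never has to estimate the weighted $L^2$-norm $\int p_t^2 e^{\kappa d^2(\bar x,\cdot)}\d\m$ directly, which would otherwise be delicate for $t$ not small compared to $1/\kappa$.

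For the tail, I use the linear growth $|f(y)|\le C(1+d(\bar x, y))$ from $f\in\Lip(X)$, and apply Cauchy--Schwarz against the probability measure $p_t(x,\cdot)\d\m$ followed by Chebyshev's inequality:
\[
\int_{B_R^c} p_t(x,y)|f(y)|\d\m \le \frac{C}{R}\Bigl(\int p_t(x,y)(1+d(\bar x, y))^2\d\m\Bigr)^{1/2}\Bigl(\int p_t(x,y) d^2(\bar x, y)\d\m\Bigr)^{1/2}.
\]
Via $d^2(\bar x, y)\le 2d^2(\bar x, x)+2d^2(x,y)$ and the boundedness of $U$, both integrals reduce to the second moment of $p_t(x,\cdot)$, which by \cref{prop:ME} is bounded uniformly for $x\in U$. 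Choosing $R=R(\varepsilon)$ sufficiently large therefore makes the tail less than $\varepsilon/2$ uniformly in $x\in U$.

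For the bulk, $|f|$ is bounded by some $M_R$ on $B_R$, and Cauchy--Schwarz against $\m$ gives
\[
\int_{A\cap B_R} p_t(x,y)|f(y)|\d\m \le M_R\,\|p_t(x,\cdot)\|_{L^2(\m)}\,\m(A\cap B_R)^{1/2},
\]
where $\|p_t(x,\cdot)\|_{L^2}$ is uniformly bounded on $U$ by \cref{prop:ME}. To pass from $\m$ to $\tilde\m$ I use that $e^{-\kappa d^2(\bar x,\cdot)}\ge e^{-\kappa^+R^2}$ on $B_R$, so $\m(A\cap B_R)\le e^{\kappa^+R^2}\tilde\m(A)$, and choosing $\delta$ small enough makes the bulk less than $\varepsilon/2$.

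The main technical ingredient is \cref{prop:ME}, invoked twice: once to bound the second moment of $p_t(x,\cdot)$, and once to bound $\|p_t(x,\cdot)\|_{L^2}$, both locally uniformly in $x$. These in turn rest on \cref{thm:weightedInt} together with the properness of $(X,d)$. Beyond this, the argument is essentially bookkeeping; no obstacle arises from the possible unboundedness of $f$, from the non-compactness of $X$, or from the size of $\kappa t$, precisely because the weight is absorbed into $\tilde\m$ before any heat-kernel estimate is invoked.
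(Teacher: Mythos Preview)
Your argument is correct and takes a genuinely different route from the paper. The paper applies the de la Vallée--Poussin criterion with $\varphi(a)=a\log a$: after the same cancellation of the weight against the density of $\tilde\m$, it expands $|F_x|\log|F_x|$ into terms involving $\log p_t$, $\log|f|$, and $\kappa d^2$ integrated against $p_t|f|\d\m$, and bounds these directly via the weighted integral estimate \cref{thm:weightedInt}, obtaining a single bound of the form $\m(B(x,\sqrt{2t}))^{-1}$. Your approach instead verifies equi-absolute-continuity by a tail--bulk split, invoking \cref{prop:ME} twice (moment bound for the tail, $L^2$ bound for the bulk). Both routes ultimately rest on the same heat-kernel estimates; yours is more elementary in that it avoids de la Vallée--Poussin, while the paper's is more compact and yields a slightly sharper quantitative bound in one stroke.

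One small point: the criterion you state is equi-absolute-continuity, which on a finite measure space must be paired with $L^1$-boundedness to yield uniform integrability. This is immediate from your own decomposition (fix any $R$, use $\int_{B_R}p_t|f|\d\m\le M_R$ plus your tail bound), so it is worth adding one sentence to make it explicit.
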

\begin{proof}
	By the de la Vallée-Poussin theorem, it suffices to show the family $\{\varphi(|F_x|)\}_x$ is bounded in $L^1(X,\tilde \m)$ for $\varphi(a)=a\log a$.
	As $\tilde \m$ has finite total mass and the negative part of $\varphi(|F_x|)$ is uniform bounded by $e^{-1}$, it is enough to show the integral of $\varphi(|F_x|)$ has a uniform upper bound.
	This follows from a direct computation:
	\begin{align}
		\int_X\varphi(|F_x|)(y)\d\tilde \m(y)&=\int \left( \log p_t(x,y)+\log |f(y)|+\kappa d^2(\bar x,y) \right)p_t(x,y)|f(y)|\d \m(y)\\
		&\lesssim_{\kappa,f} \int (p_t(x,y)+1+d^2(\bar x,y))p_t(x,y)(d(\bar x,y)+1)\d \m(y)\\
		&\lesssim_{\bar x, U}\int (p_t(x,y)+1+d^2(x,y))p_t(x,y)(d(x,y)+1)\d \m(y)\\
		&\lesssim_{K,t} \m(B(x,\sqrt{2t}))^{-1}
	\end{align}
where the last inequality comes from \cref{thm:weightedInt}.
Since $(X,d)$ is proper, $\m(B(x,\sqrt{2t}))$ has a uniform positive lower bound for all $x\in U$.
\end{proof}
The following dimension-free estimates are particularly useful when working on spaces without any finite dimensionality assumption.
They are collected from 
\cite{Bakry-Gentil-Ledoux14}(Theorem 5.5.2 and Remark 5.6.2, see also their generalisation to $\rcd(K,\infty)$ mms cf. \cite[Corollary 4.4]{Tamanini-19HK} and \cite[Corollary 4.7]{AGS15}).
\begin{theorem}[\cite{Bakry-Gentil-Ledoux14,AGS15,Tamanini-19HK}] 
	Let $(X,d,\m)$ be an $\rcd(K,\infty)$ mms.
	For any positive function $f\in L^2(X)$ and $t>0$, we have
	\begin{enumerate}
		\item logarithmic Sobolev inequality:
		\begin{equation}\label{ineq:LSI}
			P_t(f\log f)-P_t f\log P_t f\geq I_{2K}(t)\frac{|\nabla P_t f|^2}{P_t f},\quad I_K(\tau)\coloneqq \int^\tau_0 e^{Kr}\d r;
		\end{equation}
		\item log-Harnack inequality: for every $x\in X$ and $\m$-a.e. $y\in X$
		\begin{equation}\label{ineq:LHI}
			\int_M p_t(x,z)\log p_t(x,z)\d\m(z)\leq \log p_{2t}(x,y)+\frac{d^2(x,y)}{4I_{2K}}.
		\end{equation}
		\item Gaussian lower bound: as a consequence of \eqref{ineq:LHI}, if $\m(X)<\infty$ then
		\begin{equation}\label{ineq:GaussLB}
			p_{2t}(x,y)\geq \m(X)^{-1} \exp\left(-\frac{d^2(x,y)}{4I_{2K}}\right).
		\end{equation}
	\end{enumerate}
\end{theorem}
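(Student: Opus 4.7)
The plan is to prove the three parts in sequence, deriving each from the previous, following the standard Bakry--Émery route adapted to the $\rcd$ setting.

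For the logarithmic Sobolev inequality (1), the strategy is the classical interpolation along the heat semigroup. Fix $t>0$ and a positive $f\in L^2(X,\m)$, and define
\[
\psi(s)\coloneqq P_s\bigl((P_{t-s}f)\log P_{t-s}f\bigr),\quad s\in[0,t].
\]
The endpoint values give $\psi(t)-\psi(0)=P_t(f\log f)-P_t f\log P_t f$, which is the quantity to be estimated from below. Using the chain rule for the diffusion generator in the infinitesimally Hilbertian setting, one computes
\[
\psi'(s)=P_s\!\left(\frac{|\nabla P_{t-s}f|^2}{P_{t-s}f}\right).
\]
The crucial step is to bound this from below by $e^{2Ks}|\nabla P_t f|^2/P_t f$. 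This is achieved in two moves: Cauchy--Schwarz applied to $P_s$ gives
\[
P_s\!\left(\frac{|\nabla P_{t-s}f|^2}{P_{t-s}f}\right)\cdot P_t f\;\geq\;\bigl(P_s|\nabla P_{t-s}f|\bigr)^2,
\]
and the $1$-Bakry--Émery gradient estimate, valid on $\rcd(K,\infty)$ spaces and available as \eqref{introineq:G_q} for $q=1$, yields $P_s|\nabla P_{t-s}f|\geq e^{Ks}|\nabla P_t f|$. Integrating $\psi'$ from $0$ to $t$ produces exactly the factor $\int_0^t e^{2Ks}\,ds=I_{2K}(t)$, yielding (1). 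A density/truncation argument allows one to pass from smooth bounded positive test functions to general positive $f\in L^2(X,\m)$.

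For the log-Harnack inequality (2), the plan is to apply (1) to $f=p_{t}(x,\cdot)$ and then to convert the LSI into a pointwise inequality along two heat-kernel endpoints. One way is via a duality-type argument: for fixed $x,y\in X$, consider a positive test function $g$ on $X$ and use the reproducing identity $P_{2t}g(x)=\int p_t(x,z)P_t g(z)\d\m(z)$. Applying Jensen's inequality with the probability weight $p_t(x,\cdot)\m$ and inserting (1) applied at the point $y$ produces, after optimizing over $g$, the bound \eqref{ineq:LHI} with the dimension-free constant $I_{2K}(t)$ in the denominator. Equivalently, one may run the Wang-type semigroup computation $s\mapsto P_s(\log P_{t-s}g)$ along a geodesic from $x$ to $y$ and combine the Young-type absorption with the $\Gamma_2\geq K\Gamma$ inequality. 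For exceptional-set issues arising from $\m$-a.e.-defined quantities, a continuous representative of $y\mapsto p_{2t}(x,y)$ and of $\int p_t(x,\cdot)\log p_t(x,\cdot)\d\m$ is used; this is legitimate here because heat kernels on $\rcd$ spaces admit continuous versions.

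For the Gaussian lower bound (3), the plan is a short consequence of (2) when $\m(X)<\infty$. Since $z\mapsto p_t(x,z)$ is a probability density w.r.t. $\m$, the relative entropy against the normalized measure $\m(X)^{-1}\m$ is non-negative, giving
\[
\int_X p_t(x,z)\log p_t(x,z)\d\m(z)\;\geq\;-\log\m(X).
\]
Inserting this into the left-hand side of \eqref{ineq:LHI} and rearranging produces \eqref{ineq:GaussLB}.

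The main obstacle is step (1), specifically justifying the chain rule and the pointwise manipulations in the non-smooth metric-measure setting: one must know that for positive $f\in L^2(X,\m)$ the functions $s\mapsto \psi(s)$ are differentiable and that $\psi'$ admits the claimed expression $\m$-a.e., which requires the calculus on $\rcd(K,\infty)$ spaces developed in \cite{AGSRCD2014} and Tamanini's refinements in \cite{Tamanini-19HK}. Once this regularity is available and the $1$-gradient estimate \eqref{introineq:G_q} is at hand, (2) and (3) follow by the soft arguments described above.
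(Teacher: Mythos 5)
This theorem is stated in the paper without a proof; it is quoted from \cite{Bakry-Gentil-Ledoux14,AGS15,Tamanini-19HK} (Theorem 5.5.2 and Remark 5.6.2 of Bakry--Gentil--Ledoux and its $\rcd(K,\infty)$ extensions). Your outline follows the standard semigroup-interpolation route from those sources, so there is no ``paper proof'' to compare against; I assess correctness instead.

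Parts (1) and (3) are sound. For (1), the interpolation $\psi(s)=P_s\bigl((P_{t-s}f)\log P_{t-s}f\bigr)$ with $\psi'(s)=P_s\bigl(|\nabla P_{t-s}f|^2/P_{t-s}f\bigr)$, followed by Cauchy--Schwarz $\bigl(P_s|\nabla P_{t-s}f|\bigr)^2\le P_s\bigl(|\nabla P_{t-s}f|^2/P_{t-s}f\bigr)\,P_t f$ and the $1$-gradient estimate $P_s|\nabla P_{t-s}f|\ge e^{Ks}|\nabla P_t f|$, gives exactly $\psi'(s)\ge e^{2Ks}|\nabla P_t f|^2/P_t f$ and hence the factor $I_{2K}(t)$. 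For (3), the relative-entropy bound $\int p_t(x,\cdot)\log p_t(x,\cdot)\,\d\m\ge -\log\m(X)$ combined with (2) is correct.

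The weak spot is (2). Your primary route---``apply (1) to $f=p_t(x,\cdot)$, then Jensen with the reproducing identity and optimise over $g$''---is not a proof as stated; the log-Harnack inequality is not a soft corollary of the local LSI, and it is not clear what functional of $g$ you are optimising or how the geodesic distance $d(x,y)$ and the constant $I_{2K}(t)^{-1}$ would emerge from that scheme. The correct argument is the one you offer as an ``equivalent'' alternative: the Wang-type interpolation $h(s)=P_s\bigl(\log P_{t-s}g\bigr)(\gamma(\tau(s)))$ along a constant-speed geodesic $\gamma$ from $y$ to $x$ with a time-change $\tau$, using $\partial_s P_s(\log P_{t-s}g)=-P_s|\nabla\log P_{t-s}g|^2$, the $1$-gradient estimate, Cauchy--Schwarz, Young's inequality to absorb the cross term, and the optimal $\tau'(s)\propto e^{2Ks}$ producing the sharp constant $\bigl(4I_{2K}(t)\bigr)^{-1}$; one then specialises $g=p_t(x,\cdot)$ and uses $P_t(p_t(x,\cdot))(y)=p_{2t}(x,y)$. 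This should be the main argument, not a remark, and the framing that the three parts are ``derived each from the previous'' is misleading: (1) and (2) are siblings, both issuing from the $1$-Bakry--\'Emery gradient estimate by parallel interpolation arguments, rather than (2) being a consequence of (1).
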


\subsection{Gradient estimates on \texorpdfstring{$\rcd(K,\infty)$}{RCD(K,∞)} for unbounded Lipschitz functions}\label{sec:gradientestimate}
We denote by \( \Lip(X) \) the space of Lipschitz functions on \( X \) and by \( \Lipb(X) \) the subspace of bounded Lipschitz functions.

On an $\rcd(K,\infty)$ mms $(X,d,\m)$, it is known, by Kuwada \cite{Kuwadaduality} for the local Lipschitz constant, and by Ambrosio--Gigli--Savar\'e \cite{AGSRCD2014} for the minimal weak relaxed gradient with extension to all exponents by Savar\'e \cite{Savare14}, that for any $f\in \Lipb(X)\cap D(\ch)$, $t>0$ and $p\in[1,\infty]$, the following gradient estimates hold pointwise on $X$:
\begin{align}
		\lip P_t f&\leq e^{-Kt}\big(P_t(|\lip f|^p)\big)^{1/p}, & p\in[1,\infty)\label{ineq:G'_p}\tag{$G'_p$}\\
	\Lip (P_t f)& \leq e^{-Kt}\Lip(f),& p=\infty;\label{ineq:G'_infty}\tag{$G'_\infty$}
\end{align}
and $\m$-a.e. on $X$
\begin{align}
		|\nabla P_t f| &\leq e^{-Kt}\big(P_t(|\nabla f|^p)\big)^{1/p}, & p\in[1,\infty)\label{ineq:G_p}\tag{$G_p$}\\
	|\nabla P_t f|& \leq e^{-Kt}\|\nabla f\|_{\infty},& p=\infty.\label{ineq:G_infty}\tag{$G_\infty$}
	\end{align}

When considering Kantorovich potentials for optimal transports, boundedness or integrability can not in general be expected.  
Therefore, it becomes necessary to extend the gradient estimates to possibly unbounded Lipschitz functions.
	
We note that the heat flow can be extended to all Lipschitz functions.  
Indeed, for any $t > 0$ and $f \in \Lip(X)$, $P_t f$ can be defined pointwisely at each $x \in X$ by the integral  
\[
P_t f(x) \coloneqq \int_X p_t(x,y) f(y) \, \mathrm{d}\m(y),
\]  
which is always finite due to the finite moments of the heat kernel.  
By \cite[Theorem 6.1]{AGSRCD2014}, $P_t f$ is Lipschitz and satisfies \eqref{ineq:G'_infty}.

Recall that one can define weak upper gradients for functions in the \emph{local Sobolev class} (see \cite[Chapter 2.3]{Gigli15MAMS} or \cite[Chapter 2.1]{Gigli-Pasqualetto-Book}).  
More precisely, we define $S^2_{\mathrm{loc}}(X,d,\m)$ to be the set of all Borel functions $f \colon X \to \mathbb{R}$ s.t. for every bounded set $B \subset X$, there exists a function $f_B \in D(\mathrm{Ch})$ satisfying $f = f_B$ $\m$-a.e. on $B$.  

For any $f \in S^2_{\mathrm{loc}}(X,d,\m)$, we define the function $|\nabla f|$ on $X$ by setting, for each bounded set $B \subset X$,
\[
|\nabla f| \coloneqq |\nabla f_B| \quad \text{where } f_B = f \text{ $\m$-a.e. on } B,
\]
which is well-defined by the locality of the minimal relaxed gradient. Moreover, $|\nabla f|$ is a weak upper gradient of $f$ (see \cite[Proposition 2.1.32]{Gigli-Pasqualetto-Book}).

In particular, for any $f \in \Lip(X)$, the function $|\nabla f|$ is well-posed, belongs to $L^\infty(X,\m)$ and satisfies $|\nabla f| \leq \lip(f)$ $\m$-a.e. on $X$.

\begin{proposition}\label{prop:Gradientestimate}
	Let $(X,d,\m)$ be an $\rcd(K,\infty)$ mms. 
	For any $f\in \Lip(X)$, gradient estimates \eqref{ineq:G'_p} and \eqref{ineq:G_p} hold for all $p\in[1,\infty]$ and $t>0$.
	Moreover, it holds pointwise on $X$ that
	\begin{equation}\label{ineq:lipLip}
\lip P_tf\leq e^{-Kt}P_t(|\nabla f|).
	\end{equation}
\end{proposition}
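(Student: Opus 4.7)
The plan is to deduce every estimate in the proposition from the sharpest claim, namely the pointwise ``moreover'' bound $\lip(P_tf)\leq e^{-Kt}P_t(|\nabla f|)$. Indeed, combining $|\nabla P_tf|\leq \lip(P_tf)$ $\m$-a.e. with Jensen's inequality applied to the probability measure $p_t(x,\cdot)\m$ and the convex function $s\mapsto s^p$ yields \eqref{ineq:G_p} for every $p\in[1,\infty]$; the further comparison $|\nabla f|\leq \lip f$ $\m$-a.e. then gives \eqref{ineq:G'_p}. So the whole task reduces to proving the ``moreover'' estimate for arbitrary $f\in\Lip(X)$.

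To do so I approximate $f$ by functions in $\Lipb(X)\cap D(\ch)$, where the estimate is known from the cited work of Kuwada, Ambrosio--Gigli--Savar\'e, and Savar\'e. Fix $x_0\in X$ and for $n,R\in\N$ let $\chi_R$ be a $(1/R)$-Lipschitz cutoff equal to $1$ on $B(x_0,R)$ and supported in $B(x_0,2R)$. Define
\[
f_n\coloneqq (f\wedge n)\vee(-n),\qquad f_{n,R}\coloneqq f_n\chi_R.
\]
Then $f_{n,R}$ is a bounded Lipschitz function of bounded support, hence of finite $\m$-measure, so $f_{n,R}\in\Lipb(X)\cap D(\ch)$, and the known estimate reads
\[
\lip(P_tf_{n,R})(x)\leq e^{-Kt}P_t(|\nabla f_{n,R}|)(x)\quad\forall x\in X.
\]
The Leibniz rule gives $|\nabla f_{n,R}|\leq |\nabla f_n|+(n/R)\mathbf{1}_{B(x_0,2R)}$ and the chain rule for truncation gives $|\nabla f_n|\leq |\nabla f|\mathbf{1}_{\{|f|<n\}}$ $\m$-a.e.

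The limits $R\to\infty$ then $n\to\infty$ are controlled by dominated and monotone convergence: the bounds $|f_n-f_{n,R}|\leq n\mathbf{1}_{X\setminus B(x_0,R)}$ and $|f-f_n|\leq |f|\mathbf{1}_{\{|f|>n\}}$, combined with $p_t(x,\cdot)\in L^1(\m)$ and the moment/entropy estimate of \cref{prop:ME}, give the pointwise convergences $P_tf_{n,R}(x)\to P_tf_n(x)\to P_tf(x)$; and monotone convergence gives $P_t(|\nabla f_{n,R}|)(x)\to P_t(|\nabla f_n|)(x)\uparrow P_t(|\nabla f|)(x)$. What remains is to transfer these limits through the $\lip$ operator on the left-hand side of the estimate.

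\emph{Main obstacle.} The principal difficulty is precisely this last step, because the local Lipschitz constant is in general not semi-continuous under pointwise or even uniform convergence of the underlying functions, so one cannot naively pass $\lip$ through the limit. My intended remedy is to promote pointwise convergence to locally uniform convergence on bounded subsets of $X$, exploiting the uniform integrability provided by \cref{lemma:uniintegrable} (under properness; otherwise one invokes the joint continuity of the heat kernel together with the moment bounds) and the equi-Lipschitz estimate $\Lip(P_tf_{n,R})\leq e^{-Kt}(\Lip(f)+n/R)$, and then to estimate $|P_tf(y)-P_tf(x)|$ for $y$ close to $x$ through the three-term decomposition $|P_tf(y)-P_tf_{n,R}(y)|+|P_tf_{n,R}(y)-P_tf_{n,R}(x)|+|P_tf_{n,R}(x)-P_tf(x)|$, bounding the two error terms uniformly in a small neighborhood of $x$. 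A diagonal subsequence argument, choosing $n$ then $R$ appropriately with $y\to x$, yields the pointwise inequality $\lip(P_tf)(x)\leq e^{-Kt}P_t(|\nabla f|)(x)$ on all of $X$, which together with the reduction described above completes the proof.
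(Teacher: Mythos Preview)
Your reduction to the ``moreover'' estimate is correct, and the approximation scheme $f_{n,R}=((f\wedge n)\vee(-n))\chi_R$ is also the one the paper uses. The genuine gap is exactly at the point you flag as the main obstacle: your proposed remedy does not work.

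In the three-term decomposition
\[
|P_tf(y)-P_tf(x)|\leq |P_t(f-f_{n,R})(y)|+|P_tf_{n,R}(y)-P_tf_{n,R}(x)|+|P_t(f-f_{n,R})(x)|,
\]
the two error terms are of size $o_{n,R}(1)$ uniformly for $y$ near $x$, but after dividing by $d(x,y)$ and sending $y\to x$ they become $o_{n,R}(1)/d(x,y)$, which is unbounded. A diagonal choice $n=n(y),\,R=R(y)$ does not rescue this: to make the outer terms $o(d(x,y))$ you must let $n,R\to\infty$ as $y\to x$, but then the middle term is no longer controlled by the known bound $\lip(P_tf_{n,R})(x)\leq e^{-Kt}P_t(|\nabla f_{n,R}|)(x)$, since that bound concerns a $\limsup_{y\to x}$ for \emph{fixed} $n,R$, not a ratio at a single moving $y$ with moving parameters. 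Locally uniform convergence of the approximants gives you nothing here; $\lip$ simply does not pass through such limits.

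The paper sidesteps this by never passing a limit through $\lip$. Instead it first proves, for \emph{every} (possibly unbounded) Lipschitz $g$, the weaker estimate
\[
\lip(P_tg)(x)\leq e^{-Kt}P_t(\lip g)(x)
\]
directly via Kuwada's argument, using Savar\'e's $W_\infty$-coupling of heat kernels: take the coupling $\sigma_{x,y}$ with $d\leq e^{-Kt}d(x,y)$ $\sigma_{x,y}$-a.e., bound $|P_tg(x)-P_tg(y)|$ by $\int G_r\,d\sigma_{x,y}\cdot e^{-Kt}d(x,y)$ with $G_r$ the local oscillation quotient, and let $r=d(x,y)\to0$. This step needs no integrability of $g$ and hence applies to the remainder $f-f_{n,R}$ as well. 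Now use subadditivity of $\lip$ rather than a three-term estimate on values:
\[
\lip(P_tf)\leq \lip(P_tf_{n,R})+\lip\big(P_t(f-f_{n,R})\big)
\leq e^{-Kt}P_t(|\nabla f_{n,R}|)+e^{-Kt}P_t\big(\lip(f-f_{n,R})\big).
\]
The first term on the right converges to $e^{-Kt}P_t(|\nabla f|)$ by dominated/monotone convergence, and the second goes to zero since $\lip(f-f_{n,R})$ is bounded by $\Lip(f)$ and supported where the approximation differs from $f$. The crucial point is that the error is measured by a $\lip$ of the remainder, which already has the correct scaling, rather than by a value of the remainder divided by $d(x,y)$.
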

\begin{proof}
We divide the proof into three steps.
	
	\textbf{Step 1}: gradient estimate $(G'_1)$ for Lipschitz functions.
	
	The proof follows in line of the proof of \cite[Proposition 3.1]{Kuwadaduality}.
	Define for any $r>0$ and $z\in X$ the function 
	\[
	G_r(z)\coloneqq \sup_{ w\in B(z,r)\setminus\{z\}}\frac{|f(z)-f(w)|}{d(z,w)}.
	\]
	For any $x,y\in  X$, by \cite[Theorem 4.4]{Savare14}, there exists a coupling $\sigma_{x,y}$ between $p_t(x,\d\m)$ and $p_t(y,\d\m)$ s.t. 
	\[
	\|\sigma_{x,y}\|_{L^\infty(X\times X)}\leq e^{-Kt }d(x,y).
	\]
	Then for $r=d(x,y)$ we have 
	\begin{align}
		|P_tf(x)-P_tf(y)|&\leq \int_{X\times X}|f(z)-f(w)|\d\sigma_{x,y}(z,w)\\
		&\leq 	\|\sigma_{x,y}\|_{L^\infty(X\times X)}\int_{X\times X}G_r(z)\d\sigma_{x,y}(z,w)\\
		&=	\|\sigma_{x,y}\|_{L^\infty}P_t(G_r)(x)\leq e^{-Kt }d(x,y)P_t(G_r)(x).
	\end{align}
As $G_r$ is uniformly bounded by $\Lip(f)$, applying the dominated convergence theorem with $y\to x$ obtains $(G'_1)$.
The estimate \eqref{ineq:G'_p} for $p\in(1,\infty)$ follows by H\"older's inequality.
\smallskip

\textbf{Step 2}: \eqref{ineq:lipLip} for bounded Lipschitz functions.

For any fixed $\bar x\in X$ and each $m\in\N$, we define the cut-off function
\begin{equation}\label{eq:cut-off}
		\chi_m(x)\coloneqq \big(1-\mathrm{dist}(x,B(\bar x,m))\big)\vee 0,
	\end{equation}
which is a Lipschitz function with bounded support.
Then for any $f\in\Lip_b(X)$, $f_m\coloneqq f\cdot \chi_m\in D(\ch)\cap \Lip_b(X)$ for all $m\in\N$.
Thus applying the classical gradient estimate \cite[Theorem 4.1]{Savare14} to $f_m$ yields that 
\begin{align}\label{ineq:15/07/25-1}
	\lip P_tf_m\leq e^{-Kt}P_t(|\nabla f_m|)
	\end{align}
holds pointwisely on $X$.
At each $x\in X$, using \eqref{ineq:15/07/25-1} and the estimate $(G'_1)$ for the Lipschitz function $f(1-\chi_m)$ with the locality of weak upper gradient gives
\begin{align}
	\lip P_tf&\leq \lip P_t f_m+\lip P_t (f (1-\chi_m))\\
	&\leq e^{-Kt}P_t(|\nabla f_m|)+e^{-Kt}P_t (\lip f(1-\chi_m))\\
	&\leq e^{-Kt}P_t (|\nabla f|\chi_m)+e^{-Kt}P_t\left[2f|\nabla \chi_m|+(1-\chi_m)\lip f \right].
\end{align}
Since $f,\lip f\in L^\infty(X,\m)$, applying the dominated convergence theorem with $m\to \infty$ concludes the inequality \eqref{ineq:lipLip}.
\smallskip

\textbf{Step 3}:  \eqref{ineq:lipLip} for Lipschitz functions.

For any $f\in\Lip(X)$ and each $n\in\N$, we define the truncation functions of $f$ by 
\[
f_n\coloneqq f\wedge n\vee (-n), \quad f^+_n\coloneqq f\vee n,\quad f^-_n\coloneqq f\wedge (-n).
\]
For all $x,y\in X$, we have
\begin{align}
P_tf(x)-P_tf(y)&=\int_{\{f>n\}}+\int_{\{|f|\leq n\}}+\int_{\{f<-n\}} (p_t(x,z)-p_t(y,z))f(z)\d \m(z)\\
&=P_tf^+_n(x)-P_tf^+_n(y)+P_tf_n(x)-P_tf_n(y)+P_tf^-_n(x)-P_tf^-_n(y),
\end{align}
which in particular implies
\begin{equation}\label{ineq:splitlip}
	\lip P_tf\leq \lip P_tf_n+\lip P_tf^+_n+\lip P_t f^-_n.
\end{equation}
Applying \eqref{ineq:lipLip} to the bounded Lipschitz function $f_n$ and $(G'_1)$ to Lipschitz functions $f^\pm_n$ implies that
\begin{align}
		\lip P_tf\leq e^{-Kt }P_t |\nabla f|+e^{-Kt}P_t \left( \lip f_n^-+\lip f^+_n  \right)
\end{align}
holds everywhere on $X$ for all $n\in\N$.
Since $\lip f^\pm_n$ are uniformly bounded and monotone decreasing to $0$ as $n\to \infty$, taking $n$ to $\infty$ concludes \eqref{ineq:lipLip} for $f$, and \eqref{ineq:G_p} follows together with the almost everywhere comparison $|\nabla P_t f|\leq \lip P_t f$.
\end{proof}

\begin{lemma}\label{lemma:Sobolev-Lip}
	Let $(X,d,\m)$ be an $\rcd$ mms.
	Let $f\colon X\to \R$ be a measurable function admitting a weak upper gradient $g\in L^\infty(X,\m)$.
	Then $f$ has a Lipschitz representative $\tilde f$ with $\Lip(\tilde f)\leq \|g\|_{L^\infty}$.
\end{lemma}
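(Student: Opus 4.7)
The lemma is the Sobolev-to-Lipschitz property of $\rcd$ spaces; I would establish it by approximating $f$ via heat flow and exploiting the gradient estimate \eqref{ineq:lipLip} from \cref{prop:Gradientestimate}.

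\textbf{Reduction to bounded $f$ and local Sobolev regularity.} Since post-composition with the $1$-Lipschitz truncation $s\mapsto (s\wedge n)\vee (-n)$ preserves the class of weak upper gradients, $g$ is a weak upper gradient of $f_n\coloneqq (f\wedge n)\vee(-n)$; obtaining Lipschitz representatives $\tilde f_n$ with $\Lip(\tilde f_n)\leq \|g\|_{L^\infty}$ yields the result for $f$ by a diagonal/locally uniform limiting argument, since $f_n\to f$ $\m$-a.e. Thus I may assume $f$ is bounded. Using the cut-off $\chi_m$ from \eqref{eq:cut-off}, the product $f\chi_m$ is bounded with compact support, and the Leibniz rule in metric Sobolev theory provides $|\nabla(f\chi_m)|\leq \chi_m g+\|f\|_{L^\infty}|\nabla \chi_m|\in L^2\cap L^\infty$; hence $f\chi_m\in D(\ch)$ and $f\in S^2_{\mathrm{loc}}(X,d,\m)$ with $|\nabla f|\leq g$ $\m$-a.e.

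\textbf{Heat-flow regularisation.} The pointwise heat flow $P_t f(x)\coloneqq\int f(y) p_t(x,y)\d\m(y)$ is well-defined and bounded by $\|f\|_{L^\infty}$. Combining a $W^{1,2}$-approximation of $f\chi_m$ by bounded Lipschitz functions with the classical gradient estimate used in Step~2 of the proof of \cref{prop:Gradientestimate} yields, pointwise on $X$,
\[
\lip P_t(f\chi_m)\leq e^{-Kt}\bigl(\|g\|_{L^\infty}P_t\chi_m+\|f\|_{L^\infty}P_t|\nabla \chi_m|\bigr).
\]
By dominated convergence, $P_t(f\chi_m)\to P_t f$ pointwise as $m\to\infty$, while $P_t\chi_m\to 1$ and $P_t|\nabla\chi_m|\to 0$ locally uniformly---the latter because $|\nabla\chi_m|$ is supported in the annulus $B(\bar x,m+1)\setminus B(\bar x,m)$ and the heat kernel has the exponential integrability provided by \cref{thm:weightedInt}. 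Consequently $\Lip(P_t f)\leq e^{-Kt}\|g\|_{L^\infty}$.

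\textbf{Passing to the limit.} The family $\{P_t f\}_{t\in(0,1]}$ is equi-bounded and equi-Lipschitz, so by Arzela--Ascoli there exists $t_k\downarrow 0$ with $P_{t_k}f\to\tilde f$ locally uniformly, where $\tilde f\in \Lip(X)$ and $\Lip(\tilde f)\leq \|g\|_{L^\infty}$. To identify $\tilde f=f$ $\m$-a.e., observe that $P_t(f\chi_m)\to f\chi_m$ in $L^2$ as $t\to 0$ by the $L^2$-gradient-flow theory of the heat semigroup, while $|P_t f-P_t(f\chi_m)|\leq \|f\|_{L^\infty}P_t(1-\chi_m)\to 0$ locally uniformly as $m\to\infty$ by \cref{thm:weightedInt}; a diagonal subsequence then yields $\tilde f=f$ on an $\m$-full subset of every bounded set.

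\textbf{Main obstacle.} The delicate point is upgrading the classical Lipschitz-input gradient estimate to the functions $f\chi_m\in D(\ch)\cap L^\infty$ and then transferring the resulting Lipschitz bound to $f$ itself in the limit $m\to\infty$ without any global integrability assumption. Both steps rely on the cut-off construction combined with the exponential heat-kernel integrability of \cref{thm:weightedInt}, which provides the locally uniform control of the error term $P_t|\nabla\chi_m|$ indispensable for the passage $m\to\infty$.
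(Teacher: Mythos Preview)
Your heat-flow strategy is a natural alternative to the paper's, but the step ``combining a $W^{1,2}$-approximation of $f\chi_m$ by bounded Lipschitz functions with the classical gradient estimate'' does not deliver the displayed pointwise bound. The estimate you invoke from Step~2 of \cref{prop:Gradientestimate} (namely \cite[Theorem~4.1]{Savare14}) requires a Lipschitz input. Given approximants $h_k\in\Lipb\cap D(\ch)$ with $h_k\to f\chi_m$ in $W^{1,2}$, one only has $|\nabla h_k|\to|\nabla(f\chi_m)|$ in $L^2$, with no uniform $L^\infty$ control; the pointwise inequalities $\lip P_t h_k\leq e^{-Kt}P_t|\nabla h_k|$ therefore cannot be passed to the limit to a \emph{pointwise} bound on $\lip P_t(f\chi_m)$. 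The missing ingredient---a uniform Lipschitz bound on the $h_k$, or equivalently the statement that $f\chi_m$ itself has a Lipschitz representative---is precisely the Sobolev-to-Lipschitz property you are trying to establish, so the argument as written is circular.

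The paper avoids this by quoting \cite[Theorem~6.2]{AGSRCD2014} (Sobolev-to-Lipschitz for $D(\ch)$ functions) as a black box: this yields that $f_{n,m}=\chi_m f_n$ is Lipschitz with the \emph{crude} constant $L_n=n+\|g\|_{L^\infty}$, hence so is $f_n$. The sharp bound $\Lip(f_n)\leq\|g\|_{L^\infty}$ is then obtained by a direct test-plan argument: one integrates the weak upper gradient $g$ along an optimal dynamical plan between uniform measures on small balls $B(x_0,r)$, $B(x_1,r)$ and lets $r\to 0$, using the already-established continuity of $f_n$ to identify the limits of the ball averages with $f_n(x_0),f_n(x_1)$. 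If you patch your proof by likewise citing \cite[Theorem~6.2]{AGSRCD2014} to make $f\chi_m$ Lipschitz first, then \cref{prop:Gradientestimate} applies to it directly and your heat-flow/Arzel\`a--Ascoli route does give the sharp constant without any test-plan step. So, modulo this fix, the two arguments are genuinely different: the paper's sharpening is more elementary (it uses only the definition of weak upper gradient once continuity is known), while yours exploits the full strength of the semigroup gradient estimate.
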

\begin{proof}
	For each $m\in\N$ and $\bar x\in X$, consider the cut-off function $\chi_m$ on $X$ given by \eqref{eq:cut-off} and for all $n\in\N$ we define the truncation functions of $f$ by 
	\[
f_n\coloneqq f\wedge n\vee (-n),\quad 	f_{n,m}\coloneqq \chi_m\cdot f_n
	\]
	Then by the locality and the Leibniz rule of weak upper gradient, for all $n,m$ the function $f_{n,m}\in D(\ch)$, and admits a weak upper gradient given by
	\[
	|\nabla\chi_m ||f_n|+\chi_m g\leq n+\|g\|_{L^\infty}\eqqcolon L_n.
	\]
	Applying \cite[Theorem 6.2]{AGSRCD2014} shows for all $n,m\in\N$ that $f_{n,m}$ is $L_n$-Lipschitz up to changing to a continuous representative. 
	The same has to hold also for $f_n$ since $\chi_m\nearrow 1$ as $m\to\infty$.
	
	Now we claim that for all $n$, $\Lip(f_n)\leq \|g\|_{L^\infty}$, and thus passing $n$ to $\infty$ yields that $f$ is Lipschitz with $\mathrm{Lip}(f)\leq \|g\|_{L^\infty}$.
	
	For any $x_0,x_1\in X$ and $r>0$, consider $\m^i_{r}\coloneqq \frac{\m\llcorner B(x_i,r)}{\m(B(x_i,r))}$ for $i=0,1$ and $\pi_r$ the optimal dynamical plan between $\m^0_r $ and $\m^1_r$.
	Then take $m$ sufficiently large to contain all curves in $\spt(\pi)$ and use the weak upper gradient estimate 
	\begin{align}
	\left|	\aint{B(x_0,r)}f_n-	\aint{B(x_1,r)}f_n\right|&=	\left|	\aint{B(x_0,r)}f_n\chi_m-	\aint{B(x_1,r)}f_n\chi_m\right|\\
	&\leq \int \int^1_0 g(\gamma^a)|\dot\gamma^a|\d a\d\pi_r(\gamma)\\
	&\leq \|g\|_{L^\infty}(d(x_0,x_1)+2r).
	\end{align}
As it is known that $f_n$ is Lipschitz, the integral averages converge when $r$ goes to $0$. 
Then the claim follows.
\end{proof}

\subsection{Splitting spaces via vanishing Hessian}
For a smooth manifold, the existence of a non-constant smooth function whose Hessian vanishes everywhere implies that the space splits.
\begin{lemma}\label{lemma:Euclidean}
	Let $(M,g)$ be a complete manifold, and let $\phi$ be a non-constant function on $M$. The following are equivalent:
	\begin{enumerate}
		\item[(1)] $M$ is a direct product of $\R$ and a complete Riemannian manifold $N$, and there exist $a,b\in\R$ s.t. $\phi(t,x)=at+b$ for all $(t,x)\in \R\times N$;
		\item[(2)] $\phi$ is smooth and $\nabla^2 \phi\equiv 0$ on $M$;
		\item[(3)] $\phi$ is linear along each geodesic.
	\end{enumerate}
\end{lemma}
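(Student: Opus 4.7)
The plan is to establish the equivalence by a cycle $(1) \Rightarrow (2) \Rightarrow (3) \Rightarrow (1)$. The implications $(1) \Rightarrow (2)$ and $(2) \Rightarrow (3)$ are computational: on the product $\R \times N$ the function $\phi(t,x) = at+b$ is clearly $C^\infty$ with vanishing Hessian, while for any geodesic $\gamma$ on any Riemannian manifold the identity $\tfrac{d^2}{dt^2}(\phi \circ \gamma)(t) = \nabla^2\phi(\dot\gamma, \dot\gamma) = 0$ forces $\phi \circ \gamma$ to be an affine function of $t$. So the substance of the lemma lies in $(3) \Rightarrow (2)$ (promoting geodesic-affineness to smoothness and vanishing Hessian) and $(2) \Rightarrow (1)$ (the splitting itself).

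For $(3) \Rightarrow (2)$, I would first extract a candidate differential at each $p \in M$ by setting $c_p(v) := \tfrac{d}{dt}\phi(\exp_p(tv))|_{t=0^+}$, which is well-defined by condition (3) and, by a reparametrisation argument on the geodesic $t \mapsto \exp_p(\lambda t v)$, is positively homogeneous of degree one in $v$. To upgrade homogeneity to linearity, I would consider the two-parameter variation $(s,t) \mapsto \exp_p(t(v+su))$, apply (3) along each of these geodesics to get $\phi(\exp_p(t(v+su))) = \phi(p) + t\,c_p(v+su)$, and compare the $s$-derivative at $s=0$ with a Jacobi-field expansion along the base geodesic; this identifies $c_p$ with a linear functional on $T_pM$ and exhibits $\phi$, in normal coordinates around $p$, as the affine function $q \mapsto \phi(p) + c_p(\exp_p^{-1} q)$. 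This yields $\phi \in C^\infty$ locally; differentiating $\phi \circ \gamma$ twice along every geodesic and polarising then gives $\nabla^2 \phi \equiv 0$.

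The key structural step is $(2) \Rightarrow (1)$. Since $\nabla^2\phi \equiv 0$, the vector field $V \coloneqq \nabla\phi$ is parallel, so $|V| \equiv a$ is constant; non-constancy of $\phi$ forces $a>0$. The unit parallel field $V/a$ then generates a flow $(\psi_t)_{t\in\R}$ defined for all times (by completeness) consisting of isometries (since parallel fields are Killing), whose orbits are unit-speed geodesics. Fixing $c_0 \in \phi(M)$, the level set $N \coloneqq \phi^{-1}(c_0)$ is a smooth embedded hypersurface (as $\nabla\phi$ does not vanish) which is totally geodesic (its tangent bundle $V^\perp$ is the orthogonal complement of a parallel line field). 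I then define
\begin{equation}
	F \colon \R \times N \to M, \qquad F(t,x) \coloneqq \psi_t(x),
\end{equation}
and check the three properties. Injectivity follows from $\phi(F(t,x)) = c_0 + at$, which determines $t$ uniquely and then $x$ by invertibility of $\psi_t$. Surjectivity follows by flowing any $q\in M$ back to $N$ using the time $(c_0 - \phi(q))/a$. For the Riemannian structure, parallelism of $V$ implies that the orthogonal decomposition $TM = \R V \oplus V^\perp$ is preserved by $\psi_t$, the factor $V/a$ is unit and orthogonal to $N$, hence $F^* g = dt^2 + g|_N$. Completeness of $(N,g|_N)$ is automatic by total geodesicity inside the complete manifold $M$. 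Finally $\phi \circ F(t,x) = c_0 + at$ is affine in the $\R$-factor and constant in the $N$-factor, closing the cycle.

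I expect the main obstacle to be the regularity upgrade in $(3) \Rightarrow (2)$: condition (3) is a priori only an affinity statement along individual curves without any joint regularity in the base point, so extracting \emph{linearity} (rather than mere positive homogeneity) of the directional-derivative map, and thereby smoothness of $\phi$, is the delicate part, and requires combining the geodesic variation argument with the Jacobi-field identity. Once this is in place, the parallel-gradient splitting in $(2) \Rightarrow (1)$ is essentially a de Rham-type construction and proceeds without further difficulty.
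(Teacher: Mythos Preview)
Your cycle $(1)\Rightarrow(2)\Rightarrow(3)$ is fine and matches the paper. Your argument for $(2)\Rightarrow(1)$ via the parallel gradient field and the de Rham--type splitting is correct and in fact more self-contained than the paper's route, which invokes the warped-product classification of \cite{Catino-Mantegazza-Lorenzo} to write $M=\R\times_{\phi'}\Sigma$ for a level set $\Sigma$ and then reads off $\phi''=0$ from $\nabla^2\phi(\partial_r,\partial_r)=0$. Your direct construction of the isometry $F(t,x)=\psi_t(x)$ and the product metric check are clean and avoid the external reference.

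The real issue is your $(3)\Rightarrow(2)$. The Jacobi-field step is circular: to ``compare the $s$-derivative at $s=0$'' of $\phi(\exp_p(t(v+su)))$ with a Jacobi-field expansion, you must differentiate $\phi$ along the curve $s\mapsto\exp_p(t(v+su))$, which is \emph{not} a geodesic, so condition (3) gives you nothing there. At this stage you do not know $\phi$ is even continuous, let alone differentiable, so the comparison has no content. The homogeneity $c_p(\lambda v)=\lambda c_p(v)$ you extract is genuine, but upgrading it to additivity requires exactly the regularity you are trying to prove; on a curved manifold the midpoint of the geodesic from $\exp_p(v)$ to $\exp_p(u)$ is not $\exp_p(\tfrac{v+u}{2})$, and absorbing the $O(|v|^2+|u|^2)$ error needs continuity of $c_p$, which you also do not have.

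The paper bypasses this entirely. Since both $\phi$ and $-\phi$ are geodesically convex, Alexandrov's theorem (Villani, Theorem 14.1) yields a.e.\ second-order differentiability with pointwise Hessian $H_x$; plugging the exact linearity $\phi(\exp_x(tv))=\phi(x)+c_vt$ into the Alexandrov expansion forces $H_x=0$ a.e., so the absolutely continuous part of the distributional Laplacian vanishes. Convexity makes the singular part $\geq 0$, and applying the same to $-\phi$ makes it $\leq 0$; hence $\Delta\phi=0$ distributionally, and Weyl's lemma gives $\phi\in C^\infty$. Only then does one read off $\nabla^2\phi\equiv 0$. This Alexandrov--Weyl mechanism is the missing idea in your $(3)\Rightarrow(2)$.
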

\begin{proof}
	The implication from (1) to both (2) and (3) is trivial.
	The direction from (2) to (1) follows from \cite{Catino-Mantegazza-Lorenzo}.
	It is worth mentioning that we do not need any curvature assumption here.
	Since $\nabla^2\phi$ is everywhere vanishing and $\phi$ is non-constant, $\phi$ has no critical point and $(M,g)$ is non-compact.
	Otherwise, $\phi$ is harmonic and has to be constant by Liouville's theorem.
	By the same argument of Theorem 1.1 of \cite{Catino-Mantegazza-Lorenzo}, given $\Sigma$ any regular level set of $\phi$, $\phi$ depends only on the signed distance to $\Sigma$ and $M$ is a warped product $\R\times_{\phi'}\Sigma$ i.e.
	\[
	g=\d r\otimes \d r+(\phi')^2g_{\Sigma}.
	\]
	Recall that $\nabla^2\phi=\phi''\d r\otimes \d r+\phi'\nabla^2 r$ and $\nabla^2 r(\partial_r,\partial_r)=0$. 
	Hence by assumption 
	\[
	0\equiv\nabla^2\phi(\partial_r,\partial_r)=\phi'' .
	\]
	In other words, $\phi$ is linear and $M$ is a direct product of $\R$ and $\Sigma$.
	
	Now we show the implication (3)$\Rightarrow$(2).
	If $\phi$ is linear along each geodesic, it is in particular geodesically convex. 
	Then by Alexandrov’s theorem, see e.g. \cite[Theorem 14.1]{Villani}, for a.e. $x\in M$, $\phi$ is differentiable and there exists a symmetric operator $H_x\colon T_xM\to T_xM$ s.t. 
	\begin{equation}\label{eq:alexandrov}
		\phi(\exp_x(v))=\phi(x)+\langle\nabla \phi(x),v\rangle+\frac{\langle H_xv,v\rangle}{2}+o(\|v\|^2)
	\end{equation}
	as $v\to 0$. Now for any $0\neq v\in T_x M$ since $\phi$ is linear along $t\mapsto \exp_x(tv)$, there is $c_v\in\R$ s.t. $\phi(\exp_x(tv))=\phi(x)+c_vt$ for all $t $ small.
	Plugging this into \eqref{eq:alexandrov} gives
	\[
	c_v t=t\langle \nabla \phi(x),v\rangle+\frac{t^2}{2}\langle H_x v,v\rangle +o(t^2)
	\]
	which forces $c_v=\langle \nabla \phi(x),v\rangle$ and $\langle H_x v,v\rangle =0$. 
	As $v$ is arbitrary, $H_x=0$ and its trace $\Delta \phi(x)=0$ for a.e. $x\in M$. 
	
	Again by Alexandrov’s theorem, the absolutely continuous part of the distributional Laplacian of $\phi$ is $0$, whose singular part is non-negative.
	Notice that $-\phi$ is also linear along each geodesic. Applying the previous argument to $-\phi$ yields that $\phi$ is a distributional solution to the Laplace equation. 
	By Weyl's lemma, $\phi$ is smooth. With this regularity we conclude $\nabla^2 \phi\equiv 0$ on $M$.
\end{proof}

\begin{remark}\label{remark:linear}
	Recall that a convex function on $\R$ admits left and right derivatives, which are non-decreasing. 
	Therefore, in \cref{lemma:Euclidean} if we know $\phi$ is geodesically convex and continuous, the linearity in (3) can be relaxed as follows: for every geodesic $(\gamma^a)_{a\in[0,1]}$, 
	\begin{align}
		\liminf_{h\searrow 0}\frac1h (\phi(\gamma^h)-\phi(\gamma^0))\geq 	\limsup_{h\searrow 0}\frac1h(\phi(\gamma^1)-\phi(\gamma^{1-h}).
	\end{align} 
\end{remark}

When considering weighted manifolds, if additional curvature assumptions are imposed, we can establish the splitting for the metric-measure structure.
\begin{proposition}\label{prop:splitmesaure}
	Let $(M,g,\m)$ be a weighted complete manifold with $\m\coloneqq e^{-f}\mathrm{vol}_g$ and $\Ric_{\m}\geq K$.
	Let $\phi$ be a non-constant smooth function on $M$ that satisfies
	\[
	\Ric_{\m}(\nabla\phi)\equiv K|\nabla\phi|^2,\quad \nabla^2\phi\equiv0
	\]
	on $M$. 
	Then there exist a weighted complete Riemannian manifold $(N,g_N,\n)$ and functions $f_1\colon \R\to \R$ and $f_2\colon N\to \R$ s.t. 
	\begin{equation}
		(M,g,\m)=(\R,|\cdot|,e^{-f_1}\mathcal{L}^1)\times (N,g_N,\n),\quad \n\coloneqq e^{-f_2}\mathrm{vol}_{g_N}.
	\end{equation}
	Moreover, $f_1''\equiv K$ on $\R$ and $\Ric_{\n}\coloneqq \Ric_{g_N}+\nabla^2 f_2\geq K g_N$.
\end{proposition}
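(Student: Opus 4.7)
The plan is to first use Lemma~\ref{lemma:Euclidean} to obtain the Riemannian splitting and then upgrade it to a splitting of the weight $f$ by combining the curvature bound $\Ric_\m \geq K$ with the hypothesis $\Ric_\m(\nabla\phi) \equiv K|\nabla\phi|^2$.

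\emph{Step 1: Riemannian splitting and normalisation.} Since $\phi$ is smooth, non-constant, and $\nabla^2 \phi \equiv 0$, Lemma~\ref{lemma:Euclidean} applied to $(M,g)$ produces a complete manifold $N$ and an isometry $M \cong \R \times N$ under which $\phi(t,x) = at+b$ for some $a \neq 0$, $b\in\R$. Rescaling the $\R$-factor if necessary, I assume $a=1$, so that $\nabla\phi = \partial_t$ and $|\nabla\phi|\equiv 1$. In the product metric, $\Ric_g(\partial_t,\cdot) \equiv 0$ and the Levi-Civita connection satisfies $\nabla^M_{\partial_t} V = 0$ for every vector field $V$ that is a lift from $N$.

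\emph{Step 2: Splitting of the weight.} Since $\Ric_g(\partial_t,\partial_t)=0$, the identity $\Ric_\m = \Ric_g + \nabla^2 f$ combined with the hypothesis $\Ric_\m(\nabla\phi,\nabla\phi) = K|\nabla\phi|^2$ gives $\nabla^2 f(\partial_t,\partial_t) = K$. The symmetric bilinear form $B := \Ric_\m - Kg$ is positive semidefinite by assumption, and $B(\partial_t,\partial_t)=0$. By Cauchy--Schwarz for semidefinite forms, $|B(\partial_t,v)|^2 \leq B(\partial_t,\partial_t) B(v,v) = 0$ for every tangent vector $v$, hence $B(\partial_t,v) \equiv 0$. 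Using $\Ric_g(\partial_t,v)=0$ again, this yields
\[
\nabla^2 f(\partial_t,v) = 0 \quad \text{for all } v \in TM.
\]
For a vector field $V$ on $N$ lifted to $M$, $\nabla^2 f(\partial_t,V) = \partial_t(V(f)) - (\nabla^M_{\partial_t} V)(f) = \partial_t(V(f))$, so $V(f)$ is independent of $t$. Equivalently, $\partial_t f$ depends only on $t$: writing $\partial_t f = f_1'(t)$, I obtain $f(t,x) = f_1(t) + f_2(x)$ for a smooth $f_2 \colon N \to \R$.

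\emph{Step 3: Identification and final curvature check.} From $\nabla^2 f(\partial_t,\partial_t) = f_1''(t) = K$, the one-dimensional factor has the prescribed weight. The product decomposition of the volume measure then gives $\m = e^{-f_1}\mathcal{L}^1 \otimes e^{-f_2}\mathrm{vol}_{g_N}$, proving the splitting with $\n := e^{-f_2}\mathrm{vol}_{g_N}$. Finally, for vectors $v,w$ tangent to $N$ the product structure yields $\Ric_g(v,w) = \Ric_{g_N}(v,w)$ and $\nabla^2 f(v,w) = \nabla^2 f_2(v,w)$, so $\Ric_\n(v,w) = \Ric_\m(v,w) \geq K g(v,w) = K g_N(v,w)$.

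The main subtlety is Step~2, where one has to make sure that the nonnegativity of $\Ric_\m - Kg$ together with the vanishing in the $\partial_t$ direction forces the mixed Hessian terms $\nabla^2 f(\partial_t,\cdot)$ to vanish; this is the Cauchy--Schwarz principle for positive semidefinite symmetric forms. The rest is a routine computation in the product geometry.
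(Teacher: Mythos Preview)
Your proof is correct and follows essentially the same route as the paper: first apply \cref{lemma:Euclidean} to split the metric, then use that $\Ric_\m - Kg$ is positive semidefinite and vanishes in the $\partial_t$ direction to kill the mixed terms $\nabla^2 f(\partial_t,\cdot)$, and conclude that $f$ splits with $f_1''\equiv K$ and $\Ric_\n\geq K$. Your Cauchy--Schwarz formulation of the key step is exactly the paper's quadratic-in-$\lambda$ argument, and you spell out the splitting of $f$ a bit more explicitly than the paper does.
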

\begin{proof}
	By \cref{lemma:Euclidean}, the metric space $(M,g)$ splits as a direct product $\R\times N$ for some complete manifold $(N,g_N)$, and $\phi$ depends only on the first coordinate i.e. there exists a function $\phi_1\colon\R\to \R$ s.t. for any $(r,w)\in M=\R\times N$, $\phi(r,w)=\phi_1(r)$.
	Denote $\partial_r=\nabla\phi$. Then $\Ric(\partial_r )=0$ by the splitting of the metric.
	By assumption
	\begin{equation}\label{eq:partial_r}
		\nabla^2 f(\partial_r,\partial_r)=\Ric_{\m}(\partial_r,\partial_r)-\Ric(\partial_r,\partial_r)=K g(\partial_r,\partial_r).
	\end{equation}
	For any $x=(r,w)\in M$ and $v\in T_w N$ denote $\tilde v=(0,v)\in T_xM$. 
	Then the condition
	\[
	\Ric_\m(\partial_r+\lambda \tilde v)\geq Kg(\partial_r+\lambda \tilde v,\partial_r+\lambda \tilde v),\quad \forall \lambda\in\R
	\]
	with \eqref{eq:partial_r} and the splitting of $g$, implies 
	\begin{align}
		\lambda^2\left[(\Ric_{g_N}-Kg_N)(v,v)+\nabla^2 f(\tilde v,\tilde v) \right]+2\lambda \nabla^2f (\partial_r ,\tilde v)\geq 0
	\end{align}
	for all $\lambda\in\R$. This forces 
	\[
	\Ric_{g_N}(v,v)+ \nabla^2 f(\tilde v,\tilde v)\geq Kg_N(v,v), \quad \nabla^2 f(\partial_r, \tilde v)=0
	\]
	for all $v$ and $x\in M$.
	The above demands $f$ to split, $\Ric_n\geq K$ and $f''\equiv K$ restricting on $\R$.
\end{proof}


\section{Sharp gradient estimates and sharp Wasserstein contractions}
In this section, we aim to establish the equivalence between the cases in which equality is attained in the Wasserstein contractions and in the Bakry--Émery gradient estimates. 
To this end, we introduce the notion of sharpness conditions for both types of estimates, for general exponents $p \in [1, \infty]$.

\begin{definition}\label{def:sharp}
	Let $(X,d,\m)$ be an $\rcd(K,\infty)$ mms. For $p\in[1,\infty]$, we say that $(X,d,\m)$ attains
	\begin{enumerate}
		\item \emph{sharp $p$-Wasserstein contraction}, denoted by $(SC^K_p)$ for short, if there exist $x\neq y\in X$ and $t>0$ s.t. 
		\begin{equation}\label{eq:SCp}
			W_p(p_t(x,\d\m),p_t(y,\d\m))=e^{-Kt}d(x,y);
		\end{equation}
		\item \emph{sharp \(p\)-gradient estimate}, denoted by \((SG^K_{p})\), if there exist \(t > 0\) and a non-constant function \(f\in \Lip(X)\) s.t. the following holds $\m$-a.e.
		\begin{align}
			|\nabla P_t f| &= e^{-Kt}\big(P_t(|\nabla f|^p)\big)^{1/p}, && p\in[1,\infty)\label{ineq:SG^K_p}\\
			|\nabla P_t f|& = e^{-Kt}\|\nabla f\|_{L^\infty},&& p=\infty.\label{eq:SG_infty}
		\end{align}
	\end{enumerate}
\end{definition}

\begin{lemma}\label{lemma:sharpconditions}
		Let $(X,d,\m)$ be an $\rcd(K,\infty)$ mms for $K\in\R$. Then for all $p\in(1,\infty)$
		\begin{enumerate}
			\item\label{item1} $(SC^K_1)\Rightarrow (SC^K_p)\Rightarrow (SC^K_\infty)$;
		\item\label{item2} $(SG^K_\infty)\Leftrightarrow (SG^K_p)\Rightarrow (SG^K_1)$;
		\item $(SG^K_p)\Leftrightarrow$ $\exists f\in \Lip(X)$ which is non-constant and attains equality in \eqref{ineq:G'_p} at some $x\in X$.
		\end{enumerate}
\end{lemma}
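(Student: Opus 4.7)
For part (1), the plan is to combine the monotonicity $W_p \leq W_q$ for $1 \leq p \leq q \leq \infty$ (a direct consequence of H\"older's inequality applied to any optimal coupling) with the $W_q$-contraction $W_q(p_t(x,\cdot)\m, p_t(y,\cdot)\m) \leq e^{-Kt}d(x,y)$, which is known on every $\rcd(K,\infty)$ mms and every $q\in[1,\infty]$ by \cite{Kuwadaduality, Savare14}. Sandwiching will then give $(SC^K_p) \Rightarrow (SC^K_q)$ for all $q \geq p$, which is the content of (1).

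For part (2), I would start from the $\m$-a.e.\ chain
\[
|\nabla P_tf| \leq e^{-Kt}\big(P_t|\nabla f|^p\big)^{1/p} \leq e^{-Kt}\big(P_t|\nabla f|^q\big)^{1/q} \leq e^{-Kt}\|\nabla f\|_{L^\infty}
\]
for $1 \leq p \leq q < \infty$, obtained by combining the extended gradient estimates of \cref{prop:Gradientestimate} with Jensen's inequality applied to the probability density $p_t(x,\cdot)$. Sharpness at either endpoint will collapse every intermediate inequality to an $\m$-a.e.\ equality, and the saturated Jensen step---together with the strict positivity of the heat kernel coming from the log-Harnack inequality \eqref{ineq:LHI}---forces $|\nabla f|$ to be $\m$-a.e.\ constant on $X$. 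Once this constancy is established, all intermediate terms coincide and every $(SG^K_q)$ for $q \in [1,\infty]$ is attained by the same $f$, yielding the two equivalences as well as the implication into $(SG^K_1)$.

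For part (3) I would rely on the identity $|\nabla f| = \lip f$ $\m$-a.e.\ for Lipschitz $f$ on $\rcd$ spaces, a standard consequence of the Sobolev-to-Lipschitz property (\cref{lemma:Sobolev-Lip}) together with the results of \cite{AGSRCD2014}; the same identity applies to $P_tf$. The forward direction is then immediate: these identities convert the $\m$-a.e.\ equality in $(G_p)$ provided by $(SG^K_p)$ directly into $\m$-a.e.\ equality in $(G'_p)$, so one can pick a point at which $(G'_p)$ is sharp.

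The main obstacle will be the converse of part (3), in which a pointwise equality at a single $x_0$ must be upgraded to an $\m$-a.e.\ equality. The plan is to exploit the semigroup identity $P_tf = P_{t/2}(P_{t/2}f)$ together with the pointwise estimate \eqref{ineq:lipLip}. Setting $g := P_{t/2}f$, iterating \eqref{ineq:lipLip} twice and applying Jensen should yield the pointwise chain
\[
\lip P_tf \leq e^{-Kt/2}P_{t/2}|\lip g| \leq e^{-Kt}P_t|\lip f| \leq e^{-Kt}\big(P_t|\lip f|^p\big)^{1/p}.
\]
Equality at $x_0$ in $(G'_p)$ then forces every intermediate inequality to be an equality at $x_0$. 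In particular, the middle identity $P_{t/2}\big[|\lip g| - e^{-Kt/2}P_{t/2}|\lip f|\big](x_0) = 0$, combined with the pointwise non-positivity of the integrand and the $\m$-a.e.\ positivity of $p_{t/2}(x_0,\cdot)$ supplied by \eqref{ineq:LHI}, promotes it to $|\lip g| = e^{-Kt/2}P_{t/2}|\lip f|$ $\m$-a.e.---which is exactly $(SG^K_1)$ for $f$ at time $t/2$. For $p \in (1,\infty]$ the Jensen step further forces $|\lip f|$ to be $\m$-a.e.\ constant, after which part (2) automatically promotes $(SG^K_1)$ to $(SG^K_p)$; the case $p = \infty$ is handled symmetrically by replacing the last Jensen bound with $P_t|\lip f| \leq \|\lip f\|_{L^\infty} = \Lip(f)$.
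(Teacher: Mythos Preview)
Parts (1) and (2) follow the paper's argument essentially verbatim: H\"older's inequality for the monotonicity of Wasserstein distances in (1), and the chain of Jensen/H\"older inequalities collapsing under the sharpness assumption in (2).

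For part (3) there is one gap and one genuine difference worth recording. The gap: you invoke the identity $|\nabla f|=\lip f$ $\m$-a.e.\ for Lipschitz $f$ as ``a standard consequence of the Sobolev-to-Lipschitz property'', but \cref{lemma:Sobolev-Lip} only yields $\Lip(f)\le \||\nabla f|\|_{L^\infty}$, not the pointwise a.e.\ identification; on general $\rcd(K,\infty)$ spaces without doubling and Poincar\'e this identification is not elementary, and the paper deliberately avoids assuming it. The paper instead \emph{derives} $|\nabla f|=\lip f$ in the situation at hand: once H\"older equality forces $\lip f=c$ a.e.\ and $\lip P_tf(x_0)=e^{-Kt}c$, the chain $\lip P_tf(x_0)\le e^{-Kt}P_t(|\nabla f|)(x_0)\le e^{-Kt}P_t(\lip f)(x_0)$ from \eqref{ineq:lipLip} collapses and the strict positivity of the heat kernel forces $|\nabla f|=\lip f$ a.e. Your own chain is easily repaired the same way: write it with $|\nabla g|$ and $|\nabla f|$ in the intermediate positions (which is what \eqref{ineq:lipLip} actually provides) rather than $\lip g$ and $\lip f$, and both identifications $|\nabla g|=\lip g$ and $|\nabla f|=\lip f$ fall out as by-products of the collapsed inequalities, with no appeal to a general identification theorem.

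The genuine difference is your semigroup splitting $t=t/2+t/2$ in the converse of (3). The paper works entirely at time $t$: it extracts $|\nabla f|=\lip f=c$ a.e.\ from the single-point equality and then closes with a terse reference to Sobolev-to-Lipschitz. Your route inserts the intermediate function $g=P_{t/2}f$ and uses the equality $P_{t/2}\bigl[\lip g-e^{-Kt/2}P_{t/2}|\nabla f|\bigr](x_0)=0$ together with the sign of the integrand to upgrade the single-point information to the $\m$-a.e.\ statement $|\nabla P_{t/2}f|=e^{-Kt/2}c$, i.e.\ $(SG^K_\infty)$ at time $t/2$, after which part (2) delivers $(SG^K_p)$. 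This makes the passage from ``equality at one point'' to ``equality $\m$-a.e.'' completely explicit, at the cost of slightly more bookkeeping; the paper's argument is shorter but leaves that passage less transparent.
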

\begin{proof}
	\cref{item1} follows directly from H\"older's inequality.
	For \cref{item2}, we assume that  \((SG^K_p)\) is attained by $f\in \Lipb(X)$ for some \(p \in (1, \infty)\) and \(t>0\).
	Then it holds by $(G_1)$ and H\"older's inequality that
	\begin{equation}\label{ineq:25/07/25-1}
e^{Kt }|\nabla P_tf|=	\big(P_t(|\nabla f|^p)\big)^{1/p}\geq P_t(|\nabla f|)\geq e^{Kt }|\nabla P_tf|
	\end{equation}
In particular, the Hölder inequality attains equality.
Thus \( |\nabla f| \) is constant almost everywhere, and by the everywhere equality in \eqref{ineq:25/07/25-1}, so is \( |\nabla P_t f| \). This shows \((SG^K_\infty)\) and $(SG^K_1)$.

	The last statement can be proved in a similar way.
	Here we show only the ``$\Leftarrow$" direction.
	
	By $(G'_1)$ and H\"older's inequality, the assumption implies that
	\[
(P_t (\lip f)^p)(x)=|e^{Kt}\lip P_t f|^p (x)\leq (P_t (\lip f))^p(x)\leq (P_t (\lip f)^p)(x).
	\]
	Therefore by the equality case of H\"older's inequality, $\lip f$ is constant a.e.
	Further by \eqref{ineq:lipLip}
\[
\lip P_tf(x)\leq e^{-Kt}P_t(|\nabla f|)(x)\leq e^{-Kt }P_t(\lip f)(x)
\]
	where again equalities are attained everywhere, forcing $|\nabla f|\equiv \lip f$ $\m$-a.e. on $X$.
	The proof is complete by the Sobolev-to-Lipschitz property \cref{lemma:Sobolev-Lip}.
\end{proof}


\subsection{The duality theorem}\label{sec:dualitythm}
The goal of this subsection is to prove the following theorem.
\begin{theorem}\label{thm:dualitysharp}
	Let $(X,d,\m)$ be a proper $\rcd(K,\infty)$ mms for $K\in\R$. Then
	\begin{enumerate}
			\item\label{item4} $(SG^K_\infty)\Rightarrow (SC^K_1)$, provided that the heat flow is ultracontractive. The same implication holds without the ultracontractivity assumption if $X$ is a smooth manifold.
		\item\label{item3} $(SC^K_2)\Rightarrow (SG^K_\infty)$, and the same implication holds without the properness assumption if $\m(X)<\infty$.
	\end{enumerate}
\end{theorem}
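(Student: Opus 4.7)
The plan is to exploit the rigid transport structure forced by the equality assumptions to convert $W_2$-sharpness to sharp gradient estimates, and vice versa. For Part (2), set $\mu := p_t(x_0,\cdot)\m$, $\nu := p_t(y_0,\cdot)\m$ and $L := e^{-Kt}d(x_0, y_0)$. Savaré's $\infty$-Wasserstein contraction combined with the trivial $W_2 \leq W_\infty$ and the hypothesis $W_2(\mu,\nu) = L$ forces $W_2 = W_\infty = L$; consequently every $W_2$-optimal plan $\pi$ has $d(z,w) = L$ $\pi$-a.s.\ by sharpness of Cauchy--Schwarz in $\int d^2\d\pi \leq \|d\|^2_{L^\infty(\pi)}$. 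Applying the metric Brenier theorem of \cite{AGSInvention} to the Kantorovich potential $\phi$ for $\mu \to \nu$ gives $|\nabla\phi|(z) = L$ $\mu$-a.e. Positivity of $p_t(x_0,\cdot)$ via the log-Harnack inequality \eqref{ineq:LHI} (or \eqref{ineq:GaussLB} when $\m(X) < \infty$) promotes this to $\m$-a.e., and \cref{lemma:Sobolev-Lip} yields a Lipschitz representative with $\mathrm{Lip}(\phi) = \|\nabla\phi\|_\infty = L$.

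The core computation is $P_t\phi(x_0) - P_t\phi(y_0) = L^2$. Kuwada's argument with an $L^\infty$-optimal coupling $\sigma$ delivers the upper bound $\leq L \cdot \mathrm{Lip}(\phi) = L^2$. For the matching lower bound, the rigid transport combined with metric Brenier forces $\phi(z) - \phi(w) = L^2$ for $\pi$-a.e.\ $(z,w)$ on the optimal plan (since $|\nabla\phi| \equiv L$ along each transport geodesic, so $\phi$ drops by $L \cdot d(z,w) = L^2$); integrating against $\pi$ and using the marginal property gives the equality. The slope of $P_t\phi$ along the space geodesic $\gamma$ between $x_0$ and $y_0$ is then $L^2/d(x_0, y_0) = e^{-Kt}\|\nabla\phi\|_\infty$, matching the $(G_\infty)$ bound, so $P_t\phi$ is affine along $\gamma$ with this slope. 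At any interior $\gamma_a$, the sharp $(G'_2)$ reads
\[
e^{-Kt}L = \mathrm{lip}\,P_t\phi(\gamma_a) \leq e^{-Kt}\bigl(P_t(|\mathrm{lip}\,\phi|^2)(\gamma_a)\bigr)^{1/2} \leq e^{-Kt}L,
\]
forcing equality in $(G'_2)$ at $\gamma_a$. By \cref{lemma:sharpconditions}(3) and (2) this is equivalent to $(SG_\infty^K)$.

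For Part (1), pick $f \in \mathrm{Lip}(X)$ attaining $(SG_\infty^K)$ and normalize so $\mathrm{Lip}(f) = |\nabla f| = 1$ $\m$-a.e.\ (by \cref{lemma:sharpconditions}), hence $|\nabla P_t f| = e^{-Kt}$ $\m$-a.e. The strategy is to construct the regular Lagrangian flow $X_s$ of $\nabla f$ (via Ambrosio--Trevisan theory on $\rcd$ spaces) along which $s \mapsto X_s(x)$ is a unit-speed geodesic for a.e.\ starting point $x$, and to use $\pm f$ as a 1-Lipschitz test function in the Kantorovich--Rubinstein formula:
\[
W_1(p_t(x,\cdot), p_t(X_s(x),\cdot)) \geq |P_t f(X_s(x)) - P_t f(x)| = \left|\int_0^s \nabla P_t f \cdot \nabla f\,(X_r(x))\,\d r\right|.
\]
A sharp pointwise Cauchy--Schwarz with $|\nabla P_t f| = e^{-Kt}$ and $|\nabla f| = 1$ a.e.\ forces $\nabla P_t f \cdot \nabla f = \pm e^{-Kt}$ along the flow (alignment of gradients dictated by the rigidity of $(SG_\infty^K)$), so the integral has absolute value $s e^{-Kt} = e^{-Kt}d(x, X_s(x))$. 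Combined with the contraction $W_1 \leq W_2 \leq e^{-Kt}d$, this yields $(SC_1^K)$. Ultracontractivity is used to guarantee integrability and regularity of the RLF data after a standard truncation of $f$; in the smooth manifold case, the classical gradient flow of $\nabla f$ replaces the RLF machinery, obviating this assumption.

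The principal obstacle lies in upgrading a.e.\ information on the gradients to pointwise, directional control along one-dimensional sets. In Part (2), metric Brenier a priori only records the magnitude $|\nabla\phi| = L$ at $\mu$-generic points; one must refine this to show $\phi$ is truly affine along each transport geodesic, so as to improve the naive duality bound $P_t\phi(x_0) - P_t\phi(y_0) \geq \tfrac{1}{2}L^2$ (coming from $\phi + \phi^c \leq 0$ integrated against $\nu$) to the sharp $L^2$. In Part (1), one needs the alignment $\nabla P_t f = e^{-Kt}\nabla f$ along the flow line, which lies on an $\m$-null set a priori but must nonetheless be validated through the RLF incompressibility and the rigidity of the Bakry--Émery defect $\Gamma_2 - K\Gamma$ along trajectories. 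Both points are handled by combining the rigidity of the global estimate with a careful analysis along Wasserstein geodesics and regular Lagrangian flows.
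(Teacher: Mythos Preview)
Your proposal identifies the right ingredients, but both parts contain a genuine gap that you yourself flag in the final paragraph without resolving, and in each case the paper's proof makes a different choice that bypasses the obstacle entirely.

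\textbf{Part (2).} Your argument hinges on the identity $P_t\phi(x_0)-P_t\phi(y_0)=L^2$. The upper bound is fine. For the lower bound you assert that $\phi(z)-\phi(w)=L^2$ for $\pi$-a.e.\ $(z,w)$ because ``$|\nabla\phi|\equiv L$ along each transport geodesic, so $\phi$ drops by $L\cdot d(z,w)$''. This does not follow: having $|\nabla\phi|=L$ $\m$-a.e.\ controls only the magnitude of the gradient, not its alignment with the transport direction. From $c$-concavity one gets $\phi(z)+\phi^c(w)=\tfrac12 d^2(z,w)=\tfrac12 L^2$ on $\spt\pi$, and combined with $\phi+\phi^c\le 0$ this yields only $\phi(z)-\phi(w)\ge \tfrac12 L^2$, exactly the ``naive duality bound'' you mention. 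Closing the gap to $L^2$ would require $\phi^c(w)+\phi(w)=-\tfrac12 L^2$ $\nu$-a.e., which is equivalent to what you are trying to prove. The paper sidesteps this by never attempting to compute $P_t\phi(x_0)-P_t\phi(y_0)$. Instead it uses $-P_tf(x_0)+P_t(Q_1f)(y_0)=\tfrac12 W_2^2(\mu^0,\mu^1)$ with $f=-\phi$, which is \emph{exactly} the Kantorovich duality (no gap), and then runs Kuwada's Hopf--Lax interpolation $a\mapsto P_t(Q_af)(\gamma^a)$ along the space geodesic $\gamma$. Differentiating in $a$ and applying the Hamilton--Jacobi subsolution property together with $(G'_q)$ for $Q_af$ produces a chain of inequalities that collapses to equality, forcing equality in $(G'_q)$ at some $\gamma^a$; then Lemma~\ref{lemma:sharpconditions} finishes.

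\textbf{Part (1).} You flow along $\nabla f$ and therefore need $\nabla P_tf\cdot\nabla f=e^{-Kt}$ along flow lines, i.e.\ the gradients of $f$ and $P_tf$ must be aligned. Even granting $|\nabla f|=1$ and $|\nabla P_tf|=e^{-Kt}$ $\m$-a.e., this alignment is an additional rigidity statement that you do not prove (and invoking ``rigidity of the Bakry--\'Emery defect'' is not enough: that gives $\Gamma_2(P_sf)=K\Gamma(P_sf)$, not directly the directional identity). The paper makes the decisive choice of flowing along $\nabla g$ with $g=P_tf$ instead. Then the energy identity for the gradient flow gives $g(\gamma^0)-g(\gamma^1)=\int_0^1|\nabla g|^2(\gamma^\tau)\d\tau$ with no alignment needed, and since $|\nabla g|\equiv e^{-Kt}$ this equals $e^{-Kt}\int_0^1|\nabla g|(\gamma^\tau)\d\tau\ge e^{-Kt}\,\mathrm{Length}(\gamma)\ge e^{-Kt}d(\gamma^0,\gamma^1)$. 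Testing $W_1$ against the $1$-Lipschitz function $f$ closes the loop. The RLF machinery in the non-smooth case is then applied to (a truncation of) $P_tf_n$, not $f$, and ultracontractivity is used precisely to make $P_tf_n$ a test function so that Ambrosio--Trevisan applies.
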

\begin{proof}[Proof of the statement \eqref{item4}]
		Let $f$ be a non-constant Lipschitz function achieving \eqref{eq:SG_infty} for some $t>0$. Set $g\coloneqq P_t f$.
		Without loss of generality, we may assume
		\begin{equation}\label{eq:assumption}
		|\nabla g|\equiv e^{-Kt}\Lip(f)=e^{-Kt}\quad \text{$\m$-a.e. on $X$.}
		\end{equation}
	\smallskip
	
	\textbf{Part 1}: smooth case.
	
	We assume first that $X$ is a smooth manifold.
	For any $x\in X$, consider the gradient flow curve of $g$:
	\[
	\dot\gamma^a=-\nabla g(\gamma^a),\quad \gamma^0=x.
	\]
	Then by Kantorovich duality
	\begin{align}
		W_1(p_t(x,\d\m),p_t(\gamma^1,\d \m))&\geq P_tf(x)-P_tf(\gamma^1)= g(\gamma^0)-g(\gamma^1)\\
		&=\int^1_0 |\nabla g|^2(\gamma^\tau)\d \tau\overset{\eqref{eq:assumption}}{= } e^{-Kt}\int^1_0 |\nabla g|(\gamma^\tau)\d\tau\\
		&\geq e^{-Kt} \mathrm{Length}(\gamma|_{[0,1]} )\geq e^{-Kt}d(x,\gamma^1),\label{ineq:25/7/7-1}
	\end{align}
	which implies $(SC^K_1)$.
	\smallskip
	
	\textbf{Part 2}: construction of  gradient flow curves on $\rcd(K,\infty)$ mms.
	
Without loss of generality, assume $f(\bar{x}) = 0$ for some $\bar{x} \in X$.  
For each $n \in \mathbb{N}$, consider the truncation $f_n \coloneqq f \chi_n$, where
\begin{equation}
	\chi_n(x) \coloneqq \left(1 - \frac{\mathrm{dist}(x, B(\bar{x}, n))}{n} \right) \vee 0.
\end{equation}
Since $\Lip(\chi_n) \leq \frac{1}{n}$ and $|f| \leq 2n$ on $\spt(\chi_n)$, it follows that $\Lip(f_n) \leq 3$.  
Moreover, writing $\Delta P_t f_n = P_{t/2} \Delta P_{t/2} f_n$, and using the ultracontractivity and regularising properties of the heat flow, we see that $P_t f_n$ is a test function (see e.g. \cite[Definition 6.1.7]{Gigli-Pasqualetto-Book}).
	
Then by \cite{Ambrosio-Trevisan14} (see Theorem 9.6 together with Lemma 7.4 and Theorem 7.6), for any probability density $\rho \in L^\infty(X, \m)$ and each $n$, there exists a measure $\pi_n \in \mathcal{P}(C([0,1]; X))$\footnote{The path measure $\pi_n$ is, in fact, the lift of a curve $(\mu^a)_{a \in [0,1]}$ of probability measures solving the continuity equation with initial data $\mu^0 = \rho \m$ and velocity field given by the gradient of $P_t f_n$.} such that $(e_0)_\# \pi_n = \rho \m$, and:

\begin{enumerate}[(i)]
	\item\label{itemi} $\pi_n$ is concentrated on absolutely continuous curves $\gamma$ with $|\dot\gamma^a| = |\nabla P_t f_n|(\gamma^a)$ for almost every $a$;
	
	\item\label{itemii} for $\pi_n$-almost every $\gamma$, it holds for all $0 \leq a < b \leq 1$ that
	\begin{equation}
		P_t f_n(\gamma^a) - P_t f_n(\gamma^b) = \int_a^b |\nabla P_t f_n|^2(\gamma^\tau) \, \mathrm{d}\tau.
	\end{equation}
\end{enumerate}
In particular, there exist a point $x \in X$ and a sequence of absolutely continuous curves $(\gamma_n)_{n \in \mathbb{N}}$ with $\gamma_n(0) = x$ s.t. for each $n \in \mathbb{N}$, the curve $\gamma_n\in\spt(\pi_n)$ and satisfies \ref{itemi} and \ref{itemii}.

By the sharpness assumption and the inequality \eqref{ineq:lipLip}, we obtain the following two-sided bounds on $|\nabla P_tf_n|$, valid $\m$-a.e. on $X$:
\begin{align}
	G^-_n&\coloneqq e^{-Kt}\left( P_t(|\nabla f|)-P_t(|\nabla (f-f_n)|) \right)\label{ineq:20/07/25-2}\\
&\leq |\nabla P_t f|-\lip P_t (f-f_n)\leq	|\nabla P_t f_n|\leq |\nabla P_t f|+\lip P_t (f-f_n)\\
	&\leq e^{-Kt}\left( P_t(|\nabla f|)+P_t(|\nabla (f-f_n)|) \right)\eqqcolon G^+_n\label{ineq:20/07/25-3}
\end{align}
The functions $G_n^\pm$ are continuous on $X$ and uniformly bounded for all $n$.

Therefore, without loss of generality (even if the two-sided bounds for $|\nabla P_t f_n|$ fail on a non-negligible set, one can show that they hold along $\pi_n$-a.e. curve by the same argument as in \cite[Proposition 5.10]{AGSInvention}), we may assume that for each $n$, $\gamma_n$ satisfies
\begin{equation}\label{ineq:16/07/25-1}
		P_t f_n(\gamma^0_n)-P_tf_n(\gamma^1_n)\geq \int^1_0G^-_n(\gamma^\tau_n)^2\d \tau,\quad |\dot\gamma^a_n|\leq G^+_n(\gamma^a_n),\quad \forall a\in(0,1).
\end{equation}

Note that the curves $(\gamma_n)_n$ are Lipschitz with a uniform Lipschitz constant. 
Hence, by the properness of $X$, there exists a continuous curve $\gamma$ with $\gamma^0 = x$ s.t. up to a subsequence, $\gamma_n \to \gamma$ uniformly.
	\smallskip
	
	\textbf{Part 3}: sharp Wasserstein contraction along $\gamma$.
		
		As $P_tf$ is continuous and $P_tf_n\to P_tf$ pointwisely, 
		\begin{equation}\label{ineq:20/07/25-1}
			P_tf(\gamma^0)-P_tf(\gamma^1)=\lim_{n\to \infty} P_tf_n(\gamma^0)-P_tf_n(\gamma^1_n)+P_tf_n(\gamma^1_n)-P_tf(\gamma^1_n).
		\end{equation}
	By \cref{lemma:uniintegrable}, there exists a sequence $ \varepsilon_n\searrow 0$ s.t. for all $\tau\in[0,1]$
	\begin{equation}\label{ineq:20/07/25-4}
		P_t|\nabla (f-f_n)|(\gamma^\tau_n)\leq \int_{B(\bar x,n)^c}p_t(\gamma^\tau_n,y)(f(y)+|\nabla f|)\d \m(y)\leq \varepsilon_n.
	\end{equation}
Similarly, one can show that $|P_tf_n(\gamma^1_n)-P_tf(\gamma^1_n)|\leq \varepsilon_n$.
By the assumption of $(SG^K_\infty)$, $P_t|\nabla f|$ equals $1$ everywhere on $X$.
	Therefore, combining above inequalities gives
	\begin{align}
		P_tf(\gamma^0)-P_tf(\gamma^1)&\overset{\eqref{ineq:20/07/25-1}}{=}\lim_{n\to \infty} P_tf_n(\gamma^0)-P_tf_n(\gamma^1_n)\\
		&\overset{\eqref{ineq:16/07/25-1}}{\geq} \lim_{n\to\infty}\int^1_0G^-_n(\gamma^\tau_n)^2\d \tau\geq \lim_{n\to\infty}\left(\int^1_0 G^-_n(\gamma^\tau_n)\d\tau\right)^2\\
		&\overset{\eqref{ineq:20/07/25-4}}{=} \lim_{n\to\infty}\left(\int^1_0 G^+_n(\gamma^\tau_n)\d\tau\right)^2\\
		&\overunderset{\eqref{ineq:16/07/25-1}}{\eqref{ineq:20/07/25-3}}{\geq }\lim_{n\to\infty}\int^1_0 e^{-Kt}P_t|\nabla f|(\gamma^\tau_n)\d\tau\cdot \int^1_0 |\dot\gamma^\tau_n|\d\tau\\
		&\geq  e^{-Kt}d(\gamma^0,\gamma^1)\int^1_0P_t|\nabla f|(\gamma^\tau)\d\tau \overset{(SG^K_\infty)}{=}e^{-Kt}d(\gamma^0,\gamma^1).
	\end{align}
This verifies that \eqref{ineq:25/7/7-1} holds at the point \( x = \gamma^0 \) and the endpoint \( \gamma^1 \).
	\end{proof}

	\begin{proof}[Proof of the statement \eqref{item3}]
	Assume that $(SC^K_2)$ is attained at some pair of points $x_0 \neq y_0 \in X$ and at some time $t > 0$. 
	Let $(\gamma^a)_{a\in[0,1]}$ be a constant speed geodesic from $x_0$ to $y_0$.
	By \cref{prop:geodesic}, $[0,1]\ni a\mapsto\mu^a\coloneqq p_t(\gamma^a,\d\m)$ is a Wasserstein geodesic.
	Let $\phi$ be a Kantorovich potential relative to an optimal transport plan $\sigma$ between $\mu^0$ and $\mu^1$. 
	
	We divide the proof into two steps. 

\smallskip

\textbf{Step 1}: $\phi$ is Lipschitz on $X$.

We claim that for any bounded subset $U\subset X$, there exists $c_U>0$ s.t. $p_t(x_0,y)>c_U$ for all $y\in U$.
Indeed, when $(X,d)$ is proper, the spatial function $x\mapsto \ent(p_t(x,\d\m))$ is locally bounded by \cref{prop:ME}, and hence by the log-Harnack inequality \eqref{ineq:LHI} the heat kernel has positive lower bound over any bounded set.
The same holds by applying the Gaussian lower estimate \eqref{ineq:GaussLB} if $\m$ is finite.

Then the claim on the heat kernel allows us to apply \cite[Lemma 10.1]{AGSInvention}, which shows with the slope estimate \cite[Proposition 3,9]{AGSInvention} that $\phi$ admits a weak upper gradient s.t. $|\nabla \phi|(x)\leq d(x,y)$ for $\sigma$-a.e. $(x,y)$.

	By the $\infty$-Wasserstein contraction \cite[Theorem 4.4]{Savare14}, there exists a coupling $\tilde{\sigma}$ between $\mu^0$ and $\mu^1$ s.t. $\|\tilde \sigma\|_{L^\infty(X\times X)}\leq e^{-Kt }d(x_0,y_0)$.
	Then the following chain of inequalities must in fact be equalities:
	\[
	e^{-Kpt}d(x_0,y_0)^p\geq \int d^p(x, y)\d\tilde \sigma\geq \int d^p( x, y)\d \sigma=W^p_p(\mu^0,\mu^1)=e^{-Kpt}d(x_0,y_0)^p.
	\]
	In other words, $\tilde \sigma $ is also an optimal transport plan w.r.t. $W_2$ distance.
    By the uniqueness of optimal plans (see \cite{Rajala-Sturm14}), it follows that $\sigma = \tilde{\sigma}$.
    As a result, we have $|\nabla \phi|(x)\leq e^{-Kt}d(x_0,y_0)$ for $\mu^0$-a.e. $x$ and hence also for $\m$-a.e. $x\in X$ since $\mu^0$ and $\m$ are mutually absolutely continuous.
	
	Finally, by the Sobolev-to-Lipschitz property \cref{lemma:Sobolev-Lip}, we conclude $\phi\in \Lip(X)$, up to changing to a continuous representative.
	\smallskip
	
	\textbf{Step 2}: $f\coloneqq -\phi$ attains the sharp $\infty$-gradient estimate.	
	To make the argument slightly more general, we first work with a general exponent $p \in (1,\infty)$ and emphasize when we specialize to $p = 2$.
	
	Set $L(s)\coloneqq p^{-1}s^p$ and its Legendre conjugate $L^*$ as $L^*(s)\coloneqq q^{-1}s^q$ with $q$ the H\"older dual of $p$.
	 We define the Hopf-Lax semigroup for $f$ as in \eqref{eq:HLsemigroup1} by
	\[
	Q_a f(x)\coloneqq \inf_{y\in X}\left[  f(y)+aL\left(\frac{d(x,y)}{a}\right) \right].
	\]
As $f$ is Lipschitz, $(a,x)\mapsto Q_af(x)$ is Lipschitz in $[0,\infty)\times X$ (see e.g. \cite{Balogh12HJ}).
By \cref{prop:Gradientestimate}, for each $t>0$ we have pointwise on $X$ that
\begin{equation}\label{ineq:lip-gradient}
	\lip (P_tQ_a f)\leq e^{-Kt}P_t |\nabla Q_a f|.
	\end{equation}

By the sharp contraction assumption and Kantorovich duality, we have:
\begin{align}\label{eq:SC^K_pproof}
	\frac{1}{p}e^{-Kt p}d^p(x_0,y_0)=\frac{1}{p}W^p_p(\mu^0,\mu^1)&=-P_t f(x_0)+P_t (Q_1(f))(y_0).
\end{align}
Repeating the proof of \cite[Proposition 3.7]{Kuwadaduality} obtains
\begin{align}
	&-P_t f(x_0)+P_t (Q_1f)(y_0)=\int^1_0 \partial_a \left( P_t (Q_af)(\gamma^a)\right)\d a\\
	\overset{(I_1)}{\leq}& \int^1_0 d(x_0,y_0)\lip (P_t(Q_af))(\gamma^a) -P_t (L^*(|\lip Q_a f|))(\gamma^a)\d a\\
	= &\int^1_0 d(x_0,y_0)\lip (P_t(Q_af))(\gamma^a) -P_t \left( \frac{|\lip Q_a f|^q}{q} \right)(\gamma^a)\d a\\
	\overset{(I_2)}{\leq} & \int^1_0 d(x_0,y_0)\lip (P_t(Q_af))(\gamma^a) -\frac{1}{q} \left(e^{Ktq} |\lip P_t Q_a f|^q \right)(\gamma^a)\d a\\
	 \overset{(I_3)}{\leq} &\int^1_0 L(W_p(\mu^0,\mu^1)))\d a =\frac{1}{p}W^p_p(\mu^0,\mu^1),
\end{align}
where in $(I_2)$ we applied the pointwise $q$-gradient estimate $(G'_q)$ to $Q_af$ for each $a$, and in $(I_3)$ we used the Legendre transform $L(s)=\sup_{t
\geq 0}\{ts-L^*(t)\}$.
We briefly comment on $(I_1)$. There, we applied $\lip(P_t(Q_a f))$ as an upper gradient of $P_t(Q_a f)$ along the geodesic $\gamma$. 
Moreover, for $p = 2$, the map $(a,x) \mapsto Q_a f(x)$ solves a Hamilton–Jacobi equation for a.e. $a \in (0,1)$ and $\m$-a.e. $x \in X$; see \cite[Theorem 3.5]{AGSInvention}.

With \eqref{eq:SC^K_pproof}, equality in the gradient estimate we used in $(I_2)$ must be attained for a.e. $a\in[0,1]$. 
Therefore, the proof is complete by \cref{lemma:sharpconditions}.
\end{proof}

The author expects that the restriction to $p = 2$ in the statement \eqref{item3} is merely technical, and that the implication $(SC^K_p) \Rightarrow (SG^K_\infty)$ should hold for all $p \in [1,\infty)$ under the same assumption.  
This implication is known to be valid at least when $(X,d,\m)$ has finite dimension.

	We say a mms is a \emph{finite dimensional} $\rcd(K,\infty)$ space if it satisfies both $\rcd(K,\infty)$ and $\rcd(K',N)$ for some $K'\in \R$ and $N<\infty$.
	\begin{corollary}\label{cor:sharpN}
		Let $(X,d,\m)$ be a finite dimensional $\rcd(K,\infty)$ mms.
		Then $(X,d,\m)$ satisfies $(SG^K_\infty)$ if and only if it satisfies $(SC^K_p)$ for some (and hence all) $p\in[1,\infty)$.
	\end{corollary}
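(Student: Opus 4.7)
The proof combines Lemma \ref{lemma:sharpconditions} with Theorem \ref{thm:dualitysharp}, once one checks that both the properness and the ultracontractivity hypotheses required there are automatic in the finite-dimensional $\rcd(K,\infty)$ setting. By hypothesis the space also satisfies $\rcd(K',N)$ for some $K'\in\R$ and $N<\infty$, so $(X,d)$ is locally compact and geodesic hence proper, and the heat flow is ultracontractive through the local Sobolev inequality available under $\cd(K',N)$.

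With these two facts in hand, the forward direction is straightforward: starting from $(SG^K_\infty)$, Theorem \ref{thm:dualitysharp}\eqref{item4} applies and yields $(SC^K_1)$, which by Lemma \ref{lemma:sharpconditions}\eqref{item1} upgrades to $(SC^K_p)$ for every $p\in[1,\infty)$.

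The converse $(SC^K_p)\Rightarrow (SG^K_\infty)$ splits according to $p$. For $p\in[1,2]$, Lemma \ref{lemma:sharpconditions}\eqref{item1} gives $(SC^K_p)\Rightarrow(SC^K_2)$, and Theorem \ref{thm:dualitysharp}\eqref{item3} then closes the argument. The new content is the case $p>2$, where no such monotonicity chain back down to $p=2$ is available. In this case I would revisit the proof of Theorem \ref{thm:dualitysharp}\eqref{item3} and verify that it extends to general $p\in(1,\infty)$. As noted there, the only step genuinely restricted to $p=2$ is $(I_1)$, which invokes the full Hamilton--Jacobi equation for $Q_a f$ from \cite{AGSInvention}. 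A closer inspection shows that $(I_1)$ uses only the subsolution inequality $\partial_a Q_a f+L^*(\lip Q_a f)\le 0$; this subsolution property is valid in any proper geodesic metric measure space for arbitrary $p\in(1,\infty)$ (see, e.g., \cite{Balogh12HJ}). The gradient estimate $(G'_q)$ invoked in $(I_2)$ is already available for every $p\in[1,\infty]$ by \cite{Savare14}, and the remainder of the argument is unchanged.

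The main obstacle I anticipate is making the chain-rule bound in $(I_1)$ rigorous for $P_t(Q_a f)$ along the geodesic $\gamma$, since the candidate Lipschitz function $f=-\phi$ built from the Kantorovich potential is generally unbounded and not integrable. This is precisely the regularity issue treated by the gradient estimates for unbounded Lipschitz functions developed in \cref{sec:gradientestimate} together with the entropy-moment and ultracontractivity bounds available in the finite-dimensional regime; packaging these ingredients so as to legitimately differentiate $a\mapsto P_t(Q_a f)(\gamma^a)$ and to apply Fubini against $p_t(\gamma^a,\cdot)\m$ is the only technical point that requires care.
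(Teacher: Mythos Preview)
Your forward direction and the converse for $p\le 2$ are fine, and your observation that the Hamilton--Jacobi step $(I_1)$ only needs the subsolution inequality from \cite{Balogh12HJ} is correct and matches what the paper does. However, your claim that ``the only step genuinely restricted to $p=2$ is $(I_1)$'' overlooks Step~1 of the proof of Theorem~\ref{thm:dualitysharp}\eqref{item3}, which is where the Kantorovich potential $\phi$ is shown to be \emph{Lipschitz} in the first place. That step relies on two $p=2$ ingredients: the slope bound $|\nabla\phi|(x)\le d(x,y)$ for $\sigma$-a.e.\ $(x,y)$ coming from \cite[Lemma~10.1, Proposition~3.9]{AGSInvention}, and the uniqueness of the $W_2$-optimal plan from \cite{Rajala-Sturm14}, which is what forces the $W_\infty$-coupling $\tilde\sigma$ to coincide with the $W_2$-optimal $\sigma$ and hence bounds $|\nabla\phi|$ by $e^{-Kt}d(x_0,y_0)$. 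Without this, for $p>2$ you have no reason to believe $f=-\phi$ is Lipschitz at all, so your entire Step~2 computation and the ``unboundedness'' concerns you raise at the end are premature.

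The paper's proof closes exactly this gap: it invokes uniqueness of optimal plans for the cost $d^p$ from \cite{Cavalletti-Huesmann15}, and it sketches the extension of the slope estimate \eqref{ineq:pslope} to general $p\in(1,\infty)$ via the Hopf--Lax machinery of \cite{Cavalletti-McCann21}, so that $|\nabla\phi|^{q-1}$ is again controlled by the transport distance $\sigma$-a.e. Once Lipschitzness of $\phi$ is secured, the rest of your argument goes through essentially as you describe.
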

\begin{proof}
Notice that the restriction of $p = 2$ is required only in the following three places in the proof of \cref{thm:dualitysharp} \cref{item3}:
	\begin{enumerate}[(i)]
		\item the uniqueness of optimal transport plans;
		\item the existence of a bounded weak upper gradient for the Kantorovich potential;
		\item the use of the Hamilton--Jacobi equation in \( (I_2) \) of Step 2.
	\end{enumerate}
	The uniqueness of optimal transport plans for general costs of the form $d^p(x,y)$ has been established in \cite{Cavalletti-Huesmann15}, and the Hopf-Lax semigroup as a sub-solution to the Hamilton--Jacob equation for general Hamiltonian is shown in \cite[Theorem 2.1]{Balogh12HJ} (where all assumptions are satisfied on finite dimensional $\rcd$ spaces).

It remains to verify that, for general $ p \in (1,\infty)$, the Kantorovich potential $\phi$ w.r.t. $W_p$ admits a weak upper gradient that is bounded.
This can be proved by following the same route as in \cite{AGSInvention}, using the properties of the Hopf--Lax semigroup from \cite[Section 3]{Cavalletti-McCann21}.

We only provide a sketch here.  
First, one can replicate the argument of \cite[Proposition 3.4]{AGSInvention} (cf. also \cite[Proposition 3.4]{AGS13RMI}) to obtain the slope estimate for Kantorovich potential $\phi $:
	\begin{equation}\label{ineq:pslope}
		\lip^+ \phi (x)=\lip^+ Q_1(-\phi^c)(x)\leq D^-(x,1)^{p-1},
	\end{equation}
where $\lip^+\phi(x)$ and $D^-(x,1)$ are defined as 
\[
 \lip^+\phi(x)\coloneqq \limsup_{y\to x} \frac{[\phi(y)-\phi(x)]\vee 0}{d(x,y)}, \quad D^-(x,1)\coloneqq \inf_{n}\liminf_{n\to \infty} d(x,y_n)
 \]
with the infimum taken over all minimizing sequences $(y_n)$ in the definition of Hopf--Lax semigroup \eqref{eq:HLsemigroup1}.
The conclusion on the weak upper gradient then follows from the same arguments in \cite[Lemma 10.1]{AGSInvention}.
\end{proof}
\subsection{Remarks on weighed Euclidean spaces}\label{sec:rmkrigidity}
It is natural to ask whether the sharp Wasserstein contraction can actually be attained.  
For each $n \in \N$ and $k \in \R$, we denote by $\R^n_k$ the $n$-dimensional weighted Euclidean space
\begin{equation} \label{eq:weightedEuc}
( \R^n, |\cdot|, e^{-\frac{k}{2}|x|^2} \mathcal{L}^n(\d x) ).
\end{equation}
For every $k \in \R$, $\R^n_k$ satisfies $\rcd(k,\infty)$ and heat kernels are bounded when $k\leq 0$; see \eqref{eq:Mehler} below.

It is not obvious even in the case $n = 1$ whether the Wasserstein distance between heat kernels attains equality in the contraction estimate when $k \neq 0$.  
However, by \cref{thm:dualitysharp}, it suffices to check the sharp gradient estimates with a suitable Lipschitz function.

It has been shown in \cite{Han-JMPA21} for $\R^1_k$, $k=1$ and for a linear function $f$ that $|\nabla P_t f|=e^{-kt}|\nabla f|$.
The same also holds for negative $k$. 
Indeed, the heat kernel on $\R^1_k$ for $k=-2$ is given by 
\begin{equation}\label{eq:Mehler}
	p_t(x,y) = \frac{1}{(2\pi \sinh 2t)^{1/2}} \exp\left( \frac{2xye^{-2t} - x^2 - y^2}{1 - e^{-4t}} - t \right)
\end{equation}
see \cite[Example 9.18]{Grigoryan09HK}.
A direct computation for $f(x)\equiv x$ shows
\begin{align}
	P_tf(y) &=(2\pi \sinh 2t)^{-1/2}e^{-t}\int_\R x\exp\left( \frac{2xye^{-2t} - x^2 - y^2}{1 - e^{-4t}} \right) e^{x^2}\d x\\
	&= e^{-2t}(\pi (1-e^{-4t}))^{-1/2}\int x\exp\left( -\frac{(e^{-2t}x-y)^2}{1-e^{-4t}}\right)\d x\\
	=&\frac{1}{\sqrt{\pi}}e^{2t}y\int e^{-x^2}\d x=e^{2t}y, \quad \forall y\in\R.
\end{align}
This confirms $|\nabla P_t f|=e^{-kt}|\nabla f|$ for $k=-2$ and the case of general $k\in\R$ follows by a scaling argument.
\begin{proposition}\label{prop:Eucleadian}
	Let $(\R^n,|\cdot|,e^{-f}\mathcal{L}^n)$ be a weighted Euclidean space with $\nabla^2f \equiv k\mathrm{Id}$ for some $k\in\R$.
	Then for all $x,y\in \R^n$, $t>0$ and $p\in[1,\infty]$, 
	\begin{equation}
		W_p(p_t(x,\cdot),p_t(y,\cdot) )=e^{-kt}|x-y|.
	\end{equation} 
\end{proposition}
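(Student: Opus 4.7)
The plan is to compute the heat semigroup acting on coordinate (linear) functions explicitly and then squeeze the $p$-Wasserstein distance between a lower bound coming from Kantorovich duality and the upper bound supplied by the $\rcd(k,\infty)$ contraction.

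First, since $\nabla^2 f\equiv k\,\mathrm{Id}$, the function $f$ agrees with $\frac{k}{2}|x|^2$ up to an affine term, which only translates the reference measure and multiplies it by a constant. Therefore, up to isomorphism of metric measure spaces, $(\R^n,|\cdot|,e^{-f}\mathcal{L}^n)$ is the $n$-fold product of the one-dimensional weighted line $\R^1_k$. As the Dirichlet form splits accordingly, the heat kernel tensorises: $p^n_t(x,y)=\prod_{i=1}^n p^1_t(x_i,y_i)$.

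Second, the one-dimensional identity $P^1_t(\mathrm{id})(y)=e^{-kt}y$ is exactly the computation carried out in \cref{sec:rmkrigidity} (via Mehler's formula for $k=-2$, together with a scaling argument for general $k$). By tensorisation, for every $v\in\R^n$ the linear function $f_v(x):=\langle v,x\rangle$ satisfies
\[
P_tf_v(y)=\sum_{i=1}^n v_i P^1_t(\mathrm{id})(y_i)=e^{-kt}\langle v,y\rangle.
\]

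Third, fix distinct $x,y\in\R^n$ and set $v=(x-y)/|x-y|$, so that $f_v$ is $1$-Lipschitz. Kantorovich duality then gives
\[
W_1(p_t(x,\cdot),p_t(y,\cdot))\geq P_tf_v(x)-P_tf_v(y)=e^{-kt}|x-y|.
\]
On the other hand, the $\rcd(k,\infty)$ condition, together with Savar\'e's extension of the Wasserstein contraction to every $p\in[1,\infty]$ (cited in the excerpt), yields $W_p(p_t(x,\cdot),p_t(y,\cdot))\leq e^{-kt}|x-y|$ for all $p\in[1,\infty]$. Combining with the monotonicity $W_1\leq W_p$, all three inequalities collapse to equalities, giving the claim.

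No serious obstacle is expected: the only nontrivial ingredients are the tensorisation of the heat kernel and the one-dimensional identity, both of which are either already settled in the excerpt or reduce to the classical fact that the coordinate function is an eigenfunction of the Ornstein--Uhlenbeck generator $\Delta-k\,x\cdot\nabla$ with eigenvalue $-k$.
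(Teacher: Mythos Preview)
Your argument is essentially correct and is in fact a streamlined version of what the paper does. Both proofs reduce to one dimension by tensorisation and use that the identity function satisfies $P_t(\mathrm{id})=e^{-kt}\mathrm{id}$ (up to an additive constant in $t$). The paper phrases this as ``the linear function attains the sharp $\infty$-gradient estimate'' and then invokes \cref{thm:dualitysharp} to pass to the sharp contraction; you instead plug the linear function directly into the Kantorovich duality for $W_1$ and squeeze against the known upper bound. Your route is more elementary in that it avoids the abstract duality machinery and immediately yields the equality for \emph{every} pair $x,y$, whereas the paper's appeal to \cref{thm:dualitysharp} only literally produces \emph{some} pair and one must unwind the proof (gradient flow of $P_tf$ in the smooth case) to see that all pairs are covered.

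There is, however, one genuine slip in your reduction step. You assert that the affine part of $f$ ``only translates the reference measure,'' so that the space is isomorphic to the product of copies of $\R^1_k$. This is true for $k\neq 0$ (completing the square), but fails when $k=0$ and $\nabla f\equiv b\neq 0$: the weighted line $(\R,|\cdot|,e^{-bx}\mathcal{L}^1)$ is not isomorphic as a metric measure space to the unweighted line, and the coordinate function is \emph{not} an eigenfunction of the generator $\partial_{xx}-b\,\partial_x$ (one gets $P_t(\mathrm{id})(y)=y-bt$). This is precisely the case the paper singles out and treats separately via the explicit heat kernel formula. Fortunately your argument only needs the \emph{difference} $P_tf_v(x)-P_tf_v(y)=e^{-kt}\langle v,x-y\rangle$, and the spurious $t$-dependent constant cancels there, so the repair is immediate: either note that the $n$-dimensional space still factors as a product of one-dimensional weighted lines (not necessarily $\R^1_k$) and check the difference identity in each factor, or simply observe that $(\Delta - (kx+b)\cdot\nabla)f_v = -kf_v - \langle b,v\rangle$ forces $P_tf_v = e^{-kt}f_v + c(t)$.
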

\begin{proof}
	By writing the space as a product of one-dimensional weighted Euclidean spaces and with the tensorization property of heat kernels, we only need to show for $(\R,|\cdot|,e^{-f}\mathcal{L}^1)$ with $f(x)=
	\frac{k}{2}x^2+ax$.
	For $k\neq 0$, the space is isomorphic to $\R^1_k$ by translation, where we have shown the sharp $\infty$-Bakry--\'Emery estimate is attained.
	
	For $k=0$ and $a\neq 0$, denote by $p_t$ and $\tilde p_t$ the heat kernel on the unweighted and weighted Euclidean space, respectively.
	By \cite[Theorem 9.15]{Grigoryan09HK}, one has the following relation
	\begin{equation}\label{eq:26/03-1}
		\tilde p_t(x,z)=p_t(x,z)e^{-a^2t}e^{-a(x+z)},
	\end{equation}
	which allows us to check the sharp $\infty$-Bakry--\'Emery estimate for linear function as before. 
	The statement then follows from \cref{thm:dualitysharp}.
\end{proof}

\subsection{Rigidity results}\label{sec:apptoduality}
	For any $K\in\R$, we say that an $\rcd(K,\infty)$ mms $(X,d,\m)$ \emph{splits off a line} if there exists a mms $(X',d',\m')$, either satisfying $\rcd(K,\infty)$ or being a singleton, s.t. $(X,d,\m)$ is isomorphic as a mms to $(X',d',\m')\times \R^1_K$.

The following result is a direct consequence of \cref{cor:sharpN} and \cite{ABS19Gafa} (see Theorem 2.1 and Remark 2.3). 
\begin{corollary}\label{cor:sharpK=0}
	Let $(X,d,\m)$ be an $\rcd(0,N)$ mms for $N<\infty$.
	Then the following are equivalent:
	\begin{enumerate}
		\item\label{item:split} $(X,d,\m)$ splits off a line;
		\item $(X,d,\m)$ satisfies $(SC^0_p)$ for some $p\in[1,\infty)$;
		\item $(X,d,\m)$ satisfies $(SG^0_p)$ for some $p\in[1,\infty]$.
	\end{enumerate}
\end{corollary}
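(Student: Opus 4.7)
The plan is to establish the cycle of implications $(1)\Rightarrow (2)\Rightarrow (3)\Rightarrow (1)$, drawing on \cref{cor:sharpN}, \cref{lemma:sharpconditions}, and the rigidity theorem of Ambrosio--Bru\'e--Semola.

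For $(1)\Rightarrow (2)$ and $(1)\Rightarrow (3)$ I would argue directly on the splitting $(X,d,\m)\cong Y\times \R^1_0$. The heat kernel tensorises over the product, and by \cref{prop:Eucleadian} any two distinct points differing only in the $\R^1_0$ coordinate attain equality in the Wasserstein contraction for every $p\in[1,\infty)$, which yields $(SC^0_p)$. The same proposition shows that the coordinate function on the $\R$-factor, viewed as a Lipschitz function on $X$, satisfies $|\nabla P_t f|\equiv 1\equiv \|\nabla f\|_{L^\infty}$, giving $(SG^0_\infty)$; the remaining sharp gradient estimates then follow from \cref{lemma:sharpconditions}.

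For $(2)\Rightarrow (3)$, the inclusion $\rcd(0,N)\subset \rcd(0,\infty)$ places us in the setting of \cref{cor:sharpN}, which supplies $(SC^0_p)\Leftrightarrow (SG^0_\infty)$ for every $p\in[1,\infty)$; the second item of \cref{lemma:sharpconditions} then upgrades $(SG^0_\infty)$ to $(SG^0_q)$ for all $q\in[1,\infty]$. For the key direction $(3)\Rightarrow (1)$, \cref{lemma:sharpconditions} reduces the cases $p\in(1,\infty]$ to $p=1$, so it suffices to assume $(SG^0_1)$, i.e.\ the existence of a non-constant $f\in\Lip(X)$ attaining equality in the $1$-Bakry--\'Emery estimate. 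I would then invoke \cite[Theorem~2.1 and Remark~2.3]{ABS19Gafa} on the $\rcd(0,N)$ space to conclude that $(X,d,\m)$ splits off a line.

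The main obstacle I foresee is a mild mismatch of hypotheses: \cite{ABS19Gafa} is phrased for \emph{bounded} Lipschitz functions, whereas the $f$ realising $(SG^0_1)$ in our framework may be unbounded (a Kantorovich potential produced via \cref{cor:sharpN}, for instance). This gap is narrow, because the proof of \cref{lemma:sharpconditions} forces $|\nabla f|$ to be constant $\m$-a.e.\ whenever $(SG^0_1)$ is attained; a truncation of such an $f$, together with the essentially local character of the regular Lagrangian flow argument used by Ambrosio--Bru\'e--Semola, allows the reduction to the bounded setting.
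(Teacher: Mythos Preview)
Your strategy is correct and matches the paper's one-line proof, which simply cites \cref{cor:sharpN} together with \cite[Theorem~2.1 and Remark~2.3]{ABS19Gafa}. Your cycle $(1)\Rightarrow(2)\Rightarrow(3)\Rightarrow(1)$ is exactly the intended argument, and your explicit treatment of $(1)\Rightarrow(2)$ via tensorisation and \cref{prop:Eucleadian} is a useful addition that the paper leaves implicit.

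There is, however, a small slip in your handling of the boundedness issue. You write that ``the proof of \cref{lemma:sharpconditions} forces $|\nabla f|$ to be constant $\m$-a.e.\ whenever $(SG^0_1)$ is attained''; this is not what that lemma shows. The constancy of $|\nabla f|$ comes from the equality case of H\"older's inequality in the chain \eqref{ineq:25/07/25-1}, which requires $p>1$. For $p=1$ the identity $|\nabla P_t f|=P_t(|\nabla f|)$ does not by itself force $|\nabla f|$ to be constant, so your proposed reduction to a truncated function with constant gradient does not go through directly in this case. The paper does not attempt to close this gap internally either: it simply defers to \cite[Remark~2.3]{ABS19Gafa}, which is precisely the place where the extension of the rigidity argument beyond bounded Lipschitz functions is discussed. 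So your instinct that the gap is ``narrow'' is right, but the correct resolution is to cite that remark rather than to invoke \cref{lemma:sharpconditions}.
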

For spaces having finite total mass, applying \cref{thm:dualitysharp} to the rigidity results established in \cite{Han-JMPA21} (see Proposition 3.13 and Corollary 3.8) yields the following.
\begin{corollary}
	Let $(X,d,\m)$ be an $\rcd(K,\infty)$ mms for $K\in\R$. 
	Assume that $\m(X)<\infty$. 
	If $(X,d,\m)$ satisfies $(SC^K_2)$, then $K>0$ and the space splits off a line.
\end{corollary}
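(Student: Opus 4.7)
The plan is to combine the duality established in \cref{thm:dualitysharp} with the rigidity results of Han \cite{Han-JMPA21}. Since the hypothesis imposes $\m(X)<\infty$ in place of properness, the relevant version of the duality is the second assertion of \cref{thm:dualitysharp}, which is stated to be valid precisely in that finite-measure setting.

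First, I would apply the implication $(SC^K_2)\Rightarrow (SG^K_\infty)$ of \cref{thm:dualitysharp} to the hypothesis. This produces $t>0$ and a non-constant $f\in\Lip(X)$ with
\[
|\nabla P_tf|=e^{-Kt}\|\nabla f\|_{L^\infty}\quad\text{$\m$-a.e. on $X$.}
\]
By \cref{lemma:sharpconditions}, $(SG^K_\infty)$ also implies $(SG^K_p)$ for every $p\in[1,\infty]$, so the sharp gradient estimate is available in whichever exponent the subsequent citation requires.

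Second, I would invoke Han's rigidity on finite-mass $\rcd(K,\infty)$ spaces. By \cite[Corollary 3.8]{Han-JMPA21}, the existence of a non-constant Lipschitz function attaining the sharp Bakry--Émery equality on a finite-measure $\rcd(K,\infty)$ space forces $K>0$: intuitively, when $K\le 0$ and $\m(X)<\infty$ the ergodicity/contractivity of the heat semigroup confines the equality cases to constants. Once $K>0$ is secured, \cite[Proposition 3.13]{Han-JMPA21} identifies $X$ as a direct product of the form $X'\times\R^1_K$, with $X'$ either a singleton or an $\rcd(K,\infty)$ space, which is precisely the splitting off a line in the sense of \cref{sec:apptoduality}.

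The main potential obstacle is the compatibility between Han's hypotheses and the output of the duality theorem: Han's arguments are typically formulated for bounded Lipschitz test functions, while the $f$ produced by \cref{thm:dualitysharp} has no \emph{a priori} integrability. This gap is bridged by the machinery of \cref{sec:gradientestimate} (gradient estimates for unbounded Lipschitz functions) together with a truncation via the cut-off $\chi_m$ of \eqref{eq:cut-off}: the bounded Lipschitz approximations inherit the sharp equality up to error terms that vanish thanks to the uniform integrability provided by \cref{lemma:uniintegrable}. With this technical reduction in place, Han's arguments --- based on regular Lagrangian flow analysis and Gigli's splitting theorem --- apply and deliver both $K>0$ and the product decomposition.
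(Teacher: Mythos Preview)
Your proposal is correct and follows exactly the route the paper indicates: the corollary is stated immediately after the sentence ``applying \cref{thm:dualitysharp} to the rigidity results established in \cite{Han-JMPA21} (see Proposition 3.13 and Corollary 3.8) yields the following,'' and your argument spells out precisely that combination. The additional paragraph on the bounded/unbounded Lipschitz compatibility is a reasonable elaboration of a technicality the paper flags elsewhere but does not revisit at this corollary.
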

In other words, on spaces of finite total mass, the $2$-Wasserstein contraction can not be sharp for $K \leq 0$.
The next theorem provides an additional rigidity regarding the curvature bound for finite dimensional spaces:
\begin{theorem}\label{thm:Kmustbe0}
	Let $(X,d,\m)$ be either 
	\begin{itemize}
		\item an $\rcd(K,N)$ mms with $K\in\R$ and $N<\infty$; or
		\item a finite dimensional $\rcd(K,\infty)$ mms with $K\geq 0$.
	\end{itemize}
	If it satisfies either $(SC^K_p)$ or $(SG^K_p)$ for some $p\in[1,\infty)$, then $K=0$ and the space splits off a line.
\end{theorem}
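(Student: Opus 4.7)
The plan is to reduce the hypothesis to $(SG^K_\infty)$ via duality, then produce a line splitting $X\cong X'\times\R^1_K$, and finally rule out $K\neq 0$ by observing that the Gaussian line factor is incompatible with the assumed finite-dimensionality.

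Since the underlying space is finite-dimensional in both alternatives of the hypothesis, \cref{cor:sharpN} combined with \cref{lemma:sharpconditions} upgrades the hypothesis to both $(SC^K_2)$ and $(SG^K_\infty)$. I would then distinguish cases by the sign of $K$ to produce the splitting. For $K\geq 0$, the splitting $X\cong X'\times\R^1_K$ follows from \cref{thm:dualitysharp} combined with the rigidity theorems of Ambrosio--Brué--Semola \cite{ABS19Gafa} (for $K=0$) and Han \cite{Han-JMPA21} (for $K>0$), exactly as in \cref{cor:sharpK=0} and the subsequent corollary on finite-mass spaces. The remaining regime $K<0$, which may only occur under $\rcd(K,N)$ with $N<\infty$, I would treat via the dimensional Bochner inequality. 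Writing $g\coloneqq P_tf$ for the sharp Lipschitz function $f$ from $(SG^K_\infty)$, the identity $\Gamma(g)\equiv e^{-2Kt}\|\nabla f\|_\infty^2$ yields $\Delta\Gamma(g)\equiv 0$ and $\langle\nabla g,\nabla\Delta g\rangle=\tfrac12\partial_t\Gamma(g)=-K\Gamma(g)$, so $\Gamma_2(g)=K\Gamma(g)$. Substituting into $\Gamma_2(g)\geq K\Gamma(g)+(\Delta g)^2/N$ forces $\Delta g\equiv 0$ and equality in the dimensional Bochner, which by Gigli's non-smooth second-order calculus gives that the Hessian of $g$ vanishes weakly; an $\rcd$-analogue of \cref{prop:splitmesaure} then produces the same splitting $X\cong X'\times \R^1_K$ with $f_1''\equiv K$ on the line factor.

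With the splitting in hand, $K=0$ would follow from a one-dimensional curvature-dimension computation on the line factor $(\R,|\cdot|,e^{-f_1}\mathcal{L}^1)$. Any $\rcd(K',N')$ lower bound with $N'<\infty$ would demand $f_1''-(f_1')^2/(N'-1)\geq K'$ pointwise; but $f_1''\equiv K\neq 0$ gives $f_1'(r)=Kr+c$, so the left-hand side tends to $-\infty$ as $|r|\to\infty$, and no finite lower bound can hold. Since finite-dimensionality passes to tensor factors in the $\rcd$ category, this contradicts the finite-dimensionality of $X$. Hence $K=0$, at which point \cref{cor:sharpK=0} delivers the conclusion that $X$ splits off a line.

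The principal obstacle is the case $K<0$ with $N<\infty$, which is not covered by the rigidity results of \cite{ABS19Gafa} or \cite{Han-JMPA21}. One is thus forced through the Bochner-rigidity route above, which relies both on Gigli's non-smooth second-order calculus to extract vanishing Hessian from the equality case of Bochner, and on a non-smooth analogue of \cref{prop:splitmesaure} to convert vanishing Hessian into a metric-measure splitting; both ingredients are delicate to justify cleanly in the negative-curvature regime.
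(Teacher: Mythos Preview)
Your overall strategy---split off $\R^1_K$ first, then rule out $K\neq 0$ by showing the Gaussian line factor cannot be finite-dimensional---differs from the paper's, which in the $\rcd(K,N)$ case deduces $K=0$ \emph{before} any splitting, using displacement $(K,N)$-convexity of entropy. Concretely, the paper takes the Wasserstein geodesic $(\mu^a)=(p_s(\gamma^a,\d\m))$ from \cref{prop:geodesic}, shows via \cref{lemma:linearEnt} that $E(a)\coloneqq\ent(\mu^a)$ satisfies $E''\equiv KW^2_2(\mu^0,\mu^1)$, and then feeds this into the $(K,N)$-convexity inequality $E''\geq KW^2_2+E'(a)^2/N$ from \cite{EKS15} to obtain $E'\equiv 0$, whence $E''=0$ and $K=0$. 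Once $K=0$, the splitting comes for free from \cref{cor:sharpK=0}. This completely bypasses the $K<0$ case you flag as problematic.

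For the second bullet ($K>0$, finite-dimensional $\rcd(K,\infty)$), the paper follows a route close in spirit to yours: Han's result provides the splitting $X\cong\R^1_K\times Y$, and then the contradiction with finite-dimensionality is obtained not through the pointwise one-dimensional $\cd$ inequality on the factor but by running the same entropy argument on the product: the curve $\R\ni a\mapsto p_t((a,y),\d\m)$ is a Wasserstein geodesic on $X$ along which sharp contraction holds, so $E$ is globally quadratic with $E''=KW^2_2(\mu^0,\mu^1)>0$, while $(K',N)$-convexity on $X$ gives the uniform bound $(K-K')W^2_2\geq E'(a)^2/N$---impossible since $|E'(a)|\to\infty$. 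Your direct computation that $\R^1_K$ with $K\neq 0$ cannot carry any $\rcd(K',N')$ bound with $N'<\infty$ is correct and arguably slicker here, though it additionally requires that finite-dimensionality passes to tensor factors, which is true but not entirely free.

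The genuine gap in your proposal is precisely the one you identify: the $K<0$, $\rcd(K,N)$ case. Your Bochner computation $\Gamma_2(g)=K\Gamma(g)$ is formally correct and does force $\Delta g\equiv 0$, but converting equality in the non-smooth dimensional Bochner into an isometric splitting $X\cong X'\times\R^1_K$ with the correct weight on the line factor---i.e., an $\rcd$ analogue of \cref{prop:splitmesaure}---is not available in the literature for $K<0$. The paper's entropy-based argument is designed exactly to avoid this: it never needs to split when $K\neq 0$, only to reach a contradiction.
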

We postpone the proof to the end of the subsection, as it relies on two auxiliary results on spaces satisfying sharp Wasserstein contraction.
\begin{lemma}\label{prop:geodesic}
	Let $(X,d,\m)$ be an $\rcd(K,\infty)$ mms for some $K\in\R$.
	If $(X,d,\m)$ satisfies $(SC^K_2)$ for some $x\neq y\in X$ and $t>0$, then for all $s\in (0,t)$, any geodesic $(\gamma^a)_{a\in[0,1]}$ (if exists) from $x$ to $y$, and all $0\leq a<b\leq 1$
	\begin{equation}\label{eq:sharpWgeo}
		W_2(p_s(\gamma^a,\d\m ),p_s(\gamma^b,\d\m))=e^{-Ks}d(\gamma^a,\gamma^b).
	\end{equation}
	In particular, for all $s\in [0,t]$, $(p_s(\gamma^a,\d \m))_{a\in[0,1]}$ is a $2$-Wasserstein geodesic.
\end{lemma}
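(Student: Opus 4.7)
The plan is to prove the statement in two independent moves: first, propagate the sharp contraction from the assumed time $t$ backward to every intermediate time $s\in(0,t)$; second, extend the sharpness from the endpoint pair $(x,y)$ to any pair $(\gamma^a,\gamma^b)$ along the geodesic using the triangle inequality in the $W_2$ space. Both moves are elementary once one invokes the $L^2$-semigroup property of the heat flow and notes that equality in a chain of non-negative bounds forces equality in each term.

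For the first move, I would use the semigroup identity $p_t(z,\d\m)=\mathcal{H}_{t-s}(p_s(z,\d\m))$ valid for every $z\in X$, where $\mathcal{H}_r$ denotes the Wasserstein heat flow. Applying the $2$-Wasserstein contraction over a time interval of length $t-s$ yields
\[
W_2(p_t(x,\d\m),p_t(y,\d\m))\leq e^{-K(t-s)}W_2(p_s(x,\d\m),p_s(y,\d\m)),
\]
and chaining with the contraction $W_2(p_s(x,\d\m),p_s(y,\d\m))\leq e^{-Ks}d(x,y)$ produces the sharp upper bound $e^{-Kt}d(x,y)$. The hypothesis $(SC^K_2)$ makes the two extremes equal, so both intermediate inequalities must be equalities; in particular $W_2(p_s(x,\d\m),p_s(y,\d\m))=e^{-Ks}d(x,y)$ for every $s\in(0,t)$.

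For the second move, fix $s\in(0,t)$ and $0\leq a<b\leq 1$, and apply the triangle inequality in $(\mathcal{P}_2(X),W_2)$ to the three-term decomposition through $p_s(\gamma^a,\d\m)$ and $p_s(\gamma^b,\d\m)$. Each of the three summands is controlled by $e^{-Ks}$ times the corresponding metric distance, and because $\gamma$ is a constant speed geodesic, the three distances $d(x,\gamma^a),d(\gamma^a,\gamma^b),d(\gamma^b,y)$ sum to $d(x,y)$. Thus the right-hand side is bounded by $e^{-Ks}d(x,y)$, which by the first move matches the left-hand side. Equality is therefore forced in each summand, giving the desired $W_2(p_s(\gamma^a,\d\m),p_s(\gamma^b,\d\m))=e^{-Ks}d(\gamma^a,\gamma^b)$.

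The geodesic statement then follows by rewriting this equality as $(b-a)\cdot e^{-Ks}d(x,y)=(b-a)W_2(p_s(\gamma^0,\d\m),p_s(\gamma^1,\d\m))$, which is exactly the constant speed $2$-Wasserstein geodesic condition. The boundary cases $s=0$ and $s=t$ are handled by the triviality $W_2(\delta_{\gamma^a},\delta_{\gamma^b})=d(\gamma^a,\gamma^b)$ and by the same triangle-inequality argument applied directly to the sharp bound at time $t$, respectively. There is no substantive obstacle here: the entire argument rests on the semigroup composition law and equality cases in the triangle inequality, both of which are available without any additional regularity of the geodesic $\gamma$ beyond its constant-speed parametrization.
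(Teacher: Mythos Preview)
Your proof is correct and follows essentially the same approach as the paper: both arguments rest on the semigroup contraction $W_2(p_t(\cdot),p_t(\cdot))\leq e^{-K(t-s)}W_2(p_s(\cdot),p_s(\cdot))$, the contraction $W_2(p_s(\cdot),p_s(\cdot))\leq e^{-Ks}d(\cdot,\cdot)$, the triangle inequality in $W_2$, and the geodesic identity $d(x,\gamma^a)+d(\gamma^a,\gamma^b)+d(\gamma^b,y)=d(x,y)$, concluding by the equality-in-a-chain principle. The only difference is organizational: you first propagate sharpness in time at the endpoints and then spread it along the geodesic at the fixed time $s$, whereas the paper writes a single three-line chain (triangle inequality at time $t$, then contraction $t\to s$ termwise, then contraction $s\to 0$ termwise) and reads off all the equalities at once.
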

\begin{proof}
	We use the short-hand notation $\mu^a_s\coloneqq p_s(\gamma^a,\d\m)$ for $s\in[0,t]$ and $a\in[0,1]$.
	By the Wasserstein contraction, we have for all $0\leq a\leq b\leq 1$
	\begin{align}
		W_2(\mu^0_t,\mu^1_t)\leq& W_2(\mu^0_t,\mu^a_t)+W_2(\mu^a_t,\mu^b_t) +W_2(\mu^b_t,\mu^1_t)\\
		\leq &e^{-K(t-s)}\left(W_2(\mu^0_s,\mu^a_s)+W_2(\mu^a_s,\mu^b_s) +W_2(\mu^b_s,\mu^1_s)\right)\\	
		\leq& e^{-Kt}\left(d(\gamma^0,\gamma^a)+d(\gamma^a,\gamma^b)+d(\gamma^b,\gamma^1)\right)=e^{-Kt}d(x,y).
	\end{align}
	By assumption all inequalities in the above have to be equalities.
\end{proof}

\begin{proposition}\label{lemma:linearEnt}
	Let $(X,d,\m)$ be either 
	\begin{itemize}
		\item a weighted smooth manifold satisfying $\rcd(K,\infty)$; or
		\item a finite dimensional $\rcd(K,\infty)$ mms,
	\end{itemize}
	for some $K\in\R$.
	Let $(\gamma^a)_{a\in[0,1]}$ be a geodesic along which \eqref{eq:sharpWgeo} holds. 
	Then $[0,1]\ni a\mapsto \ent(p_s(\gamma^a,\d\m))$ is a quadratic function i.e.
	\begin{equation}\label{eq:entquadratic}
		\ent(p_s(\gamma^a,\d\m))-	KW^2_2(p_s(x,\d\m),p_s(y,\d\m)) \cdot \frac{a^2}{2}
	\end{equation}
	is linear in $a$.
\end{proposition}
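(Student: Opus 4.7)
The target is to show that $\Phi(a) := \ent(\mu^a_s) - \tfrac{K}{2}a^2 D$ is affine in $a$, where I abbreviate $\mu^a_s := p_s(\gamma^a,\cdot)\m$ and $D := W_2^2(\mu^0_s,\mu^1_s) = e^{-2Ks}d(x,y)^2$. Convexity of $\Phi$ is immediate: by \cref{prop:geodesic}, $a \mapsto \mu^a_s$ is a constant-speed $W_2$-geodesic, and the $K$-geodesic convexity of the Boltzmann entropy on $\rcd(K,\infty)$ spaces, applied on any subinterval $[a_0,a_1]\subset[0,1]$ with $c=(1-\tau)a_0+\tau a_1$, gives
\[
\ent(\mu^c_s) \leq (1-\tau)\ent(\mu^{a_0}_s) + \tau\ent(\mu^{a_1}_s) - \tfrac{K}{2}\tau(1-\tau)(a_1-a_0)^2 D.
\]
The elementary identity $(1-\tau)a_0^2 + \tau a_1^2 - \tau(1-\tau)(a_1-a_0)^2 = c^2$ converts this into $\Phi(c) \leq (1-\tau)\Phi(a_0) + \tau\Phi(a_1)$, so $\Phi$ is convex.

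The matching concavity is the hard direction. On a smooth weighted manifold, I would use the Otto--Villani Hessian formula for the entropy along Wasserstein geodesics,
\[
\frac{d^2}{da^2}\ent(\mu^a_s) = \int \|\Hess\phi^a\|^2\, \d\mu^a_s + \int \Ric_\m(\nabla\phi^a,\nabla\phi^a)\, \d\mu^a_s,
\]
where $\phi^a$ is the intermediate-time Kantorovich potential from \cref{lemma:interPotential}. The sharp contraction, combined with the slope/uniqueness argument in the proof of statement \eqref{item3} in \cref{thm:dualitysharp}, yields $|\nabla\phi^a|^2 \equiv D$ almost everywhere, and $\Ric_\m \geq Kg$ then gives $\partial_a^2\ent(\mu^a_s) \geq KD$. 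To force equality, the plan is to exploit the rigidity of EVI$_K$ under sharp Wasserstein contraction: EVI$_K$ applied at the two heat flows starting from $\mu^0_s$ and $\mu^1_s$ must saturate on account of the sharp contraction, and via the cosine-formula characterization of EVI$_K$, this saturation forces the gradient of the entropy to align with the Kantorovich potential along the entire geodesic. Plugging this alignment back into the Hessian formula gives both $\int\|\Hess\phi^a\|^2\,\d\mu^a_s = 0$ and $\int\Ric_\m(\nabla\phi^a,\nabla\phi^a)\,\d\mu^a_s = KD$ for a.e.\ $a$, hence $\partial_a^2\ent \equiv KD$ and $\Phi$ is affine. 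The finite-dimensional $\rcd(K,\infty)$ case is treated analogously, using the dimensional refinement of displacement convexity from \cite{EKS15} to supply the missing Bochner-type rigidity without smooth charts.

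The main obstacle is making the ``rigidity of EVI$_K$ under sharp Wasserstein contraction'' step precise and transferring it into pointwise information on the intermediate Kantorovich potentials $\phi^a$, in particular the vanishing of $\int \|\Hess\phi^a\|^2\,\d\mu^a_s$ for a.e.\ $a$. On a smooth manifold this is tractable via standard Otto-calculus computations combined with the strong EVI$_K$ inequality \eqref{eq:EVI}; in the finite-dimensional $\rcd$ setting it relies on the regularity and Bochner-type rigidity afforded by the EKS apparatus. Once this step is carried out, the desired quadratic form of the entropy follows immediately from the two-sided bound $\partial_a^2\ent \equiv KD$, completing the proof.
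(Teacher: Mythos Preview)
Your convexity argument is fine, and you correctly identify that the matching upper bound is the real content. But your proposed route to it has a genuine gap: the ``rigidity of EVI$_K$ under sharp contraction'' step is never made precise, and the conclusion you aim for—$\int\|\Hess\phi^a\|^2\,\d\mu^a_s=0$—is strictly \emph{stronger} than what the proposition asserts. Vanishing Hessian is exactly the statement used later in Section~4 to prove the splitting theorem; proving it here would amount to proving the main rigidity theorem in order to deduce this lemma. You also do not explain how saturation of EVI$_K$ (an inequality along the \emph{time} variable of the heat flow with a fixed comparison measure) would yield pointwise alignment of $\nabla\log\rho^a$ with $\nabla\phi^a$ along the \emph{spatial} geodesic parameter $a$; as stated, this is not a known mechanism. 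Finally, even on smooth manifolds the second-derivative formula you write down requires justification on a noncompact space (the paper handles this later via an exhaustion on the path space); invoking it here is premature.

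The paper avoids all of this by a first-derivative argument that you are missing. The key observation is that by the sharp contraction hypothesis, the \emph{time} derivative
\[
-\frac{\d}{\d s}\,\tfrac{1}{2}W_2^2\big(p_s(x,\d\m),p_s(y,\d\m)\big)=KW_2^2(\mu^0_s,\mu^1_s)
\]
is known exactly. On the other hand, by the derivative formula for the Wasserstein distance along heat flows (Villani \cite[Theorem~23.9]{Villani} in the smooth case, \cite[Theorems~6.3,~6.5]{Ambrosio-Gigli-Mondino-Rajala} in the $\rcd$ case), this same quantity equals $\int\nabla\phi^0\cdot\nabla\rho^0\,\d\m-\int\nabla\phi^1\cdot\nabla\rho^1\,\d\m$. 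These two integrals in turn bound, respectively, minus the right $a$-derivative of $\ent$ at $a=0$ and minus the left $a$-derivative at $a=1$ (via \cite[Theorem~23.14]{Villani} and its reversed version). Stringing these together with your $K$-convexity inequality closes the chain:
\[
KW_2^2=\int\nabla\phi^0\cdot\nabla\rho^0-\int\nabla\phi^1\cdot\nabla\rho^1\;\geq\;(\partial_a^-\ent)(1)-(\partial_a^+\ent)(0)\;\geq\;KW_2^2,
\]
forcing equalities throughout. Then \cref{remark:linear} gives affinity of $\Phi$. No second-derivative computation, no Bochner, no Hessian rigidity is needed at this stage.
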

\begin{proof}
	Denote for all $a\in[0,1]$ $\mu^a\coloneqq p_s(\gamma^a,\d\m)$ and $\rho^a(z)\coloneqq p_s(\gamma^a,z)$ for short.
	Let $(\phi^a)_{a\in[0,1]}$ be the family of intermediate-time Kantorovich potentials given as in \cref{lemma:interPotential}.
	
	As the sharp contraction estimate \eqref{eq:sharpWgeo} is fulfilled along the whole curve $\gamma$, we know from the proof of \cref{thm:dualitysharp}\eqref{item3} that $\phi^a$ is Lipschitz for all $a\in[0,1]$ with $\Lip(\phi^a)\leq e^{-Ks}d(\gamma^0,\gamma^1)$.
	\smallskip
	
	\textbf{Part 1}: $X$ is a smooth manifold.
	
	Rewriting the heat equation, we see for each $a\in[0,1]$ that $(p_\tau(\gamma^a,\d \m))_{\tau>0}$ solves the continuity equation with velocity field given by $-\nabla\log \rho^a_\tau$.
	Moreover, the quantity $\|\nabla\log\rho^a\|^2_{L^2(\mu^a)}$ coincides the Fisher information $F(\rho^a)$, which is locally bounded in time by \cref{prop:ME}.
	Hence, we may apply \cite[Theorem 23.9]{Villani} to obtain\footnote{Note that our sign convention for the Kantorovich duality differs from that used in \cite{Villani}.}
	\begin{align}\label{ineq:WdisalongHF}
		-\frac{\d}{\d s} \left(\frac12 W^2_2(p_s(x,\d\m),p_s(y,\d\m))\right)=\int_M \nabla\phi^0\cdot \nabla  p_s(x,\cdot)\d \m-\int_M \nabla\phi^1\cdot\nabla p_s(y,\cdot)\d \m.
	\end{align}
	Along the Wasserstein geodesic $(\mu^a)$, applying \cite[Theorem 23.14]{Villani} gives
	\begin{equation}\label{ineq:subDent1}
		\liminf_{h\searrow 0} \frac1h\left(\ent(\rho^{a+h})-\ent(\rho^a)\right)\geq \int_M -\nabla\phi^a\cdot \nabla \rho^a\d\m.
	\end{equation}
	Similarly, by considering the reverse curve $\bar\mu^a=\mu^{1-a}$ with the associated backward potentials $\bar\phi^a$ given by \cref{lemma:interPotential}, one obtains
	\begin{equation}\label{ineq:subDent2}
		\liminf_{h\searrow 0} \frac1h\left(\ent(\rho^{a-h})-\ent(\rho^{a})\right)\geq \int_M -\nabla\bar\phi^{1-a}\cdot \nabla \rho^a\d\m=\int_M \nabla\phi^a\cdot \nabla \rho^a\d\m.
	\end{equation}
	Thus combining \eqref{ineq:subDent1} and \eqref{ineq:subDent2}, with \cref{prop:geodesic} and $K$-convexity of entropy along Wasserstein geodesics, we have
	\begin{align}
		K\cdot W^2_2(p_s(x,\d\m),p_s(y,\d\m))&=	-\frac{\d}{\d s} \left(\frac12 W^2_2(p_s(x,\d\m),p_s(y,\d\m))\right)\\
		&=\int_M \nabla\phi^0\cdot \nabla  p_s(x,\cdot)\d \m-\int_M \nabla\phi^1\cdot\nabla p_s(y,\cdot)\d \m\\
		&\geq \limsup_{h\searrow 0} \frac1h\left(\ent(\rho^{1})-\ent(\rho^{1-h})\right)-\liminf_{h\searrow 0} \frac1h\left(\ent(\rho^{h})-\ent(\rho^{0})\right)\\
		&\geq 	K\cdot W^2_2(p_s(x,\d\m),p_s(y,\d\m)),
	\end{align}
	The function \eqref{eq:entquadratic} of $a$ then has to be linear by \cref{remark:linear}, and the inequalities in \eqref{ineq:subDent1} and \eqref{ineq:subDent2} have to be equalities.
\smallskip

\textbf{Part 2}: $(X,d,\m)$ is a finite dimensional $\rcd(K,\infty)$ mms.

The inequalities \eqref{ineq:WdisalongHF} and \eqref{ineq:subDent1} remain valid for a.e.\ $s$ in the $\operatorname{RCD}$ setting, as established in \cite[Theorems 6.3 and 6.5]{Ambrosio-Gigli-Mondino-Rajala}. 
Therefore, the proof follows by the same line of argument used in the smooth setting.

We remark that in \cite{Ambrosio-Gigli-Mondino-Rajala}, the results are stated under the assumption that one marginal of the transport plan has compact support. 
However, in our setting, all Kantorovich potentials $\phi^a$ are known to be Lipschitz, and the associated densities are given by heat kernels, which are uniformly bounded and satisfy Gaussian estimates. 
These properties allow the arguments in \cite{Ambrosio-Gigli-Mondino-Rajala} to extend naturally to our case.
\end{proof}
\begin{proof}[Proof of \cref{thm:Kmustbe0}]
By \cref{cor:sharpN}, it is enough to consider that $(X,d,\m)$ satisfies the sharp contraction condition $(SC^K_2)$.
As an $\rcd(K,N)$ space, $(X,d)$ is geodesic.
Let $(\gamma^a)_{a\in[0,1]}$ be a geodesic s.t. \eqref{eq:sharpWgeo} holds for some $s>0$.

The condition of $\rcd(K,N)$ strengthens $K$-displacement convexity by requiring the entropy to be $(K,N)$-convex along Wasserstein geodesics; see \cite{EKS15}.
In particular, the function $a\mapsto E(a)\coloneqq \ent(p_s(\gamma^a,\d\m))$ satisfies 
\begin{equation}\label{ineq:KNconvex}
	E''(a)\geq KW^2_2(p_s(\gamma^0,\d\m),p_s(\gamma^1,\d\m))+\frac{1}{N}E'(a)^2.
\end{equation}
By \cref{lemma:linearEnt}, $E'(a)\equiv 0$, which is possible only when $K=0$.
\smallskip

Next we show that the sharp gradient estimate $(SG^K_1)$  can not be attained on any finite dimensional $\rcd(K,\infty)$ mms $(X,d,\m)$ with $K>0$.
If equality were attained, by \cite[Proposition 3.13]{Han-JMPA21}, there exists an $\rcd(K,\infty)$ mms $(Y,d_Y,\m_Y)$ s.t. $(X,d,\m)$ is isomorphic to $\R^1_K\times (Y,d_Y,\m_Y)$.

Denote for any $a\in\R$ by $(\eta^a_t)_{t\geq 0}$ the heat flow on $\R^1_K$ starting from $\delta_a $.
As in Part 1 of the proof of \cref{thm:dualitysharp}\eqref{item4}, and using the fact that the linear function $a \mapsto a$ attains the sharp gradient estimate, it follows that the sharp Wasserstein contraction holds along the entire family $(\eta^a_t)_{a \in \mathbb{R}}$, i.e.
\[
W_1(\eta^a_t, \eta^b_t) = W_2(\eta^a_t, \eta^b_t)=e^{-K t} |a - b|.
\]

Now fix any $y \in Y$, and consider the curve of heat flows on $X=\R\times Y$
\[
\R\ni a\mapsto \mu^a\coloneqq p_t((a,y),\d\m).
\]
Observe that $a\mapsto \mu^a$ is a $2$-Wasserstein geodesic on $(X,d,\m)$.
Indeed, let $T^b_a:\R\to \R$ be the optimal transport map from $\eta^a$ to $\eta^b$. 
Then, the product map $(T^b_a, \mathrm{id}_Y)$ pushes $\mu^a$ forward to $\mu^b$, due to the tensorization of heat flows (see \cite[Theorem 6.19]{AGSRCD2014}).  
Moreover, this map is optimal by checking directly the cyclical monotonicity.

As a consequence,
\[
W_2(\mu^a,\mu^b)=W_2(\eta^a_t,\eta^b_t)=e^{-K t}|a-b|
\]
so the sharp Wasserstein contraction holds along the entire curve $(\mu^a)_{a \in \mathbb{R}}$.

Applying \cref{prop:geodesic} to every interval in $\mathbb{R}$, we conclude on $\R$ that 
\begin{equation}
	E(a)\coloneqq \ent(\mu^a)=\frac{K}{2}W^2_2(\mu^0,\mu^1)a^2+\text{a linear function of $a$}.
	\end{equation}
Now assume that $(X,d,\m)$ also satisfies $\rcd(K',N)$ for $K'\in\R$ and $N<\infty$. 
Then the $(K',N)$-convexity of the entropy implies (see \eqref{ineq:KNconvex})
\[
(K-K')W^2_2(\mu^0,\mu^1)\geq \frac{1}{N}(E'(a))^2,\quad \forall a\in\R.
\]
However, this leads to a contradiction since $|E'(a)| \to \infty$ as $a \to \infty$.
\end{proof}
We conjecture that the rigidity of curvature bounds in \cref{thm:Kmustbe0} holds for all finite dimensional $\rcd(K,\infty)$ and $K\in\R$.  
Indeed, at least in the smooth setting, the rigidity cases for sharp Wasserstein contraction w.r.t. both positive and negative $K$ exhibit the same structural phenomenon: the space splits off a factor $\R^1_K$, which does not carry any finite synthetic dimension; see \cref{thm:sharpness}.

\section{rigidity of Wasserstein contraction on smooth manifolds}\label{sec:proofsharpness}
In this section, we provide a direct proof of the following rigidity theorem on Wasserstein contraction in the smooth setting.
\begin{theorem}\label{thm:sharpness}
	Let $(M,g,\m)$ be a weighted smooth manifold satisfying \cref{asm:graHK} and $\Ric_\m\geq K$ for $K\in\R$.
	If equality in \eqref{eq:WassersteinK} holds for some $x\neq y\in M$ and $t>0$, then there exist a weighted complete manifold $(N,g_N,\n)$ and a function $f_1\colon \R\to \R$ s.t. 
	\begin{equation}\label{eq:splitting}
		(M,g,\m)=(\R,|\cdot|,e^{-f_1}\mathcal{L}^1)\times (N,g_N,\n).
	\end{equation}
	Moreover, $f_1''\equiv K$ on $\R$ and $\Ric_\n\geq K$.
\end{theorem}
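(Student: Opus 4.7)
The plan is to combine \cref{prop:geodesic}, which promotes sharpness at a single pair to sharpness along a whole geodesic and at all earlier times, with \cref{lemma:linearEnt}, which upgrades this to the entropy being a quadratic function of the geodesic parameter, and then to pass this rigidity to a Kantorovich potential via the classical second-derivative formula of entropy along $W_2$-geodesics. The splitting result \cref{prop:splitmesaure} will then do the last step.

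More precisely, let $(\gamma^a)_{a\in[0,1]}$ be a unit-parametrized $g$-geodesic joining $x$ to $y$, and set $\mu^a\coloneqq p_s(\gamma^a,\d\m)=\rho^a\m$ for a fixed $s\in(0,t)$. By \cref{prop:geodesic}, $(\mu^a)_{a\in[0,1]}$ is a $W_2$-geodesic and sharpness holds along every sub-interval. By \cref{lemma:linearEnt} and its proof, the map $a\mapsto \ent(\mu^a)-\tfrac{K}{2}W_2^2(\mu^0,\mu^1)a^2$ is affine, so its second distributional derivative in $a$ equals $KW_2^2(\mu^0,\mu^1)$. On the other hand, by \cref{thm:dualitysharp}\eqref{item3} the Kantorovich potentials $\phi^a$ given by \cref{lemma:interPotential} are Lipschitz. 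Since $\mu^0=p_s(x,\d\m)$ is absolutely continuous with a smooth positive density, the Brenier–McCann theorem on manifolds yields the optimal transport map $x\mapsto \exp_x(-\nabla\phi^0(x))$ and gives $\phi^0$ semiconcavity, hence twice differentiable almost everywhere; analogous statements hold at each intermediate time $a$.

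Next I would invoke the classical Otto calculus for smooth weighted manifolds (in the form used for instance in the proof of the Bakry–Émery criterion from displacement convexity) to obtain the second variation identity
\begin{equation}\label{eq:proposal-Otto}
\frac{\d^2}{\d a^2}\ent(\mu^a)=\int_M \bigl(\|\nabla^2 \phi^a\|_{\mathrm{HS}}^2+\Ric_\m(\nabla\phi^a,\nabla\phi^a)\bigr)\d\mu^a,
\end{equation}
at least in the sense that the right-hand side provides a lower bound for the distributional second derivative. By $\Ric_\m\geq K$ and the first-variation identity already established in the proof of \cref{lemma:linearEnt}, the right-hand side of \eqref{eq:proposal-Otto} is bounded below by $K\int|\nabla\phi^a|^2\d\mu^a=KW_2^2(\mu^0,\mu^1)$, and by the previous paragraph this bound must be saturated for a.e.\ $a\in[0,1]$. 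Saturation forces both $\nabla^2\phi^a\equiv 0$ and $\Ric_\m(\nabla\phi^a)\equiv K|\nabla\phi^a|^2$ on $\spt(\mu^a)$. Since $\mu^a$ has full support (heat kernels are strictly positive), these identities hold everywhere on $M$. By \cref{lemma:Euclidean}, $\phi^a$ is smooth, non-constant (as $\mu^0\neq \mu^1$), and has vanishing Hessian, so \cref{prop:splitmesaure} provides the desired splitting of $(M,g,\m)$ off an $\R^1_K$-factor.

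The main technical obstacle is making \eqref{eq:proposal-Otto} rigorous under the regularity actually available: a priori $\phi^a$ is only Lipschitz and semiconcave, and $\mu^a$ is a heat kernel on a possibly non-compact unbounded manifold. I intend to handle this by fixing a compact exhaustion $\{K_n\}$, applying the Monge–Amp\`ere change-of-variables formula to the transport map $T^b_a$ to expand $\ent(\mu^b)-\ent(\mu^a)$ as an integral of $\log$ of Jacobian factors on each $K_n$, and then using a Jacobi-field computation controlled by $\Ric_\m\geq K$ to produce \eqref{eq:proposal-Otto} in integrated form between any $0\leq a<b\leq 1$, exactly as in the standard proof that $(M,g,\m)$ satisfies the Bakry–Émery $K$-displacement convexity. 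The integrability required to pass to the limit $n\to\infty$ is guaranteed by \cref{prop:ME}, the uniform Lipschitz bound $\Lip(\phi^a)\le e^{-Ks}d(x,y)$, and the Gaussian-type control on heat kernels provided by \cref{thm:weightedInt}.
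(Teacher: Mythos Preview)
Your high-level plan coincides with the paper's: promote sharpness along a geodesic via \cref{prop:geodesic}, extract the quadratic entropy via \cref{lemma:linearEnt}, pass the second derivative of entropy to a Bochner-type expression, and read off $\nabla^2\phi^a\equiv 0$ and $\Ric_\m(\nabla\phi^a)\equiv K|\nabla\phi^a|^2$ so that \cref{prop:splitmesaure} applies.

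Where you diverge from the paper is in the rigorous implementation of the Otto step. The paper does not stop at semiconcavity of $\phi^a$: it observes that each $\phi^a$ solves the elliptic continuity equation $\mathrm{div}_\m(\rho^a\nabla\phi^a)=\partial_a\rho^a$ with smooth coefficients (heat kernels on a smooth manifold), and hence is \emph{smooth} by elliptic regularity; together with the Hamilton--Jacobi equation this makes the Eulerian Bochner computation clean. The paper then exhausts not $M$ but the \emph{path space}: it restricts the optimal dynamical plan $\pi$ to geodesics $\eta$ with both endpoints in $B(x_0,L)$, computes $\partial_a\int\Delta\phi^a(\eta^a)\,\d\pi_L$ exactly on these compactly contained families, and passes to the limit $L\to\infty$. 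The key integrability making this limit work is $\Delta\phi^a\rho^a\in L^1(\m)$, which the paper obtains from $\partial_a\rho^a\in L^1(\m)$; this is precisely where \cref{asm:graHK} enters, via \cref{lemma:nonsymintegral}, which shows $|\nabla_x p_t(x,\cdot)|\in L^1(\m)$.

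Your Lagrangian route (Monge--Amp\`ere change of variables and Jacobi fields along the transport, exhausting $M$ by compacta) is a reasonable alternative and in principle yields the displacement-convexity inequality whose equality case you want to analyse. But two points deserve attention. First, you never explain where \cref{asm:graHK} is used; in the paper it is essential for the $L^1$-control of $\partial_a\rho^a$, and if your argument genuinely bypasses it you should say so and check carefully that no analogous integrability is hidden in your limit $K_n\uparrow M$. Second, the paper explicitly prefers exhausting the path space over exhausting $M$, because restricting $\pi$ to $\Gamma_L$ confines the entire evolution $a\mapsto\eta^a$ to a fixed compact set where all derivatives of $\phi^a$ are bounded; a naive exhaustion of $M$ does not give you this uniform-in-$a$ compact containment, so the dominated-convergence step in your last paragraph needs more than the three ingredients you list.
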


\begin{assumption}\label{asm:graHK}
	A mms $(X,d,\m)$ verifies \cref{asm:graHK} if at least one of the following holds
	\begin{enumerate}
		\item the heat flow is ultracontractive i.e. $P_t$ is a bounded linear map from $L^1(X,\m)$ to $L^{2}(X,\m)$ for each $t>0$;
		\item $\m$ is finite i.e. $\m(X)<\infty$.
	\end{enumerate}
\end{assumption}
We remark that this assumption is imposed for technical reasons (see \cref{lemma:nonsymintegral} below). 
It is always satisfied, for example, if \( K > 0 \) or if the underlying space satisfies an extra synthetic finite dimensionality condition.

Throughout, we consider only the weighted Laplacian and divergence, which for any smooth function $\phi$ on $(M,g,e^{-f}\mathrm{vol}_g)$ are given by $\Delta \phi=\mathrm{div}_\m \nabla \phi\coloneqq \Delta_g \phi -\nabla \phi\cdot \nabla f$ where $\Delta_g$ is the Laplace-Beltrami operator on $(M,g)$.
\subsection{Proof of \texorpdfstring{\cref{thm:sharpness}}{Theorem 4.1}}
The proof is divided into two parts: a formal discussion presenting the heuristics, followed by a rigorous argument.

\textbf{Part 1}: preparation and heuristics.

Assume that equality in \eqref{eq:WassersteinK} holds for some $x\neq y\in M$ and $t>0$.
Let $(\gamma^a)_{a\in[0,1]}$ be a geodesic from $x$ to $y$. 
Denote $\mu^a\coloneqq p_t(\gamma^a,\d\m)$ and $\rho^a(z)\coloneqq p_t(\gamma^a,z)$ for $a\in[0,1]$.

Let $(\phi^a)_{a\in[0,1]}$ be the family of intermediate-time potentials associated with $(\mu^a)_{a\in[0,1]}$, given by \eqref{eq:IntKantorovich}.
Note that \(\partial_a \rho^a\) is smooth on \(X\), and for each \(a \in (0,1)\) the function \(\phi^a\) solves the following elliptic PDE weakly on the smooth manifold \(X\):
\begin{equation}\label{eq:CE0323}
	\mathrm{div}_{\m}(\rho^a\nabla\phi^a)=\partial_a\rho^a,
\end{equation}
which by elliptic regularity ensures that \(\phi^a\) is smooth on \(X\).  
Moreover, \(\phi^a\) is smooth in \(a\) as the solution of the PDE depends smoothly on its coefficients. 
Theorem 3.6 in \cite{AGSInvention}, together with the smoothness, implies that the map $(a,x)\mapsto \phi^a(x)$ is a classic solution to following Hamilton-Jacobi equation
\begin{equation}\label{eq:HJ}
	\frac{\d}{\d a} \phi^a(x)-\frac12|\nabla\phi^a|^2(x)=0.
\end{equation}
By \cref{lemma:linearEnt} (with also its proof), we have 
\begin{equation}\label{eq:linearityofent}
	\frac{\d }{\d a}\ent(p_t(\gamma^a,\d\m))=-\int_M \nabla\phi^a\cdot \nabla \rho^a\d\m= KW^2_2(\mu^0,\mu^1)\cdot a+\text{constant}.
\end{equation}
Formally applying Otto's calculus (see e.g. \cite[Section 3]{Otto-Villani00} for a formal argument and \cite[Section 4]{Erbar-Li-Schultz25} for a rigorous treatment on compact manifolds) along the geodesic $(\mu^a)$ yields
\begin{align}\label{eq:01/04-1}
	&KW^2_2(\mu^0,\mu^1)=-\frac{\d }{\d a}\int_M \nabla\phi^a\cdot \nabla \rho^a\d\m\\
\overunderset{\eqref{eq:CE0323}}{\eqref{eq:HJ}}{=}&-\int_M \nabla \frac{|\nabla \phi^a|^2}{2}\nabla\rho^a+\nabla\phi^a\cdot \nabla (\nabla\cdot(\rho^a\nabla\phi^a))\d \m\\
	=&\int_M \Delta \frac{|\nabla\phi^a|^2}{2}\rho^a-\nabla\Delta \phi^a\cdot \nabla \phi^a \rho^a\m\\
	=& \int_M \Ric_\m(\nabla\phi^a,\nabla\phi^a)+\|\nabla^2 \phi^a\|_{\mathrm{HS}}^2 \d\mu^a\\
\geq &\int_M K|\nabla \phi^a|^2+\|\nabla^2 \phi^a\|_{\mathrm{HS}}^2 \d\mu^a=	KW^2_2(\mu^0,\mu^1)+\int_M\|\nabla^2 \phi^a\|_{\mathrm{HS}}^2 \d\mu^a.
\end{align}
Then the assumption of \cref{prop:splitmesaure} is satisfied and the space splits.

However, justifying the integrations by parts used above is difficult due to the lack of integrability of higher-order derivatives of $\phi^a$. 
To pass the derivative of the entropy along the Wasserstein geodesic into the potentials $\nabla \phi^a$, we need to localize the transport to bounded subsets.
The key idea is not to apply an exhaustion argument directly on $M$, but rather on the path space associated with the optimal dynamical plan $\pi$ for $(\mu^a)_{a \in [0,1]}$.
\medskip

\textbf{Part 2}: Rigorous implementation via an exhaustion argument on the path space.

Notice first that $\nabla \phi^a\cdot \nabla \rho^a\in L^1(M,\m)$ since
\[
	\int |\nabla\rho^a\cdot \nabla\phi^a|\d\m\leq \left(\int \frac{|\nabla\rho^a|^2}{\rho^a}\d \m\right)^{1/2}\left(\int |\nabla\phi^a|^2\rho^a\d \m\right)^{1/2}<\infty.
\]
Then by \cref{lemma:nonsymintegral},
\[
\|\partial_a \rho^a\|_{L^1}=\int_M |\nabla_x p_t(\gamma^a,y)\cdot \dot\gamma^a|\d \m(y)<\infty
\] 
and then with \eqref{eq:CE0323}, $\Delta\phi^a\rho^a\in L^1(X,\m)$.
Therefore the following integration by parts is valid\footnote{Indeed, take a sequence of functions $\chi_n\in C^{\infty}_c(M)$ with $0\leq \chi_n,|\nabla\chi_n|\leq 1$ and $\chi_n\nearrow 1$ on $M$. One shows the integration by parts first with $\chi_n$ and then passes $n\to \infty$ with the dominated convergence theorem.}
\begin{equation}\label{ineq:23/03-1}
	\int \Delta \phi^a(\eta^a) \d\pi(\eta)=\int\Delta \phi^a\rho^a \d \m=	-\int_M \nabla\phi^a\cdot \nabla \rho^a\d\m.
\end{equation}

Denote by $\pi$ the optimal dynamical plan of $(\mu^a)_{a\in[0,1]}$.
We have for all $a\in[0,1)$ that
\begin{equation}\label{eq:direction}
	\dot{\eta}^a=-\nabla\phi^a(\eta^a), \quad \text{ $\pi$-a.e. $\eta$}.
\end{equation} 
For any fixed $x_0\in M$ and each $L>0$, consider $\Gamma_L$ the set of geodesics $(\eta^a)_{a\in[0,1]}$ on $M$ with $\eta^0,\eta^1\in B(x_0,L)$, $\pi_{L}\coloneqq \frac{\pi\llcorner \Gamma_{L}}{\pi(\Gamma_L)}$ and $\mu^a_L\coloneqq (e_a)_{\#}\pi_{L}$.
Note for each $a\in[0,1)$ that, $(1-a)\phi^a$ is still a Kantorovich potential from $\mu^a_L$ to $\mu^1_L$.

Now we can apply the following Eulerian calculus for all $\pi_L$, for which the whole evolution is contained in a compact set where in particular all functions involved are bounded with bounded derivatives:
\begin{align}
	\partial_a\int \Delta \phi^a(\eta^a) \d\pi_L(\eta)&=\int  	\partial_a\left( \Delta \phi^a(\eta^a)\right) \d\pi_L(\eta)\\
	&=\int \left(\Delta \frac{|\nabla\phi^a|^2}{2}-\nabla\Delta\phi^a\cdot \nabla\phi^a\right)(\eta^a)\d\pi_L(\eta)\\
	&= \int \left(\Ric_{\m}(\nabla\phi^a)+\|\nabla^2\phi^a\|^2_{\mathrm{HS}}\right)(\eta^a)\d\pi_L(\eta).
\end{align}
We in the above have used \eqref{eq:HJ}, \eqref{eq:direction} and the Bochner identity.
By integration
\begin{align}
	\int \Delta \phi^1(\eta^1) \d\pi_L(\eta)-\int \Delta \phi^0(\eta^0) \d\pi_L(\eta)=\int^1_0\int  \left(\Ric_{\m}(\nabla\phi^a)+\|\nabla^2\phi^a\|^2_{\mathrm{HS}}\right)(\eta^a)\d\pi_L(\eta)\d a.
\end{align}
By the dominated convergence theorem with the $L^1(X,\m)$-integrability we have shown for $ \rho^a\Delta\phi^a$, the left-hand side converges as $L\to \infty$, and so does the right-hand side by the monotone convergence theorem with
\[
\Ric_\m(\nabla\phi^a)\geq K|\nabla\phi^a|^2(\eta^a)\in L^1(\pi).
\]
Together with the linearity of \eqref{ineq:23/03-1} from \eqref{eq:linearityofent}, we conclude that $\Ric_{\m}(\nabla\phi^a)\equiv K|\nabla\phi^a|^2$ and $\nabla^2\phi^a\equiv0 $ on $M$ for all $a\in(0,1)$. \qed

\begin{lemma}\label{lemma:nonsymintegral}
	Let $(M,g,\m)$ be a smooth weighted manifold satisfying $\rcd(K,\infty)$ and \cref{asm:graHK}.
	Then for all $x\in M$ and $t>0$, $|\nabla_xp_t(x,y)|\in{L^1_y(\m)}$.
\end{lemma}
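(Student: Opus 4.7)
The goal is to show that $y \mapsto |\nabla_x p_t(x,y)|$ lies in $L^1(M,\m)$. The first step is a Cauchy--Schwarz reduction to a Fisher-information-type quantity: since $p_t(x,\cdot)\m \in \p(M)$,
\begin{equation*}
\left(\int_M |\nabla_x p_t(x,y)|\,\d\m(y)\right)^2 \leq \int_M \frac{|\nabla_x p_t(x,y)|^2}{p_t(x,y)}\,\d\m(y) \cdot \int_M p_t(x,y)\,\d\m(y)=: J(x,t).
\end{equation*}
It therefore suffices to prove $J(x,t)<\infty$.

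To bound $J(x,t)$, I apply the logarithmic Sobolev inequality \eqref{ineq:LSI} to $P_{t/2}$ with the test function $f = p_{t/2}(y,\cdot)$. By the symmetry of the heat kernel together with the Chapman--Kolmogorov identity, $P_{t/2}f(x) = p_t(x,y)$ and $\nabla P_{t/2}f(x) = \nabla_x p_t(x,y)$; hence \eqref{ineq:LSI} reads pointwise
\begin{equation*}
\frac{|\nabla_x p_t(x,y)|^2}{p_t(x,y)} \leq \frac{1}{I_{2K}(t/2)}\bigl[P_{t/2}\bigl(p_{t/2}(y,\cdot)\log p_{t/2}(y,\cdot)\bigr)(x) - p_t(x,y)\log p_t(x,y)\bigr].
\end{equation*}
Integrating in $y$ and swapping the order of integration via Fubini, the second term contributes $-\ent(p_t(x,\cdot)\m)$, finite by \cref{prop:ME}; the first, after using the symmetry $p_{t/2}(y,z)=p_{t/2}(z,y)$, equals $\int_M p_{t/2}(x,z)\,\ent(p_{t/2}(z,\cdot)\m)\,\d\m(z)$.

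The remaining step is an upper bound for this integrated entropy. I use the log-Harnack inequality \eqref{ineq:LHI} at a fixed $y_0\in M$ (chosen in the full-measure set where the inequality holds) to obtain $\ent(p_{t/2}(z,\cdot)\m) \leq \log p_t(z,y_0) + \tfrac{d^2(z,y_0)}{4 I_{2K}(t/2)}$. The $d^2$-term integrates to a multiple of the second moment of $p_{t/2}(x,\cdot)\m$, finite by \cref{prop:ME}. For the $\log$-term, the elementary inequality $\log a \leq a$ together with the semigroup identity $P_{t/2}[p_t(\cdot,y_0)](x) = p_{3t/2}(x,y_0)$ yields $\int_M p_{t/2}(x,z)\log p_t(z,y_0)\,\d\m(z) \leq p_{3t/2}(x,y_0) < \infty$.

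The main technical obstacle is justifying the Fubini exchange of integration in the presence of the signed integrand $p\log p$, which in general lacks a uniform $L^1$ bound in $y$ when $\m(M)=\infty$. This is precisely where \cref{asm:graHK} enters: under ultracontractivity, the heat kernel is uniformly bounded, which dominates the positive part of $p_{t/2}(y,\cdot)\log p_{t/2}(y,\cdot)$ and reduces the negative part to an $L^1$-controllable quantity via Tamanini's weighted estimate \eqref{ineq:weightedInt}; under finiteness of $\m$, the trivial bound $(p\log p)_- \leq 1/e$ is already globally integrable, and combined with the Gaussian lower bound \eqref{ineq:GaussLB} and the local boundedness of entropy from \cref{prop:ME} renders the whole integrand absolutely integrable. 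In either case, the Fubini exchange is legitimate and the argument yields $J(x,t)<\infty$.
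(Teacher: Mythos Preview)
Your proposal follows essentially the same route as the paper's proof: both combine the logarithmic Sobolev inequality \eqref{ineq:LSI} (applied to $f = p_{t/2}(\cdot, y)$) with Cauchy--Schwarz to reduce the problem to controlling $\int_M P_{t/2}\bigl(p_{t/2}(y,\cdot)\log p_{t/2}(y,\cdot)\bigr)(x)\,\d\m(y)$, and then invoke \cref{asm:graHK} together with the log-Harnack inequality and \cref{prop:ME} to finish. The only cosmetic difference is the order of operations: the paper applies LSI first and then Cauchy--Schwarz to the resulting nonnegative right-hand side $F(x,y)$, whereas you apply Cauchy--Schwarz first and integrate the LSI bound in $y$ afterwards.

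One point worth tightening: in the ultracontractive case, your proposed control of the \emph{negative} part of $p\log p$ ``via Tamanini's weighted estimate'' is vague, and it is not clear how \eqref{ineq:weightedInt} would directly give $L^1$-integrability of $(p\log p)_-$ in $y$ when $\m(M)=\infty$. The paper sidesteps this issue entirely: since $p_{t/2}(y,\cdot)\leq C$ implies $p\log p \leq (\log C)\, p$ for \emph{all} values of $p\geq 0$, one obtains the pointwise bound $P_{t/2}\bigl(p_{t/2}(y,\cdot)\log p_{t/2}(y,\cdot)\bigr)(x) \leq (\log C)\, p_t(x,y)$, so no Fubini exchange is needed in this case. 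In the finite-mass case, the paper proceeds much as you do (with $(p\log p)_-\leq 1/e$ integrable), though it handles the integrated $\log$-term via concavity of $\log$ (comparing $\int p_t(x,z)\log p_{2t}(x,z)\,\d\m(z)$ to $\ent(p_t(x,\cdot))$) rather than your bound $\log a\leq a$ yielding $p_{3t/2}(x,y_0)$; both are valid.
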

\begin{proof}
	Applying the logarithmic Sobolev inequality \eqref{ineq:LSI} to $f=p_{t,y}\coloneqq p_{t}(\cdot,y)$ gives 
	\begin{equation}\label{eq:logSobolev}
		I_{2K}(t)\frac{|\nabla p_{2t,y}|^2}{p_{2t,y}}(x)\leq P_{t} (p_{t,y}\log p_{t,y})(x)-p_{2t,y}\log p_{2t,y}(x),\quad \forall x\in M.
	\end{equation}
	It suffices to show $P_{t} (p_{t,y}\log p_{t,y})(x)\in L^1_y(\m)$.
	If so then denoting by $F(x,y)$ the right-hand side of \eqref{eq:logSobolev}, which is integrable in $y$ together with \cref{prop:ME}, one has by the Cauchy--Schwarz inequality that
	\begin{align}
		\int_M |\nabla_x p_{2t}(x,y)|\d \m(y)&\lesssim_t \int_M p_{2t}(x,y)^{1/2}f(x,y)^{1/2}\d \m(y)\\
		&\leq \left(\int_M p_{2t}(x,\cdot)\d\m\right)^{1/2}\left(\int_M f(x,\cdot)\d\m\right)^{1/2}<\infty.
	\end{align}
	\smallskip
	
	Case 1: the heat flow is ultracontractive.
	
	Then the heat flow is bounded from $L^2$ to $L^\infty$, yielding all heat kernels to be bounded.
	Thus
	\begin{align}
		P_{t} (p_{t,y}\log p_{t,y})(x)\leq \log \|p_{t,y}\|_{L^{\infty}}\int p_{t,x}(z)p_{t,y}(z)\d \m(z)\lesssim_t p_{2t}(x,y).
	\end{align}
	\smallskip
	Case 2: $\m$ has finite total mass.
	
	By Fubini's theorem, it suffices to show 
	\begin{equation}\label{eq:doubleintegral}
		\int p_{t}(x,z)\int |p_t(y,z)\log p_t(y,z)|\d\m(y)\d \m(z)<\infty.
	\end{equation}
	As $a\log a\geq- e^{-1}$ for all $a$, we have by the log-Harnack inequality \eqref{ineq:LHI} that
	\begin{align}
		\int |(p_t\log p_t)(y,z)|\d\m(y)&\leq \int 2/e+ (p_t\log p_t)(y,z)\d \m(y)\\
		&\leq \log p_{2t}(x,z)+\frac{d^2(x,z)}{4I_{2K}} +\frac{2}{e}\m(X)
	\end{align}
	Further, by the concavity of $a\mapsto \log a$, 
	\[
	\log p_{2t}(x,z)-	\log p_{t}(x,z)\leq \frac{1}{p_t(x,z)}(p_{2t}-p_t)(x,z)
	\]
	and integrating the above over $p_t(x,\d\m)$ obtains
	\begin{equation}
		\int	\log p_{2t}(x,z)p_t(x,\m(\d z))\leq \int \log p_t(x,z)p_t(x,\m(\d z))+\int (p_{2t}-p_t)(x,z)\d\m(z).
	\end{equation}
	Finally, combining above estimates with the moment-entropy estimate for heat kernels from \cref{prop:ME}, the double integrable \eqref{eq:doubleintegral} is finite. This completes the proof.
\end{proof}
\subsection{A characterization of Euclidean spaces}
We have seen in \cref{prop:Eucleadian} that weighted Euclidean spaces with a quadratic weight function $f$ satisfy sharp Wasserstein contraction at every pair of points. 
In this subsection, we show that the converse is also true by iteratively applying the splitting theorem \cref{thm:sharpness}.

We also remark that a manifold with non-negative constant weighted Ricci curvature has to satisfy \cref{asm:graHK}.
Therefore, the assumption \cref{asm:graHK} can be removed when sharp Wasserstein contraction holds for all pairs of points.
\begin{lemma}\label{lemma:Soliton}
	Let $(M,g,\m)$ be a weighted manifold with $\Ric_\m\equiv K$ for some $K\geq 0$. Then it satisfies \cref{asm:graHK}.
\end{lemma}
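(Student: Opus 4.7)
The statement requires showing that a weighted manifold with $\Ric_\m\equiv K$ for some $K\geq 0$ satisfies one of the two alternatives in \cref{asm:graHK}. I would split the argument into the cases $K>0$ and $K=0$, which have rather different characters.

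In the case $K>0$, the plan is to prove $\m(M)<\infty$, which realises the second alternative of \cref{asm:graHK}. The tensorial identity $\Ric_g+\nabla^2 f\equiv Kg$ identifies $(M,g,\m)$ as a gradient Ricci soliton of shrinking type in the Bakry-\'Emery sense. By the Wei-Wylie weighted mean-curvature comparison applied to $r(\cdot)=d(x_0,\cdot)$ for a fixed basepoint $x_0$, combined with a Cao-Zhou-type quadratic lower bound $f(x)\geq c_1\,d(x_0,x)^2-c_2$ valid for $d(x_0,x)$ sufficiently large, the density $e^{-f}$ decays at least as a Gaussian in the distance. Integrating over concentric annuli then yields $\m(M)<\infty$.

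In the case $K=0$, I would instead prove that the heat flow is ultracontractive. This is the harder direction, since the generic $\rcd(0,\infty)$ condition does not imply ultracontractivity, so one must genuinely exploit the \emph{equality} $\Ric_\m\equiv 0$ rather than merely the lower bound. The plan is to combine the sharp Bakry-\'Emery commutation $|\nabla P_th|^2\leq P_t(|\nabla h|^2)$ with the $K=0$ specialisation of the logarithmic Sobolev inequality \eqref{ineq:LSI}, which becomes $P_t(h\log h)-P_t h\log P_t h\geq t|\nabla P_th|^2/P_th$, and with the log-Harnack inequality \eqref{ineq:LHI}. Testing these identities against $h=p_t(x,\cdot)$ and combining with the entropy-moment bounds from \cref{prop:ME} would extract a uniform on-diagonal estimate $\sup_{x}p_{2t}(x,x)\leq C(t)$, equivalent to ultracontractivity.

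The main obstacle lies precisely in the $K=0$ case. Without a finite-dimensional refinement of the curvature bound, the standard Nash-inequality route to ultracontractivity is unavailable, and extra heat-kernel regularity must be extracted purely from the rigidity of the equality $\Ric_\m\equiv 0$. Should the direct analytic argument require further structural input, a complementary strategy is to argue by induction on a splitting: since the equality case is typically very restrictive, one may hope that $\Ric_\m\equiv 0$ forces $(M,g,\m)$ to split off a Euclidean factor with affine weight, times a weighted manifold again satisfying $\Ric_\n\equiv 0$, reducing eventually to a weighted Euclidean space with heat kernel given explicitly by Mehler's formula \eqref{eq:Mehler} and manifestly bounded.
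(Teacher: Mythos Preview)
For $K>0$ your plan works but is harder than necessary: you invoke the Cao--Zhou quadratic growth for shrinking solitons, which uses the full tensorial identity $\Ric_g+\nabla^2 f=Kg$. The paper simply observes that any $\cd(K,\infty)$ space with $K>0$ has finite total mass (Sturm \cite[Theorem 4.26]{sturm2006--1}); only the lower bound $\Ric_\m\geq K$ is needed, and the second alternative of \cref{asm:graHK} follows immediately.

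For $K=0$ there is a genuine gap. Your primary plan---extracting an on-diagonal bound $\sup_x p_{2t}(x,x)\leq C(t)$ from the dimension-free log-Sobolev and log-Harnack inequalities together with \cref{prop:ME}---does not go through: those inequalities hold on any $\rcd(0,\infty)$ space, yet ultracontractivity fails in general in infinite dimension (e.g.\ the infinite-dimensional Gaussian). The inequalities you list never see the \emph{equality} $\Ric_\m\equiv 0$, so they cannot by themselves produce the missing dimensional information. Your fallback---arguing that $\Ric_\m\equiv 0$ forces a splitting and then inducting down to a weighted Euclidean space---also fails: the condition $\Ric_g+\nabla^2 f=0$ is precisely the equation of a gradient \emph{steady} Ricci soliton, and non-trivial steady solitons such as Hamilton's cigar or the Bryant soliton do not split off any Euclidean factor.

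The paper's argument does recognise $(M,g,f)$ as a gradient steady soliton, but exploits this differently: a classical soliton identity (see e.g.\ \cite[Section 2]{Munteanu-Wang11}) gives $|\nabla f|^2\leq C$ globally. Then for any $N$ larger than the topological dimension $n$,
\[
\Ric_N=\Ric_\m-\frac{1}{N-n}\,\nabla f\otimes\nabla f\geq -\frac{C}{N-n},
\]
so $(M,g,\m)$ satisfies $\rcd(K',N)$ for some finite $K'\in\R$ and $N<\infty$, and ultracontractivity follows from the standard finite-dimensional theory. The step you are missing is precisely this passage from the steady-soliton equation to a bound on $|\nabla f|$, which is what upgrades the $\rcd(0,\infty)$ condition to a finite-dimensional one.
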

\begin{proof}
	The statement is clear for $K>0$ as any $\cd(K,\infty)$ space has finite mass; see \cite[Theorem 4.26]{sturm2006--1}.
	
	The assertion for $K=0$ follows from a classic result on Ricci soliton.
	Indeed, if the weighted Ricci curvature associated with \( \m = e^{-f} \mathrm{vol}_g \) is everywhere $0$, then the triple \( (M, g, f) \), with \( f \) serving as the potential function, is a gradient steady Ricci soliton.  
	Then it is known that the potential $f$ has linear growth i.e. there exists $C>0$ s.t. $|\nabla f|^2\leq C$ on $M$; see e.g. \cite[Section 2]{Munteanu-Wang11}.
	
	In particular, for any $N$ larger than the topological dimension $n$ of $(M,g)$, the $N$-Bakry-\'Emery curvature has lower bound
	\[
	\Ric_N\coloneqq \Ric_\m -\frac{1}{N-n}\nabla f\otimes \nabla f\geq -\frac{C}{N-n}.
	\]
	In other words, the mms $(M,g,\m)$ satisfies $\rcd(K,N)$ with $K=-\frac{C}{N-n}$, and thus the heat flow has to be ultracontractive.
\end{proof}

\begin{theorem}\label{thm:rigidity}
	Let $(M,g,\m)$ be a complete weighted Riemannian manifold with $\Ric_\m\geq K$.
	Then $(M,g,\m)$ is isomorphic to the weighted Euclidean space
	\[
	(\R^n,|\cdot|,e^{-f}\mathcal{L}^n),\quad \nabla^2f \equiv K,
	\]
	if and only if for all $x,y\in M$ equality in \eqref{eq:WassersteinK} holds for some $t>0$.
	Moreover, the same conclusion holds for \( K < 0 \) provided that the heat flow on \( (M, g, \m) \) is ultracontractive.
\end{theorem}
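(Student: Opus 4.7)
The \emph{only if} direction is the content of \cref{prop:Eucleadian}. For the \emph{if} direction, the plan is to iterate the splitting theorem \cref{thm:sharpness} and exhaust the dimension of $M$.

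First one verifies that \cref{asm:graHK} holds on $(M,g,\m)$, so that \cref{thm:sharpness} is applicable. For $K>0$ this is immediate because any $\cd(K,\infty)$ space has finite total mass; for $K<0$ the ultracontractivity is part of the hypothesis. The case $K=0$ is the delicate one: here one first argues that the hypothesis of sharp contraction at every pair forces $\Ric_\m\equiv 0$, so that $(M,g,f)$ is a steady gradient Ricci soliton, whereupon \cref{lemma:Soliton} yields the ultracontractivity.

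Next, apply \cref{thm:sharpness} to any pair $x\neq y$, producing a splitting
\[
(M,g,\m) \;=\; (\R,|\cdot|,e^{-f_1}\mathcal{L}^1)\times(N,g_N,\n),\qquad f_1''\equiv K,\quad \Ric_\n\geq K.
\]
The inductive step is to transfer the hypothesis to $N$. Given $y_0\neq y_1\in N$, consider the points $(0,y_0),(0,y_1)\in M$. By tensorization of the heat flow,
\[
p_t^M\bigl((0,y_i),(a,z)\bigr)\;=\;p_t^{\R}(0,a)\,p_t^N(y_i,z),\qquad i=0,1,
\]
and combining the diagonal coupling of $p_t^{\R}(0,\cdot)$ with itself and an optimal coupling of $p_t^N(y_0,\cdot)$ with $p_t^N(y_1,\cdot)$ gives the bound
\[
W_2\bigl(p_t^M((0,y_0),\cdot),p_t^M((0,y_1),\cdot)\bigr)\;\leq\;W_2\bigl(p_t^N(y_0,\cdot),p_t^N(y_1,\cdot)\bigr).
\]
Combined with sharp contraction on $M$ and the standard Wasserstein contraction on $N$, this forces equality in the $N$-contraction. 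Moreover \cref{asm:graHK} is inherited by $N$: finite mass tensorizes (since $\R^1_K$ has finite mass for $K>0$), ultracontractivity tensorizes (the heat kernel on $\R^1_K$ is bounded for $K<0$ by Mehler's formula, cf. \eqref{eq:Mehler}), and the $K=0$ case is treated just as for $M$.

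Iterating, each application strictly reduces the topological dimension of the base manifold by one. Since $\dim M=n<\infty$, the procedure terminates when the remaining factor is a point, at which stage $M$ has been realized as $(\R^n,|\cdot|,e^{-f}\mathcal{L}^n)$ with $f$ decomposing as a sum of one-variable functions each satisfying $f_i''\equiv K$. Consequently $\nabla^2 f\equiv K$, which is the desired conclusion. The main obstacle I expect is the $K=0$ case of verifying \cref{asm:graHK} a priori; my preferred route is to use the formal identity extracted from the proof of \cref{thm:sharpness}---namely, that sharp contraction along any geodesic forces $\Ric_\m(\nabla\phi)\equiv K|\nabla\phi|^2$ and $\nabla^2\phi\equiv 0$ for the associated Kantorovich potentials---and to exploit the abundance of sharp pairs to saturate the Bochner inequality in every tangent direction, yielding $\Ric_\m\equiv 0$ without circular invocation of \cref{thm:sharpness}.
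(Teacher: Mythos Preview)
Your overall architecture matches the paper's: establish \cref{asm:graHK}, invoke \cref{thm:sharpness}, transfer the sharp-contraction hypothesis to the factor $N$ via tensorization of heat kernels, and iterate until the dimension is exhausted. Your tensorization step and the inheritance of \cref{asm:graHK} by $N$ are argued more explicitly than in the paper, and correctly.

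The genuine gap is exactly where you flag it: the $K=0$ case of verifying \cref{asm:graHK}. Your proposed route---extracting the identity $\Ric_\m(\nabla\phi)\equiv K|\nabla\phi|^2$ from the proof of \cref{thm:sharpness}---is circular as written, because the rigorous Part~2 of that proof relies on \cref{lemma:nonsymintegral}, which itself requires \cref{asm:graHK}. Saying you will ``exploit the abundance of sharp pairs to saturate Bochner in every direction'' does not escape this: the integrability needed to pass from the formal Otto calculus to an honest pointwise conclusion is precisely what \cref{asm:graHK} buys you in the paper's argument.

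The paper sidesteps the circularity by a different mechanism. Sharp contraction at every pair forces the infinitesimal quantity
\[
\vartheta^+(x,y)\coloneqq -\liminf_{t\searrow 0}\frac{1}{t}\log\frac{W_2(p_t(x,\d\m),p_t(y,\d\m))}{d(x,y)}
\]
to equal $K$ identically, and then Sturm's result \cite[Theorem~3.1]{sturm2017remarks} (which on smooth weighted manifolds needs no synthetic finite-dimensionality, cf.\ \cite[Remark~4.3]{Erbar-Li-Schultz25}) yields $\Ric_\m\equiv K$ directly. With this in hand, \cref{lemma:Soliton} applies for $K\geq 0$ and \cref{asm:graHK} follows without ever touching \cref{thm:sharpness}. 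This short-time route is what you are missing; once you plug it in, the rest of your argument goes through as written.
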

\begin{proof}
	When equality in \eqref{eq:WassersteinK} is attained for all $x,y\in M$, the infinitesimal quantity $\theta$ introduced in \cite{sturm2017remarks} 
	\[
	\vartheta^+(x,y)\coloneqq -\liminf_{t\searrow 0}\frac1{t}\log\frac{W_2(p_t(x,\d\m),p_t(y,\d\m))}{d(x,y)}
	\]
	equals to $K $ everywhere on $M$.
	Then it follows from \cite[Theorem 3.1]{sturm2017remarks} that $\Ric_{\m}=\Ric+\nabla^2 f\equiv K$ on $M$ (see also \cite[Remark 4.3]{Erbar-Li-Schultz25} that on smooth weighted manifolds, the equality between the $\vartheta$ quantity and the weighted Ricci curvature does not rely on synthetic finite dimensionality).
	Then \cref{asm:graHK} is satisfied by \cref{lemma:Soliton}.
	Therefore, applying \cref{thm:sharpness} shows that there exists a weighted complete Riemannian manifold $(N,g_N,\n)$ satisfying \eqref{eq:splitting}. 
	
	By the tensorization of heat kernels, the weighted manifold $(N, g_N, \nu)$ also satisfies sharp Wasserstein contraction at each pair of points. Moreover, when $K < 0$, it inherits ultracontractivity as well.
   Therefore, iterating the splitting a finite number of times completes the proof.
\end{proof}

\appendix
\section{Proof of the entropy-moment estimate}\label{appendix}
\begin{proof}[Proof of \cref{prop:ME}]
	The volume growth property for any $\cd(K,\infty)$ space, see e.g. \cite{sturm2006--1}, ensures that there is a constant $\kappa>0$ (possibly depending on $x_0$) s.t. $\m(B(x_0,r))\leq \exp(\frac{\kappa}{2}r^2)$ for all $r>0$. This implies, see e.g. \cite[Section 4]{AGSInvention} that
	\begin{align}\label{ineq:volume}
		\int \exp(-\kappa\cdot d^2(x_0,z))\d \m(z)\leq 1.
	\end{align}
	For any $x\in X$ and $D>2$, by the Cauchy-Schwarz inequality, we have for each $t>0$ 
	\begin{align}
		&	W^4_2(p_t(x,\d \m),\delta_{x_0})=\left(\int d^2(x_0,z)p_t(x,z)\d \m(z)\right)^2\\
		& \leq \int d^4(x_0,z)\exp\left(-\frac{d^2(x_0,z)}{Dt}\right)\d \m(z)\cdot \int p^2_t(x,z)\exp\left(\frac{d^2(x_0,z)}{Dt}\right)\d \m(z).\label{ineq:proof2.6-1}
	\end{align}
	Applying \cref{thm:weightedInt} for some $C'>0$ depending only on $D$ and $K$, we have
	\begin{align}
		\int p^2_t(x,z)\exp\left(\frac{d^2(x_0,z)}{Dt}\right)\d \m(z)	&\leq \int p_t^2(x,z)\exp\left(\frac{2d^2(x,z)}{Dt}\right)\d \m(z)\cdot \exp\left(\frac{2d^2(x_0,x)}{Dt}\right)\\
		&\lesssim_{D}\frac{1}{\m(B(x,\sqrt{2t}))}\exp\left(\frac{2d^2(x_0,x)}{Dt}+C' t\right).\label{ineq:proof2.6-2}
	\end{align}
	1) We show first that for any $D>4$, there exist $C,t_0>0$ depending only on $x_0$ and $D$ s.t. for all $t\in(0,t_0]$
	\begin{align}
		W^2_2(p_t(x,\d \m),\delta_{x_0})&\leq C (\m(B(x,\sqrt{2t})))^{-\frac12}\exp\left(\frac{d^2(x_0,x)}{Dt}+Ct\right)\label{ineq:moment}\\
		\|(p_t\log p_t)(x,\cdot)\|_{L^1(X,\m)}&\leq e^{Ct}\left(\frac{1}{\m(B(x,\sqrt{2t}))}+\exp\left(\frac{2d^2(x_0,x)}{Dt}\right)\right)
	\end{align}
	Indeed, for all $t<\frac{1}{2D\kappa}$, there exists a constant $C''$ depending only on $D$ and $\kappa$ s.t.
	\[
	d^4\leq C''\exp\left(\frac{d^2}{2Dt}\right),\quad \forall d>0.
	\]
	Then by the choice of $t$ and \eqref{ineq:volume}
	\[
	\int d^4(x_0,z)\exp\left(-\frac{d^2(x_0,z)}{Dt}\right)\d \m(z)\lesssim_{D,x_0} \int \exp\left(-\frac{d^2(x_0,z)}{2Dt}\right)\d \m(z)\leq 1.
	\]
	Hence with \eqref{ineq:proof2.6-1} \eqref{ineq:proof2.6-2}, \eqref{ineq:moment} follows.
	
	Using the formula for Entropy under the change of reference measure, see e.g. \cite[(2.5)]{Ambrosio-Gigli-Mondino-Rajala}, we have 
	\begin{equation}
		\ent(p_t(x,\cdot))\geq -\kappa \int d^2(x_0,z)p_t(x,z)\d \m(z)=-\kappa W^2_2(p_t(x,\d\m),\delta_{{x_0}}).
	\end{equation}
	On $\{p_t(x,\cdot)\geq 1\}$, it follows again by \cref{thm:weightedInt} that
	\begin{equation}
		\int_{\{p_t(x,\cdot)\geq 1\}} p_{t}(x,z)\log p_t(x,z)\d \m(z)\leq \int p^2_{t}(x,z)\d \m(z)\lesssim_{D}\frac{e^{C't}}{\m(B(x,\sqrt{2t}))}.
	\end{equation}
	Therefore, combining above estimates gives
	\begin{align}
		\|p_t\log p_t(x,\cdot)\|_{L^1}&=2	\int_{\{p_t(x,\cdot)\geq 1\}} (p_{t}\log p_t)(x,\cdot)\d \m-\ent(p_t(x,\cdot))\\
		&\lesssim_{D,x_0}\frac{e^{C't}}{\m(B(x,\sqrt{2t}))}+\exp\left(\frac{2d^2(x_0,x)}{Dt}+C't\right).
	\end{align}
	2) The estimates for general $t$ follows from the regularizing properties of the heat flow being as gradient flows.
	Fix $t_0$ to be $\frac{1}{3D\kappa}$. 
	Applying directly Theorem 4.20 in \cite{AGSInvention} shows for all $t>t_0$ that
	\begin{align}
		W^2_2(p_t(x,\d \m),\delta_{x_0})&\leq \frac{e^{4\kappa (t-t_0)}}{\kappa}\left( \ent(p_{t_0}(x,\cdot))+2\kappa W^2_2(p_{t_0}(x,\d \m),\delta_{x_0})  \right)\label{ineq:19/03-1}\\
		\|(p_t\log p_t)(x,\cdot)\|_{L^1(X,\m)}&\leq\frac{e^{Ct}}{\m(B(x,\sqrt{2t}))}+e^{4\kappa (t-t_0)}\left( \ent(p_{t_0}(x,\cdot))+2\kappa W^2_2(p_{t_0}(x,\d \m),\delta_{x_0})  \right).
	\end{align}
	The estimate for entropy can be produced by the same argument in the previous step.
	The finiteness of the Fisher information follows from \eqref{eq:EVI}.
	
	3) When $(X,d)$ is locally compact, $(X,d)$ is proper by the Hopf-Rinow theorem.
	Thus for any $t>0$ and $U\subset X$ bounded, $\m(B(x,t))$ has a positive lower bound uniformly for all $x\in U$.
	Together with the previous quantitative estimates with upper bounds expressed by the volume of balls and $d(x,x_0)$, we conclude that all the considered functionals of heat kernels are locally bounded.
\end{proof}


\bibliographystyle{amsplain}
\bibliography{main.bib}

\providecommand{\bysame}{\leavevmode\hbox to3em{\hrulefill}\thinspace}
\providecommand{\MR}{\relax\ifhmode\unskip\space\fi MR }
\providecommand{\MRhref}[2]{%
  \href{http://www.ams.org/mathscinet-getitem?mr=#1}{#2}
}
\providecommand{\href}[2]{#2}
\begin{thebibliography}{10}

\bibitem{Cavalletti-McCann21}
Afiny Akdemir, Andrew Colinet, Robert~J. McCann, Fabio Cavalletti, and Flavia
  Santarcangelo, \emph{Independence of synthetic curvature dimension conditions
  on transport distance exponent}, Trans. Am. Math. Soc. \textbf{374} (2021),
  no.~8, 5877--5923 (English).

\bibitem{ABS19Gafa}
Luigi Ambrosio, Elia Bru{\'e}, and Daniele Semola, \emph{Rigidity of the
  1-{Bakry}-{\'e}mery inequality and sets of finite {Perimeter} in {RCD}
  spaces}, Geom. Funct. Anal. \textbf{29} (2019), no.~4, 949--1001 (English).

\bibitem{AmbrosioGigli2013}
Luigi Ambrosio and Nicola Gigli, \emph{A user's guide to optimal transport},
  Modelling and optimisation of flows on networks, Lecture Notes in Math., vol.
  2062, Springer, Heidelberg, 2013, pp.~1--155. \MR{3050280}

\bibitem{Ambrosio-Gigli-Mondino-Rajala}
Luigi Ambrosio, Nicola Gigli, Andrea Mondino, and Tapio Rajala,
  \emph{Riemannian {R}icci curvature lower bounds in metric measure spaces with
  {$\sigma$}-finite measure}, Trans. Amer. Math. Soc. \textbf{367} (2015),
  no.~7, 4661--4701. \MR{3335397}

\bibitem{AGS13RMI}
Luigi Ambrosio, Nicola Gigli, and Giuseppe Savar\'{e}, \emph{Density of
  {Lipschitz} functions and equivalence of weak gradients in metric measure
  spaces}, Rev. Mat. Iberoam. \textbf{29} (2013), no.~3, 969--996 (English).

\bibitem{AGSInvention}
\bysame, \emph{Calculus and heat flow in metric measure spaces and applications
  to spaces with {R}icci bounds from below}, Invent. Math. \textbf{195} (2014),
  no.~2, 289--391. \MR{3152751}

\bibitem{AGSRCD2014}
\bysame, \emph{Metric measure spaces with {R}iemannian {R}icci curvature
  bounded from below}, Duke Math. J. \textbf{163} (2014), no.~7, 1405--1490.
  \MR{3205729}

\bibitem{AGS15}
\bysame, \emph{Bakry-{\'e}mery curvature-dimension condition and {Riemannian}
  {Ricci} curvature bounds}, Ann. Probab. \textbf{43} (2015), no.~1, 339--404.

\bibitem{Ambrosio-Trevisan14}
Luigi Ambrosio and Dario Trevisan, \emph{Well-posedness of {Lagrangian} flows
  and continuity equations in metric measure spaces}, Anal. PDE \textbf{7}
  (2014), no.~5, 1179--1234 (English).

\bibitem{Bakry-Gentil-Ledoux14}
Dominique Bakry, Ivan Gentil, and Michel Ledoux, \emph{Analysis and geometry of
  {Markov} diffusion operators}, Grundlehren Math. Wiss., vol. 348, Cham:
  Springer, 2014.

\bibitem{Balogh12HJ}
Zolt{\'a}n~M. Balogh, Alexandre Engulatov, Lars Hunziker, and Outi~Elina
  Maasalo, \emph{Functional inequalities and {Hamilton}-{Jacobi} equations in
  geodesic spaces}, Potential Anal. \textbf{36} (2012), no.~2, 317--337
  (English).

\bibitem{Catino-Mantegazza-Lorenzo}
Giovanni Catino, Carlo Mantegazza, and Lorenzo Mazzieri, \emph{On the global
  structure of conformal gradient solitons with nonnegative {Ricci} tensor},
  Commun. Contemp. Math. \textbf{14} (2012), no.~6, 1250045, 12.

\bibitem{Cavalletti-Huesmann15}
Fabio Cavalletti and Martin Huesmann, \emph{Existence and uniqueness of optimal
  transport maps}, Ann. Inst. Henri Poincar{\'e}, Anal. Non Lin{\'e}aire
  \textbf{32} (2015), no.~6, 1367--1377 (English).

\bibitem{EKS15}
Matthias Erbar, Kazumasa Kuwada, and Karl-Theodor Sturm, \emph{On the
  equivalence of the entropic curvature-dimension condition and {Bochner}'s
  inequality on metric measure spaces}, Invent. Math. \textbf{201} (2015),
  no.~3, 993--1071.

\bibitem{Erbar-Li-Schultz25}
Matthias Erbar, Zhenhao Li, and Timo Schultz, \emph{Synthetic notions of
  {Ricci} flow for metric measure spaces}, Preprint, {arXiv}:2501.07175
  [math.{DG}] (2025), 2025.

\bibitem{Giglisplitting13}
Nicola Gigli, \emph{The splitting theorem in non-smooth context}, Preprint,
  {arXiv}:1302.5555 [math.{MG}] (2013), 2013.

\bibitem{Gigli14splitting}
\bysame, \emph{An overview of the proof of the splitting theorem in spaces with
  non-negative {Ricci} curvature}, Anal. Geom. Metr. Spaces \textbf{2} (2014),
  169--213 (English).

\bibitem{Gigli15MAMS}
\bysame, \emph{On the differential structure of metric measure spaces and
  applications}, Mem. Am. Math. Soc., vol. 1113, Providence, RI: American
  Mathematical Society (AMS), 2015.

\bibitem{Gigli-Pasqualetto-Book}
Nicola Gigli and Enrico Pasqualetto, \emph{Lectures on nonsmooth differential
  geometry}, SISSA Springer Ser., vol.~2, Cham: Springer, 2020.

\bibitem{Grigoryan09HK}
Alexander Grigor'yan, \emph{Heat kernel and analysis on manifolds}, AMS/IP
  Stud. Adv. Math., vol.~47, Providence, RI: American Mathematical Society
  (AMS); Somerville, MA: International Press, 2009.

\bibitem{Han-JMPA21}
Bang-Xian Han, \emph{Rigidity of some functional inequalities on {RCD} spaces},
  J. Math. Pures Appl. (9) \textbf{145} (2021), 163--203 (English).

\bibitem{Kuwadaduality}
Kazumasa Kuwada, \emph{Duality on gradient estimates and {Wasserstein}
  controls}, J. Funct. Anal. \textbf{258} (2010), no.~11, 3758--3774 (English).

\bibitem{LottVillani09}
John Lott and C\'{e}dric Villani, \emph{Ricci curvature for metric-measure
  spaces via optimal transport}, Ann. Math. (2) \textbf{169} (2009), no.~3,
  903--991.

\bibitem{Mccann2001}
Robert~J. McCann, \emph{Polar factorization of maps on {R}iemannian manifolds},
  Geom. Funct. Anal. \textbf{11} (2001), no.~3, 589--608. \MR{1844080}

\bibitem{Munteanu-Wang11}
Ovidiu Munteanu and Jiaping Wang, \emph{Smooth metric measure spaces with
  non-negative curvature}, Commun. Anal. Geom. \textbf{19} (2011), no.~3,
  451--486.

\bibitem{Otto-Villani00}
F.~Otto and C.~Villani, \emph{Generalization of an inequality by {Talagrand}
  and links with the logarithmic {Sobolev} inequality}, J. Funct. Anal.
  \textbf{173} (2000), no.~2, 361--400.

\bibitem{Rajala-Sturm14}
Tapio Rajala and Karl-Theodor Sturm, \emph{Non-branching geodesics and optimal
  maps in strong {$CD(K,\infty)$}-spaces}, Calc. Var. Partial Differential
  Equations \textbf{50} (2014), no.~3-4, 831--846. \MR{3216835}

\bibitem{Savare14}
Giuseppe Savar{\'e}, \emph{Self-improvement of the {Bakry}-{\'e}mery condition
  and {Wasserstein} contraction of the heat flow in {{\(\text{RCD}(K,
  \infty)\)}} metric measure spaces}, Discrete Contin. Dyn. Syst. \textbf{34}
  (2014), no.~4, 1641--1661.

\bibitem{sturm2006--1}
Karl-Theodor Sturm, \emph{On the geometry of metric measure spaces. {I}}, Acta
  Math. \textbf{196} (2006), no.~1, 65--131.

\bibitem{Sturm06-2}
\bysame, \emph{On the geometry of metric measure spaces. {II}}, Acta Math.
  \textbf{196} (2006), no.~1, 133--177.

\bibitem{sturm2017remarks}
\bysame, \emph{Remarks about synthetic upper {R}icci bounds for metric measure
  spaces}, Tohoku Math. J. (2) \textbf{73} (2021), no.~4, 539--564.
  \MR{4355059}

\bibitem{Tamanini-19HK}
Luca Tamanini, \emph{From {Harnack} inequality to heat kernel estimates on
  metric measure spaces and applications}, Preprint, {arXiv}:1907.07163
  [math.{PR}] (2019), 2019.

\bibitem{Villani}
C\'{e}dric Villani, \emph{Optimal transport}, Grundlehren der Mathematischen
  Wissenschaften [Fundamental Principles of Mathematical Sciences], vol. 338,
  Springer-Verlag, Berlin, 2009, Old and new. \MR{2459454}

\end{thebibliography}

\end{document}